\theoremstyle{plain}
 \newtheorem{thm}{Theorem}[section]
 \newtheorem{prop}{Proposition}[section]
 \newtheorem{cor}{Corollary}[section]
\theoremstyle{definition}
 \newtheorem{dfn}{Definition}[section]
\numberwithin{equation}{section}
\renewcommand{\leq}{\leqslant}
\renewcommand{\geq}{\geqslant}
\renewcommand{\setminus}{\smallsetminus}
\newfont{\TITf}{cmssdc10 scaled 1440}
\newcommand{\To}{\longrightarrow}
\title[An introduction to multiple gaps]{An introduction to multiple gaps}
\author[Antonio Avil\'{e}s]{Antonio Avil\'{e}s}
\begin{document}

\setcounter{page}{1}

\vspace*{40mm}

\thispagestyle{empty}

{
\TITf\setlength{\parskip}{\smallskipamount}

\begin{center}
Antonio Avil\'{e}s

\bigskip\bigskip\bigskip

{AN INTRODUCTION TO MULTIPLE GAPS}

\end{center}
}

\vspace{6em}{\leftskip3em\rightskip3em
\emph{Abstract}.
This is an introductory article to the theory of multiple gaps.

\bigskip\emph{Mathematics Subject Classification} (2010):  
Primary: 03E15, 28A05,
05D10; Secondary: 46B15


\bigskip\emph{Keywords}: multiple gap}

\newpage\thispagestyle{empty}

\maketitle
\tableofcontents

The aim of this article is to provide an introduction to the theory of multiple gaps, recently developed by Stevo Todorcevic and the author in a series of papers \cite{multiplegaps, stronggaps, IHES}, and in some unpublished material that can be found in \cite{analyticmultigaps}. This is not intended to be an exhaustive survey, we have just focused on a particular topic, mostly concerning analytic multiple gaps, and present some selected results that are often simplified, rather than in general form. We neither tried to give all the proofs, but just some arguments and ideas. The purpose is to have a reasonably light and easy to read note, from which one can get an idea of what the results and the techiques are, and serve as a motivation for the more interested reader to look for further details.\\

We also tried to assume as little background as possible, so that this becomes a suitable reading for non-experts and students, where they may get some basic ideas of topics like descriptive set-theory, Ramsey theory or Banach spaces of continuous functions, together with references where to learn about it.\\

A multiple gap is nothing else than finitely many families of subsets of a countable set, with some conditions about how these families are \emph{mixed}. This is a fairly elementary kind of object that could arise in a variety of contexts. The formal definition and some elementary and historical comments are found in Section~\ref{sectiondefinitions}. Although there are some brief considerations about cardinal $\aleph_1$ in that section, most of the paper is devoted to nicely definable objects: the analytic multiple gaps. In Section~\ref{sectioncritical}, which collects results from~\cite{multiplegaps, IHES}, we present some examples of gaps that we call \emph{critical} and we explain the unique importance of these particular examples in the general theory. Taking these critical gaps as a starting point, the theory is enriched by the intervention of Ramsey theory and what we call the \emph{first-move and record combinatorics} of the $n$-adic tree, explained in Sections~\ref{sectionstrong} and~\ref{sectionweak}, that present results from \cite{stronggaps} and \cite{IHES,analyticmultigaps} respectively. The rest of sections present applications of the theory in different settings, like the topology of $\beta\omega\setminus\omega$, sequences of vectors in Banach spaces and operators in $\ell_\infty/c_0$. Sections~\ref{sectioncountablyseparated} and \ref{sectionCK} present results from \cite{multiplegaps}, while the rest of sections explain ideas from~\cite{analyticmultigaps}.\\

Before getting into matter, just a warning about some notations. The set of natural numbers is denoted as $\omega = \{0,1,2,\ldots\}$, and each natural number is identified with its set of predecessors $n=\{0,1,2,\ldots,n-1\}$. In this way, $$\{x_n\}_{n\in \omega} = \{x_n\}_{n<\omega} = \{x_n : n<\omega\} =  \{x_0,x_1,x_2,\ldots\}$$
are alternative ways of denoting an infinite sequence, and
$$\{x_i\}_{i\in n} = \{x_i\}_{i<n} = \{x_i : i<n\} =  \{x_0,x_1,x_2,\ldots,x_{n-1}\}$$
are alternative ways of denoting a finite sequence of $n$ elements. Notice the set-theoretic tradition of starting counting from 0.\\

\section{General definitions}\label{sectiondefinitions}

Let $N$ be a countable set. We are interested in comparing subsets of $N$ modulo finite sets, so for $a,b\subset N$, we write $a\subset^\ast b$ if $a\setminus b = \{i\in a : i\not\in b\}$ is finite, that is, if all but finitely many elements of $a$ belong to $b$. We write $a=^\ast b$ if $a\subset^\ast b$ and $b\subset^\ast a$.

\begin{dfn}
Let $\{\Gamma_i : i<n\}$ be a finite collection of families of subsets of $N$. We say that these families are separated if we can find $a_0,\ldots,a_{n-1}$ subsets of $N$ such that
\begin{enumerate}
\item $x\subset^\ast a_i$ for each $x\in\Gamma_i$, 
\item $\bigcap_{i<n}a_i =^\ast \emptyset$.
\end{enumerate} 
\end{dfn}

Allowing finite errors is important here, otherwise we would be just saying that $\bigcap_{i<n}\bigcup\Gamma_i =\emptyset$.

\begin{dfn}
A finite family $\Gamma = \{\Gamma_i : i<n\}$ of families of subsets of $N$ is said to be an $n_\ast$-gap if
\begin{enumerate}
\item If we pick $x_i\in\Gamma_i$, then $\bigcap_{i<n}x_i = ^\ast\emptyset$ 
\item The families $\{\Gamma_i : i<n\}$ cannot be separated
\item Each family $\Gamma_i$ is infinitely hereditary: if $x\in\Gamma_i$ and $y$ is an infinite subset of $x$, then $y\in\Gamma_i$
\end{enumerate}
\end{dfn}

The tricky point is that, since we are allowing finite errors, taking $a_i = \bigcup \Gamma_i$ will not work to separate the families. The last requirement in the definition is just a harmless technical condition that will simplify the language for the kind of problems that we are going to discuss. Sometimes we may present examples of $n_\ast$-gaps where the families $\Gamma_i$ may not be hereditary; in that case, it is implicitly assumed that we close $\Gamma_i$ under subsets in order to fit in the above definition.\\

Two families of sets $I$ and $J$ are orthogonal if $x\cap y = ^\ast \emptyset$ whenever $x\in I$ and $y\in J$.   

\begin{dfn}
An $n$-gap is an $n_\ast$-gap $\Gamma$ in which moreover the families $\{\Gamma_i : i<n\}$ are pairwise orthogonal\footnote{The original definition of $n$-gap given in \cite{multiplegaps} is different from this one. It is given in an abstract Boolean algebra $\mathcal{B}$ -a degree of abstraction that we are not interested in here, we would only consider $\mathcal{B} =\mathcal{P}(\omega)/fin$- and required the families $\Gamma_i$ to be ideals (closed under finite unions) -a restriction that we do not want to make here-.}.
\end{dfn}

Although the general notion of $n_\ast$-gap may be useful at some points, we shall be concerned mainly with $n$-gaps. At this point, the sensible reader wants to see examples of $n$-gaps, to get an idea of how possibly one can get families which are on the one hand orthogonal but on the other hand cannot be separated. These examples are presented in Section~\ref{sectioncritical}, and the impatient reader can go straight to them. In the rest of this section we shall recall some basic facts and we shall discuss how the theory of $n$-gaps relates to the classical theory of gaps.\\

A 2-gap is what is traditionally called just a \emph{gap} by set-theorists. The above definition is the multidimensional generalization of it that was introduced in \cite{multiplegaps}, but 2-gaps are much older, going back to Hausdorff~\cite{Ha2}. First of all, why the name \emph{gap}? Well, if $\{\Gamma_0,\Gamma_1\}$ is a 2-gap according to the above definition, we can consider the family $\Gamma_1^\ast = \{N\setminus x: x\in\Gamma_1\}$. In that case, we have that $x\subset^\ast y$ for every $x\in\Gamma_0$ and $y\in\Gamma_1^\ast$ (because of mutual orthogonality) but there is no element $a$ such that $x\subset^\ast a \subset^\ast y$ for all $x\in\Gamma_0$, $y\in\Gamma_1^\ast$ (becase of non-separation). There is nothing in between, so there is a gap.   

The first observation is that we cannot produce gaps out of countable families:

\begin{prop}\label{separationofcountable}
If $I$ and $J$ are two orthogonal countable families of subsets of $N$, then they are separated.
\end{prop}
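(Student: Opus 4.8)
The plan is to enumerate $I = \{x_k : k<\omega\}$ and $J = \{y_k : k<\omega\}$ and build a single set $a$ such that $x \subset^\ast a$ for all $x\in I$ and $a \cap y =^\ast \emptyset$ for all $y\in J$; then taking $a_0 = a$ and $a_1 = N\setminus a$ separates the two families (with $\bigcap = a\cap(N\setminus a) = \emptyset$). Actually it is cleaner to build $a$ directly as an increasing union of finite approximations, using a standard diagonalization against the two enumerations.

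First I would fix a bijection of $N$ with $\omega$ so I can speak of "the first $m$ elements" of $N$. At stage $k$ I want to have committed a finite set $F_k \subset N$ to lie inside $a$, arranged so that $x_0,\dots,x_{k-1}$ are almost contained in $a$ and $y_0,\dots,y_{k-1}$ are almost disjoint from $a$. The key point that makes this possible is orthogonality: for each $j<k$, $x_j \cap y_k$ is finite, so $x_j \setminus (y_0\cup\cdots\cup y_k)$ differs from $x_j$ by a finite set. Hence at stage $k$ I can throw into $a$ all of $x_k$ except for the finitely many elements it shares with $y_0\cup\cdots\cup y_k$ (and except for finitely many small-index elements of $N$, to keep control), while never putting into $a$ any element of $y_0,\dots,y_k$ beyond what earlier stages already forced — and earlier stages only forced finitely much. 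Concretely, set $a = \bigcup_k \big(x_k \setminus (y_0\cup\cdots\cup y_k)\big)$, perhaps unioned with nothing else.

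Then I would verify the two required properties. For $x_j \in I$: $x_j \setminus a \subset x_j \cap (y_0\cup\cdots\cup y_j)$, which is finite by orthogonality applied $j+1$ times (a finite union of finite sets), so $x_j \subset^\ast a$. For $y_m \in J$: an element of $a\cap y_m$ must come from some $x_k\setminus(y_0\cup\cdots\cup y_k)$ with $k<m$ (for $k\ge m$ that term avoids $y_m$ entirely), so $a\cap y_m \subset \bigcup_{k<m} (x_k \cap y_m)$, again a finite union of finite sets by orthogonality, hence finite; thus $(N\setminus a)\cap y_m =^\ast y_m$, i.e. $y_m \subset^\ast N\setminus a$. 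Finally $a \cap (N\setminus a) = \emptyset$, so conditions (1) and (2) of separatedness hold with $a_0=a$, $a_1 = N\setminus a$.

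There is essentially no obstacle here; the only thing to be careful about is that the two "error" sets one discards at each stage are genuinely finite, and this is exactly what orthogonality buys — without the almost-disjointness hypothesis the sets $x_k\cap(y_0\cup\cdots\cup y_k)$ could be infinite and the construction would fail, which is consistent with the fact that genuine $2$-gaps (necessarily of size $\ge\aleph_1$, as will be seen) do exist.
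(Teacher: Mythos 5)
Your proof is correct and takes essentially the same route as the paper: both are diagonalizations against the two enumerations that exploit the finiteness of each $x_k\cap y_j$ (orthogonality) to discard a finite error at every stage. The only cosmetic difference is that the paper inductively makes finite modifications of the cumulative unions to obtain two disjoint separating sets, whereas you write one separating set in closed form, $a=\bigcup_k\bigl(x_k\setminus(y_0\cup\cdots\cup y_k)\bigr)$, and use its complement $N\setminus a$ for the other side.
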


\begin{proof}
Say that $I = \{a_n : n<\omega\}$ and $J=\{b_n : n<\omega\}$. By the considering the finite unions $\tilde{a}_n = \bigcup_{k<n}a_k$ and $\tilde{b}_n = \bigcup_{k<n}b_k$, we can suppose without loss of generality that  $a_1\subset a_2 \subset$ and $b_1\subset b_2\subset\cdots$. By orthogonality $a_i\cap b_j$ is finite for all $i,j$. Inductively on $n$, we can make finite modifications $a'_n$ and $b'_n$ of each $a_n$ and $b_n$, so that still $a'_1\subset a'_2 \subset$ and $b'_1\subset b'_2\subset\cdots$ and moreover $a'_n\cap b'_n = \emptyset$. Finally, the sets $a = \bigcup_n a'_n$ and $b=\bigcup_n b'_n$ separate $I$ and $J$.
\end{proof}

Going beyond countable cardinality, Hausdorff produced his famous gap:

\begin{thm}[Hausdorff]\label{Hausdorff} There exist two orthogonal families of subsets of $\omega$ of cardinality $\aleph_1$ which are not separated.
\end{thm}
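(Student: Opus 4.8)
The plan is to construct the two families by transfinite recursion of length $\omega_1$, building an $\omega_1$-indexed \emph{Hausdorff tower}: an increasing (modulo finite) sequence $\{x_\alpha : \alpha<\omega_1\}$ of infinite subsets of $\omega$, together with its pointwise complements $y_\alpha = \omega\setminus x_\alpha$, and then take $\Gamma_0$ to be (the infinite subsets of) $\{x_\alpha\}$ and $\Gamma_1$ to be (the infinite subsets of) $\{y_\alpha\}$. Orthogonality will be automatic provided the tower is constructed so that $x_\alpha \subset^\ast x_\beta$ for $\alpha<\beta$, since then $x_\alpha \cap y_\beta \subset^\ast x_\beta \cap y_\beta = \emptyset$. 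The real content is to arrange, simultaneously, that no single set $a$ can interpolate: there is no $a$ with $x_\alpha \subset^\ast a$ for all $\alpha$ and $a \cap y_\alpha =^\ast \emptyset$ for all $\alpha$, equivalently no $a$ with $x_\alpha \subset^\ast a \subset^\ast x_\beta$ is impossible to ``close up'' — precisely, the tower must not have an upper bound modulo finite that is also below all $x_\beta$, which for an increasing tower means it has no ``exact upper bound'' splitting it from its complements.

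The key device, due to Hausdorff, is to index the $x_\alpha$ not as subsets of $\omega$ directly but to first build them on a countable index set carrying extra structure, and to keep track of an auxiliary function. Concretely, I would maintain at stage $\alpha$ a one-to-one enumeration and, for each $n$, control the finite sets $x_\beta \cap n$ for $\beta < \alpha$; the standard trick is to associate to the tower the functions $f_\alpha : \omega \to \omega$ where $f_\alpha(n) = $ the $n$-th element of $x_\alpha$ (or a similar coding), arrange that $\alpha < \beta$ implies $f_\alpha \le^\ast f_\beta$, and ensure this $\le^\ast$-increasing $\omega_1$-sequence of functions has no $\le^\ast$-upper bound — an $(\omega_1,\omega_1)$-type object whose existence Hausdorff established outright in ZFC. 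The non-separation is then read off: an interpolating set $a$ would yield, via its increasing enumeration, a function dominating all $f_\alpha$ modulo finite (and being dominated appropriately on the complement side), contradicting unboundedness of the tower.

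The successor steps of the recursion are easy — given $x_\alpha$ just pick any infinite/co-infinite $x_{\alpha+1} \supset^\ast x_\alpha$ that is a ``genuine'' strict extension, for instance adding an infinite set disjoint from $x_\alpha$ while removing nothing. The heart of the argument, and the step I expect to be the main obstacle, is the limit stage: at a limit ordinal $\lambda$ of cofinality $\omega$ I must produce $x_\lambda$ with $x_\beta \subset^\ast x_\lambda$ for all $\beta<\lambda$ while preserving whatever bookkeeping guarantees eventual non-separation. Taking a cofinal $\omega$-sequence $\beta_0 < \beta_1 < \cdots \to \lambda$ and diagonalizing (a set $x_\lambda$ with $x_{\beta_k}\setminus x_\lambda \subseteq k$, say) gives the pseudo-union, but one must simultaneously \emph{not} accidentally create an interpolant; Hausdorff's resolution is precisely to carry the function $f_\lambda$ as an explicit diagonal dominating all earlier $f_\beta$ and to verify, at the very end, that the whole $\omega_1$-sequence $\{f_\alpha\}$ cannot be dominated — this last verification is a clean counting/diagonal argument of its own, showing that any candidate dominating function is dominated, somewhere, by cofinally many $f_\alpha$. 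Once the unbounded $\le^\ast$-increasing $\omega_1$-tower of functions is in hand, translating it back into the two orthogonal, non-separated families is routine, and infinite-hereditariness is imposed for free by passing to all infinite subsets, as the excerpt permits.
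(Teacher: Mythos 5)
Your outline has two genuine flaws, and the first is structural. If $\Gamma_0=\{x_\alpha:\alpha<\omega_1\}$ is a $\subset^\ast$-increasing tower and $\Gamma_1=\{y_\alpha\}$ with $y_\alpha=\omega\setminus x_\alpha$, then orthogonality requires $x_\alpha\cap y_\beta=x_\alpha\setminus x_\beta$ to be finite for \emph{all} pairs $\alpha,\beta$; your verification only covers $\alpha\leq\beta$. For $\beta<\alpha$ the set $x_\alpha\setminus x_\beta$ is infinite whenever the tower strictly increases, so the two families are not orthogonal (condition (1) of the gap definition fails as well), and if you force $x_\alpha=^\ast x_\beta$ for all pairs to restore orthogonality, the families are trivially separated by $x_0$ and $\omega\setminus x_0$. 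A Hausdorff gap cannot be a tower together with its literal complements: it must consist of \emph{two} $\subset^\ast$-increasing $\omega_1$-towers $\{a_\alpha\}$ and $\{b_\alpha\}$ with $a_\alpha\cap b_\beta$ finite for all $\alpha,\beta$ (equivalently, an increasing chain $a_\alpha$ and a decreasing chain $c_\alpha=\omega\setminus b_\alpha$ with $a_\alpha\subset^\ast c_\beta$ throughout), non-separation meaning there is no $a$ with $a_\alpha\subset^\ast a\subset^\ast c_\alpha$ for all $\alpha$. This is what the paper means by ``a special kind of $\omega_1$-chains in the order $\subset^\ast$''.

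The second flaw is the engine you invoke: a $\leq^\ast$-increasing $\omega_1$-sequence of functions with no $\leq^\ast$-upper bound is exactly the assertion $\mathfrak{b}=\aleph_1$, which is \emph{not} a theorem of ZFC (it fails under $MA_{\aleph_1}$, and the paper's own remark contrasting Hausdorff's theorem with $MA_{\aleph_1}$ is precisely the warning that non-separation cannot be reduced to unboundedness of a single tower or scale). Hausdorff's actual device is different: one carries through the recursion the condition that for every $\alpha$ and every $n<\omega$ the set $\{\beta<\alpha : a_\beta\cap b_\alpha\subseteq n\}$ is finite, and the whole difficulty at limit stages is to choose $a_\lambda,b_\lambda$ almost containing all earlier $a_\beta,b_\beta$ while preserving this condition. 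Non-separation then follows by a counting argument: if $a$ separated the families, then for uncountably many $\alpha$ one fixed $n$ would satisfy $a_\alpha\setminus a\subseteq n$ and $a\cap b_\alpha\subseteq n$, and picking such an $\alpha$ that is a limit of others with the same $n$ yields infinitely many $\beta<\alpha$ with $a_\beta\cap b_\alpha\subseteq n$, contradicting the condition. The paper gives no proof and refers to Fremlin's book for this construction; as written, your proposal neither produces orthogonal families nor rests on a principle available in ZFC, so the argument does not go through.
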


The two families are a special kind of $\omega_1$-chains in the order $\subset^\ast$. An exposition of this can be found in \cite{FremlinMA}. One remarkable point about Hausdorff's construction is that it does not require the Continuum Hypothesis or any other additional axioms, and that it works for the first uncountable cardinal $\aleph_1$. Compare it with the fact that, under the axiom $MA_{\aleph_1}$, two orthogonal sets of cardinalities $\aleph_0$ and $\aleph_1$ are always separated. Hausdorff's construction cannot be generalized to higher dimensions,

\begin{thm}\label{aleph1}
Under $MA_{\aleph_1}$, if $\{\Gamma_i : i<n\}$, $n>2$, are pairwise orthogonal families of subsets of $N$ of cardinality $\aleph_1$, then they are separated.
\end{thm}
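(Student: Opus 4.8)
The plan is to use $MA_{\aleph_1}$ to directly build the separating sets $a_0,\dots,a_{n-1}$ via a single ccc forcing notion whose generic filter encodes all of them at once. Fix $n>2$ and pairwise orthogonal families $\Gamma_0,\dots,\Gamma_{n-1}$, each of size $\aleph_1$. Since each $\Gamma_i$ is infinitely hereditary and we only care about sets modulo finite, we may pass to an enumeration $\Gamma_i=\{x^i_\alpha:\alpha<\omega_1\}$ and assume each $\Gamma_i$ is closed under finite unions (replace each element by the finite unions of an enumerating sequence, as in Proposition~\ref{separationofcountable}; this does not affect whether a separation exists). The conditions in the forcing $\mathbb{P}$ will be tuples $p=(s^p_0,\dots,s^p_{n-1};F^p)$ where each $s^p_i$ is a finite subset of $N$ (a finite approximation to $a_i$), $F^p$ is a finite subset of $\bigcup_i(\{i\}\times\omega_1)$ recording which sets $x^i_\alpha$ we have ``promised to eventually cover'', subject to the compatibility requirements that (i) $\bigcap_{i<n}s^p_i=\emptyset$ and (ii) for each $(i,\alpha)\in F^p$ and each $j\ne i$ and each $(j,\beta)\in F^p$, the finite set $x^i_\alpha\cap x^j_\beta$ (finite by orthogonality) is already contained in $s^p_i$ whenever it needs to be kept out of $s^p_j$ — more precisely, we demand $x^i_\alpha\cap s^p_j\subseteq s^p_i$ and symmetrically, so that future elements added to $s^p_i$ on behalf of $x^i_\alpha$ never collide with the $s^p_j$. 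Extension is: $q\le p$ iff $s^q_i\supseteq s^p_i$ for all $i$, $F^q\supseteq F^p$, and the new points added to $s^q_i$ outside $s^p_i$ all lie in $\bigcup\{x^i_\alpha:(i,\alpha)\in F^q\}$.

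The three things to check are the standard ones. First, \emph{density of the promise sets}: for each $i$ and $\alpha$, the set $D_{i,\alpha}=\{p:(i,\alpha)\in F^p\}$ is dense — given $p$, just add $(i,\alpha)$ to $F^p$, which is legal because $x^i_\alpha\cap s^p_j$ is a finite set disjoint from $\bigcup_i s^p_i\setminus s^p_i$ by the way conditions were set up, so we may first enlarge $s^p_i$ by the finitely many points $\bigcup_{j\ne i}(x^i_\alpha\cap s^p_j)$ without violating $\bigcap_i s_i=\emptyset$ (here is where $n>2$ is harmless: we are fixing one coordinate $i$ and the intersection stays empty as long as some coordinate misses a given point). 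Second, \emph{covering density}: for each $i$, $\alpha$, and $m\in x^i_\alpha$, the set $E_{i,\alpha,m}=\{p:(i,\alpha)\notin F^p \text{ or } m\in s^p_i\}$ is dense — below a condition with $(i,\alpha)\in F^p$, either $m\in s^p_i$ already, or adding $m$ to $s^p_i$ keeps $\bigcap_i s_i=\emptyset$ provided $m\notin s^p_j$ for some $j\ne i$; if $m\in s^p_j$ for all $j\ne i$ then $m\in x^i_\alpha\cap s^p_j$ which by the compatibility clause already forces $m\in s^p_i$. Meeting $D_{i,\alpha}$ then $E_{i,\alpha,m}$ for all $m$ gives, from a filter $G$ meeting all $\aleph_1$ of these dense sets, the sets $a_i=\bigcup\{s^p_i:p\in G\}$ with $x^i_\alpha\subseteq^\ast a_i$ for every $\alpha$ and $\bigcap_i a_i=\emptyset$.

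The main obstacle, and the only place real work is needed, is \emph{ccc}: given $\aleph_1$ conditions $\{p_\xi:\xi<\omega_1\}$, produce two that are compatible. Apply a $\Delta$-system lemma argument to the finite sets $s^{p_\xi}_0\cup\cdots\cup s^{p_\xi}_{n-1}$ to get a root $R$, and refine so that all $p_\xi$ agree on how they partition $R$ among the coordinates; then one checks that $p_\xi$ and $p_\eta$ have a common extension — their $s$-parts glue to $s_i=s^{p_\xi}_i\cup s^{p_\eta}_i$ (empty overall intersection since outside $R$ the supports are disjoint and on $R$ they agree), and one must add finitely many further points to repair the compatibility clause between promises $(i,\alpha)\in F^{p_\xi}$ and $(j,\beta)\in F^{p_\eta}$: namely throw $x^i_\alpha\cap s^{p_\eta}_j$ into $s_i$. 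The subtle point is that these repairs, ranging over all pairs of coordinates, must not collectively reintroduce a common point in all $n$ coordinates — and this is exactly where one uses that each repair touches only one coordinate $i$ at a time together with the agreement on the root, so that any point added lies outside some coordinate's support; this is the step where I expect to have to be careful with the bookkeeping, and it is also morally where the hypothesis $n>2$ (as opposed to $n=2$) is not needed — the argument works uniformly, the $n=2$ failure in Theorem~\ref{Hausdorff} coming from the fact that there one cannot appeal to $MA_{\aleph_1}$ since Hausdorff's gap is a ZFC object. A cleaner alternative to the whole scheme is to invoke a known preservation result that $MA_{\aleph_1}$ implies every ``$(\aleph_1,\aleph_1)$-gap-like'' configuration among analytic-free orthogonal families is separated, but I would prefer to give the self-contained forcing argument above.
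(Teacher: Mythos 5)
Your general strategy (force a separation with a ccc poset and apply $MA_{\aleph_1}$) is the right one, but the specific poset you build has two concrete problems, and both occur exactly at the points you flag as ``standard''. First, the density of $D_{i,\alpha}$ is false as stated: take $n=3$, pick any $m\in x^0_\alpha$, and consider the condition $p=(\emptyset,\{m\},\{m\};\emptyset)$, which satisfies (i) and (ii) vacuously. Any $q\le p$ with $(0,\alpha)\in F^q$ must, by your clause (ii), put $m\in x^0_\alpha\cap s^q_1$ into $s^q_0$, and then $m\in\bigcap_{i<3}s^q_i$, violating (i); so no extension of $p$ meets $D_{0,\alpha}$. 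Your parenthetical justification (``the intersection stays empty as long as some coordinate misses a given point'') assumes precisely what can fail. Second, and more seriously, the ccc claim is where all the content of the theorem lives, and your sketch cannot be right as stated, because you assert that the argument ``works uniformly'' in $n$ and that $n>2$ is not needed: $MA_{\aleph_1}$ is consistent and Hausdorff's gap is a ZFC object, so it exists in models of $MA_{\aleph_1}$; a uniform argument would then separate it, contradicting Theorem~\ref{Hausdorff}. (Your closing remark about a ``known preservation result'' fails for the same reason.) Concretely, the $\Delta$-system plus agreement on the root does not rule out the dangerous configurations: a point $m$ lying outside the root in one coordinate of $p_\xi$ and belonging to promised sets of $p_\eta$ for all the remaining coordinates (or lying in $n-1$ coordinates of $p_\eta$ and in one promised set of $p_\xi$) makes the two conditions outright incompatible, since clause (ii) then forces $m$ into every coordinate of any common extension. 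Ruling this out needs extra refinements and, crucially, the observation that for $n\ge 3$ such a point must be claimed by promised sets attached to at least two \emph{different} coordinates, whose intersection $x^i_\alpha\cap x^j_\beta$ ($i\ne j$) is finite by orthogonality, whereas for $n=2$ a single infinite promised set creates the danger (this is the Luzin/Hausdorff phenomenon). None of this appears in your sketch, and it is exactly the missing use of $n>2$.

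For comparison, the paper's proof uses a much simpler poset in which these issues do not arise: after closing each $\Gamma_i$ under finite unions, the conditions are tuples $(p_0,\ldots,p_{n-1})\in\Gamma_0\times\cdots\times\Gamma_{n-1}$ with $p_0\cap\cdots\cap p_{n-1}=\emptyset$, ordered by coordinatewise reverse inclusion; density for each $z\in\Gamma_i$ is immediate (adjoin $z$ minus the finite set $z\cap\bigcap_{j\ne i}p_j$ to the $i$-th coordinate), and the poset is $\sigma$-2-linked: there are countably many possible tuples of pairwise intersections $p_i\cap p_j$ (finite by orthogonality), and two conditions with the same frozen pairwise intersections are compatible, because a point in the intersection of the coordinatewise unions must miss some coordinate of each condition, and a third coordinate (this is where $n>2$ is used) places it in a frozen pairwise intersection, forcing it into the missed coordinate, a contradiction. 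If you want to salvage your finite-condition poset you would need to allow finite ``excused'' sets in the promises (to restore density of $D_{i,\alpha}$) and then carry out the ccc argument via the finiteness of $x^i_\alpha\cap x^j_\beta$ for $i\ne j$ as above; but the paper's poset gets the same conclusion with far less bookkeeping.
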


For readers familiar with Martin's axiom: The idea is that one tries to force a separation for $\{\Gamma_0,\Gamma_1,\Gamma_2\}$ by considering approximations $(p_0,p_1,p_2)\in\Gamma_0\times\Gamma_1\times\Gamma_2$ such that $p_0\cap p_1\cap p_2 = \emptyset$. One sees that such approximations form a ccc (actually $\sigma$-2-linked) poset, noticing that once $p_0\cap p_1$, $p_0\cap p_2$, $p_1\cap p_2$ are frozen, the conditions become compatible. Further details, as well as generalized forms of Theorem~\ref{aleph1} can be found in \cite[Section 6]{multiplegaps}.\\

While Theorem~\ref{aleph1} rules out a part of the theory of gaps to be extended to higher dimensions, there is a class of gaps which happens to florish into a beautiful and intricated theory in the multidimensional setting: the analytic gaps. A family $I$ of subsets of the countable set $N$ can be viewed as a subset $I\subset 2^N = \{0,1\}^N$, and the set $2^N$ can be endowed with the product topology which makes it homeomorphic to the Cantor set. In this way, we can measure the complexity of a family $I$ by looking at its topological complexity as a subset of $2^N$. Let us consider some examples before going further. The family
$$I = \left\{x\subset \omega : \sum_{n\in x}n^{-1}\leq 2\right\}$$
is a closed family of subsets of $\omega$. Namely, if $x\not\in I$, then there is a finite set $F\subset\omega$ such that $\sum_{n\in F}n^{-1}>2$, and then $\{y : y\supset F\}$ is a neighborhood of $x$ disjoint from $I$. On the other hand, the family
$$J = \left\{x\subset \omega : \sum_{n\in x}n^{-1}< +\infty\right\}$$
is not closed; it is a little bit more complicated: $J$ is an $F_\sigma$-family, the countable union of closed families
$$J = \bigcup_{m<\omega} \left\{x\subset \omega : \sum_{n\in x}n^{-1}\leq m\right\}.$$
We think of closed families as the simplest ones, and a family $I$ is considered more complex if a more complicated expression is needed to obtain $I$ from closed families. In this sense, $F_\sigma$ families have a low complexity. We say that $I\subset 2^N$ is analytic if there exist\footnote{$\omega^{<\omega}$ is the set of finite sequences of natural numbers. Given $x=(x_0,x_1,\ldots)\in\omega^{<\omega}$ and $n<\omega$, $x|n = (x_0,\ldots,x_{n-1})$.} $\{I_s : s\in\omega^{<\omega}\}$ closed families such that
$$ (\star)\  I = \bigcup_{x\in\omega^\omega} \bigcap_{n<\omega}I_{x|n}$$ 
The class of analytic sets includes closed sets and open sets, and is closed under countable unions, countable intersections and \emph{Souslin operations}  (meaning that if we have $\{I_s : s\in\omega^{<\omega}\}$ analytic sets, and $I$ is as in $(\star)$ above, then $I$ is analytic). As an intuitive idea, a family $I$ is analytic if it has a \emph{reasonably simple} definition that allows it to be expressed in terms of closed families as in $(\star)$. The reader who is not familiar with the notion of analytic set is encouraged to visit \cite{Kechris}.\\

An $n_\ast$-gap $\Gamma = \{\Gamma_i : i<n\}$ is analytic if each $\Gamma_i$ is an analytic family. A theory of analytic gaps was initiated in~\cite{Todorcevicgap}. The gap constructed by Hausdorff was not analytic (he used transfinite induction on $\omega_1$, and this technique gets us out of the \emph{reasonable definitions} of the analytic world). Todorcevic found that analytic gaps had extra structure, and in particular some of the properties of Hausdorff's gaps can never hold for analytic gaps. Consider the following variation on the notion of separation:

\begin{dfn}The families $\{\Gamma_i : i<n\}$ are said to be countably separated if there exists a contable set ${C}$ such that whenever we pick $x_i\in\Gamma_i$ for $i\in n$, there exist $c_0,\ldots,c_{n-1}\in {C}$ such that $x_i\subset c_i$ and $\bigcap c_i = \emptyset$. An $n_\ast$-gap $\Gamma = \{\Gamma_i : i<n\}$ is called strong if it is not countably separated.
\end{dfn}

Todorcevic formulated his results as two dichotomies, that indicate that there are two critical examples of analytic 2-gaps: one which is strong and another which is not. Every analytic 2-gap \emph{contains} the critical 2-gap which is not strong, and every analytic strong 2-gap \emph{contains} the critical strong 2-gap. These results are essentially the two-dimensional case of Theorem~\ref{criticalgap} and Theorem~\ref{criticalstronggap} that we discuss next.

\section{Critical analytic gaps}\label{sectioncritical}

Given $n^{<\omega}$, the $n$-adic tree is the set of all finite sequences of numbers from $\{0,\ldots,n-1\}$. 
$$n^{<\omega} = \{ (s_0,\ldots,s_k) : s_0,\ldots,s_k\in \{0,\ldots,n-1\}\}$$
The empty sequence is considered as an element of the $n$-adic tree.\\

 The concatenation of two elements $s=(s_0,\ldots,s_p), t=(t_0,\ldots,t_q)\in n^{<\omega}$ is defined as $s^\frown t = (s_0,\ldots,s_p,t_0,\ldots,t_q)$. Sometimes, in abuse of notation, we write $s^\frown i$ for $s^\frown (i)$ when $i\in n$.\\

We say that $s\leq t$ if there exists $r$ such that $t = s^\frown r$. This is a partial order relation on the $n$-adic tree. A chain in the $n$-adic tree refers to a chain in this order, a set $X\subset n^{<\omega}$ such that for each $t,s\in X$, either $t\leq s$ or $s\leq t$.

\begin{dfn} Let $0\leq i< n$ be natural numbers, and $X\subset n^{<\omega}$ be a subset of the $n$-adic tree.

\begin{enumerate}
\item We say that $X$ is an $i$-chain if we can write $X = \{x_0,x_1,\ldots\}$ in such a way that $x_k ^\frown i \leq x_{k+1}$ for every $k$.
\item We say that $X$ is an $[i]$-chain if we can write $X = \{x_0,x_1,\ldots\}$ in such a way that $x_{k+1} = x_k ^\frown i ^\frown w_k$ where $\max(w_k)\leq i$.\\
\end{enumerate}  
\end{dfn}

\textbf{The critical $n$-gap} is the $n$-gap $\mathcal{C}^{n} = \{\mathcal{C}^n_i : i<n\}$, where $\mathcal{C}^n_i$ is the collection of all $[i]$-chains of the $n$-adic tree.\\

\textbf{The critical strong $n$-gap} is the $n$-gap $\mathcal{S}^{n} = \{\mathcal{S}^n_i : i<n\}$, where $\mathcal{S}^n_i$ is the collection of all $i$-chains of the $n$-adic tree.\\

A proof that $\mathcal{C}^n$ is an $n$-gap can be found in \cite[Lemma 1.15]{analyticmultigaps} and a proof that $\mathcal{S}^n$ is a strong $n$-gap can be found in \cite[Theorem 6]{multiplegaps}. In any case, these proofs are not complicated, we give some hints from which the reader can derive these facts as an excercise:

\begin{itemize}
\item In order to see that $\mathcal{C}^n$ is not separated, check that if $x\subset^\ast a$ for all $x\in\mathcal{C}^n_i$, then for every $t\in n^{<\omega}$ there exists $s>t$ such that $s^\frown r\in a$ for all $r\in\{0,1,\ldots,i\}^{<\omega}$.
\item In order to see that $\mathcal{S}^n$ is not countably separated, suppose that $C$ is a countable family of sets that countably separates $\mathcal{S}$. Then, for evey $x\in n^\omega$ there exist $c_0,\ldots,c_{n-1}\in C$ and $m<\omega$ such that\footnote{If $x=(x_0,x_1,\ldots)\in n^\omega$ and $s\in n^{<\omega}$, $s<x$ means that there exists $k$ such that $s=(x_0,\ldots,x_{k-1})$.} $\bigcap_{i<n}c_i=\emptyset$ and $\{t\in n^{<\omega} : |t|>m, t^\frown i <x\}\subset c_i$ for all $i$. By a Baire category argument, there exists $r\in n^{<\omega}$ so that one can take the same $m$ and the same $c_i$'s for all $x>r$, and this leads to a contradiction. 
\end{itemize}

These are gaps of very low complexity, actually each family $\mathcal{C}^n_i$ and $\mathcal{S}^n_i$ is closed. We call these gaps critical because any other analytic (strong) $n$-gap must contain the critical example inside.

\begin{thm}\label{criticalstronggap}
If $\Gamma$ is a strong analytic $n_\ast$-gap, then there exists a one-to-one function $\phi:n^{<\omega}\To N$ such that\footnote{We use the notation $\phi(\mathcal{I}) = \{\phi(X) : X\in\mathcal{\mathcal{I}}\}$, where $\phi(X) = \{\phi(s) : s\in 2^{<\omega}\}$.} $\phi(\mathcal{S}^n_i)\subset \Gamma_i$ for all $i<n$.
\end{thm}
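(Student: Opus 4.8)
The plan is to construct $\phi$ by recursion along the levels of the $n$-adic tree, using the analytic presentations of the families $\Gamma_i$ to carry along finite membership witnesses, and to use the hypothesis that $\Gamma$ is strong precisely to guarantee --- through a rank argument --- that the recursion never gets stuck. This is the higher-dimensional version of Todorcevic's analytic gap dichotomy \cite{Todorcevicgap}, and the scheme below follows the classical well-founded-tree analysis that underlies dichotomies of this kind, adapted to the $n$-adic tree.

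First I would record a reformulation of the conclusion. Fix an enumeration of $N$ and, for each $i<n$, a tree $T_i$ on $2\times\omega$ (relative to that enumeration) such that $a\in\Gamma_i$ if and only if there is $\beta\in\omega^\omega$ with $(\chi_a\upharpoonright m,\,\beta\upharpoonright m)\in T_i$ for every $m<\omega$; this merely records that $\Gamma_i$ is analytic. For $t\in n^{<\omega}$ put $B^t_i=\{x<t : t(|x|)=i\}$, a finite $i$-chain, and for $\alpha\in n^\omega$ put $B^\alpha_i=\{x<\alpha : \alpha(|x|)=i\}=\bigcup\{B^t_i : t<\alpha\}$. One checks directly that $B^\alpha_i$ is an $i$-chain, that it is infinite exactly when $i$ occurs infinitely often in $\alpha$, and that every infinite $i$-chain $X$ is contained in $B^\alpha_i$ for its limit branch $\alpha=\bigcup X$. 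Since each $\Gamma_i$ is infinitely hereditary, the requirement that $\phi(\mathcal{S}^n_i)\subseteq\Gamma_i$ for all $i<n$ is then equivalent to: $\phi(B^\alpha_i)\in\Gamma_i$ for every $\alpha\in n^\omega$ and every $i$ occurring infinitely often in $\alpha$.

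Next I would set up the tree $\mathcal{A}$ of consistent finite approximations. A node of $\mathcal{A}$ of height $m$ is a pair $\sigma=(\phi_\sigma,(\gamma^\sigma_{t,i})_{|t|<m,\ i<n})$ with $\phi_\sigma\colon\{t : |t|<m\}\To N$ injective and $\gamma^\sigma_{t,i}\in\omega^{<\omega}$, subject to: (i) $(\chi_{\phi_\sigma(B^t_i)}\upharpoonright k_{t,i},\,\gamma^\sigma_{t,i})\in T_i$, where $k_{t,i}=|\gamma^\sigma_{t,i}|$; (ii) $\gamma^\sigma_{s,i}\subseteq\gamma^\sigma_{t,i}$ whenever $s\le t$, and $\gamma^\sigma_{s,i}\subsetneq\gamma^\sigma_{s^\frown i,i}$; and (iii) a freshness discipline in which each value $\phi_\sigma(t)$ is chosen beyond, in the enumeration of $N$, every witness length already present, so that along a branch the finite sets $\phi_\sigma(B^t_i)$ converge coordinatewise to $\phi(B^\alpha_i)$. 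Ordering $\mathcal{A}$ by restriction, it is a countable tree, and an infinite branch of $\mathcal{A}$ produces an injective $\phi=\bigcup_\sigma\phi_\sigma\colon n^{<\omega}\To N$ together with, for each $\alpha$ and each $i$ occurring infinitely often in $\alpha$, an element $\beta^{\alpha,i}=\bigcup\{\gamma^\sigma_{t,i} : t<\alpha\}\in\omega^\omega$ that, by (i)--(iii), witnesses $\phi(B^\alpha_i)\in\Gamma_i$. By the reformulation this is exactly the assertion of the theorem, so it suffices to prove that $\mathcal{A}$ is ill-founded.

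Finally, suppose towards a contradiction that $\mathcal{A}$ is well-founded, and let $\rho\colon\mathcal{A}\To\omega_1$ be its rank function. By recursion on $\rho(\sigma)$ I would attach to each node $\sigma$ a \emph{finite} family $C_\sigma$ of subsets of $N$: when $\sigma$ has no immediate successor, the failure of the one-step extension localizes at some top-level node $t$ of $\sigma$ and some colour $i$ for which $\phi_\sigma(B^t_i)\cup\{\phi_\sigma(t)\}$ has no legal prolongation in $T_i$ extending $\gamma^\sigma_{t,i}$, and from this local obstruction one extracts finitely many subsets of $N$; when $\sigma$ has (finitely many) immediate successors $\sigma'$, let $C_\sigma$ gather the $C_{\sigma'}$ together with the analogous sets read off from the pairs $(t,i)$ at the top level of $\sigma$. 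Then $C=\bigcup_{\sigma\in\mathcal{A}}C_\sigma$ is countable, and the crux of the argument --- and the step I expect to be the main obstacle --- is to verify that $C$ countably separates $\Gamma$: given $x_i\in\Gamma_i$ for $i<n$, one runs the natural attempt, driven by the approximations in $\mathcal{A}$, to cover each $x_i$ along the corresponding $i$-chain; because $\rho$ strictly decreases the attempt halts, and the node at which it halts must yield $c_0,\dots,c_{n-1}\in C$ with $x_i\subseteq c_i$ for every $i$ and $\bigcap_{i<n}c_i=\emptyset$, contradicting that $\Gamma$ is strong. Getting this right is what forces $\mathcal{A}$ to record witnesses for \emph{all} finite $i$-chains at once rather than just along one branch, and the freshness discipline to be coordinated across the $n$ colours, so that ``being stuck'' genuinely encodes a failure of countable separation and not an artefact of earlier choices; by comparison, the injectivity of $\phi$ and the convergence of the partial witnesses to genuine $\omega$-witnesses are routine bookkeeping handled by (iii) and (ii). Since $\Gamma$ admits no countable separation, $\mathcal{A}$ must be ill-founded, and the theorem follows.
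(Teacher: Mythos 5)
The paper itself gives no proof of this theorem (it is quoted from \cite[Theorem 7]{multiplegaps}), so your proposal has to be measured against that argument. Your preparatory work is fine: the reduction, via infinite hereditariness, from arbitrary $i$-chains to the branch-chains $B^\alpha_i$, the tree $\mathcal{A}$ of finite approximations carrying witness sequences $\gamma^\sigma_{t,i}$ for the tree representations $T_i$, the coherence condition along branches, and the freshness discipline are all correct, and an infinite branch of $\mathcal{A}$ does produce a $\phi$ as required.

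The genuine gap is the final step, which you yourself flag as ``the main obstacle'': nothing in the proposal actually shows that well-foundedness of $\mathcal{A}$ yields a countable separation. The assignment $\sigma\mapsto C_\sigma$ is never defined: a terminal node of $\mathcal{A}$ only records that no fresh point of $N$ and no witness extensions exist, and it is not explained how such a local obstruction produces sets $c_i$ that contain an \emph{arbitrary} prescribed $x_i\in\Gamma_i$ and have empty intersection; moreover a node of $\mathcal{A}$ has infinitely many immediate successors (all choices of new values of $\phi$ and of witness extensions), so your recursion clause ``gather the $C_{\sigma'}$'' does not yield finite families as claimed. The engine that makes this kind of argument work, both in Todorcevic's two-dimensional case \cite{Todorcevicgap} and in \cite{multiplegaps}, is a relativization/largeness lemma: for each $i$ one relativizes $\Gamma_i$ to a node of its Souslin scheme together with the finite set built so far, calls a configuration \emph{large} when the relativized families are not countably separated, and proves that countable separation is countably additive over the countably many one-step refinements, so that a large configuration always admits a large extension. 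With that lemma the embedding of $\mathcal{S}^n$ is built directly by recursion starting from the root, whose largeness is exactly the hypothesis that $\Gamma$ is strong; no rank or well-foundedness analysis is needed. Your plan to ``run the natural attempt, which halts because $\rho$ decreases'' cannot produce the required $c_0,\dots,c_{n-1}$ without precisely this $\sigma$-additivity lemma and without largeness being built into the nodes of $\mathcal{A}$; as written, the decisive step of the proof is missing.
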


\begin{thm}\label{criticalgap}
If $\Gamma$ is an analytic $n_\ast$-gap, then there exists a one-to-one function $\phi:n^{<\omega}\To N$ and a permutation $\varepsilon:n\To n$ such that $\phi(\mathcal{C}^n_i)\subset \Gamma_{\varepsilon(i)}$ for all $i<n$.
\end{thm}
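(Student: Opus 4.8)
The plan is to reduce Theorem~\ref{criticalgap} to a statement about the structure of analytic $n_\ast$-gaps that can be attacked by the standard machinery of analytic gap theory, namely a combination of a rank analysis (as in Todorcevic's treatment of the $2$-dimensional case) and a Ramsey-theoretic stabilization on trees. First I would observe that, since each $\Gamma_i$ is infinitely hereditary and analytic, by the classical Kechris--Louveau--Woodin type tree representation one may assume (after passing to a tree of finite approximations) that membership in $\Gamma_i$ is witnessed along branches, so that the failure of separation becomes a genuinely combinatorial statement: for no choice of $a_0,\dots,a_{n-1}$ do we have $x\subset^\ast a_i$ for $x\in\Gamma_i$ and $\bigcap a_i=^\ast\emptyset$. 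The goal is to build the embedding $\phi$ of the $n$-adic tree level by level, at each stage adding one node $\phi(s)$ for each $s\in n^{<\omega}$ of the current length, while maintaining an inductive hypothesis that the partial images of $[i]$-chains extend to members of $\Gamma_{\varepsilon(i)}$ for a permutation $\varepsilon$ that is itself determined during the construction.

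The heart of the argument is the following dichotomy at each node: given a node $s$ already placed, consider the relative gap obtained by looking at tails of members of the $\Gamma_i$'s that contain $\phi(s)$. Non-separation is inherited by at least one ``branch'' of possible continuations, and a counting/pigeonhole argument over the finitely many indices $i<n$ forces the existence of $n$ pairwise ``non-mixing'' continuations that can be taken as $\phi(s^\frown 0),\dots,\phi(s^\frown(n-1))$; the index $i$ whose family survives along the $i$-th continuation defines $\varepsilon(i)$. The appearance of the permutation $\varepsilon$, rather than the identity, is exactly because the ordering of the successors of $s$ in the $n$-adic tree — which carries the asymmetric condition $\max(w_k)\le i$ in the definition of $[i]$-chains — need not match the labelling induced by the gap; one fixes $\varepsilon$ once and for all at the root using a Ramsey argument (e.g. an application of the Galvin--Prikry or Nash-Williams theorem to colour finite trees by the pattern of which family is inherited) so that the same permutation works uniformly below every node. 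One then checks that an $[i]$-chain $X=\{x_0,x_1,\dots\}$ with $x_{k+1}=x_k{}^\frown i{}^\frown w_k$, $\max(w_k)\le i$, is mapped to a set whose tail lies in $\Gamma_{\varepsilon(i)}$ because at each step the construction guaranteed survival of the $\varepsilon(i)$-th family along the $i$-th successor and along all successors with smaller labels, which is precisely what the constraint $\max(w_k)\le i$ allows the chain to use.

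The main obstacle, as usual in this circle of ideas, is ensuring that the inductive construction does not run out of room: one must guarantee that at every node \emph{all} $n$ required continuations exist simultaneously and that non-separation genuinely passes to the sub-configuration, rather than merely to a single $\Gamma_i$. This is where the hypothesis $n_\ast$-gap (as opposed to just a single non-trivial family) is used essentially — the hereditariness plus the global non-separation condition, together with the fact that a countable union of separated configurations is separated (a fibered version of Proposition~\ref{separationofcountable}), yields a fusion argument: if at some node the dichotomy failed in all $n$ directions, one could piece together separators along a maximal antichain and separate the whole gap, a contradiction. I would organize this as a rank function on the tree of approximations measuring ``how separated'' a sub-configuration is, show the rank is either unbounded (giving separation, impossible) or the rank analysis stabilizes and produces the uniform branching needed for $\phi$. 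The permutation-invariance and the exact matching with the asymmetric $[i]$-chain condition is the delicate bookkeeping step, but conceptually it is forced; the genuinely hard analytic input is the rank/fusion argument showing the construction never gets stuck, and for this I would lean on the same Souslin-operation stability of analytic sets that underlies Todorcevic's original $2$-gap dichotomy, applied now to an $n$-fold product of trees.
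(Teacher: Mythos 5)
Your text is a plan rather than a proof, and the step on which everything hinges is exactly the one that is wrong as stated. You propose to guarantee that the inductive construction of $\phi$ never gets stuck by a fusion argument: if the node-dichotomy failed in all $n$ directions, you would ``piece together separators along a maximal antichain and separate the whole gap,'' invoking a ``fibered version of Proposition~\ref{separationofcountable}.'' But separation is not countably additive: if $N=\bigcup_k N_k$ and each restricted configuration $\{\Gamma_i|_{N_k} : i<n\}$ is separated, the whole family need not be, because the finite errors $x\cap N_k\setminus a_i^k$ accumulate over infinitely many pieces. Indeed the critical gap $\mathcal{C}^n$ itself is countably separated (the countable family $D$ displayed in Section~\ref{sectioncritical} witnesses this for each pair), yet it is a gap; and restricting $\mathcal{C}^2$ to the finite levels of $2^{<\omega}$ gives trivially separated pieces whose union is not separated. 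So gluing local separators cannot produce the contradiction you need, and the genuinely hard content of the theorem --- extracting, from the analytic (Souslin/tree) representations of the $\Gamma_i$ together with non-separation, a configuration rich enough to run the level-by-level construction --- is named in your sketch (``rank/fusion argument,'' ``Souslin-operation stability'') but never actually supplied. Note that the paper does not reproduce this argument either; it cites \cite[Theorem 1.14]{analyticmultigaps}, where the construction is carried out on the tree representations directly, and it is not a routine adaptation of the $2$-gap rank analysis.

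A second concrete gap is the treatment of the permutation $\varepsilon$ and of the asymmetric clause $\max(w_k)\le i$ in the definition of $[i]$-chains. You say $\varepsilon$ is ``fixed once and for all at the root using a Ramsey argument (e.g.\ Galvin--Prikry or Nash--Williams),'' but those theorems concern colourings of infinite subsets of $\omega$ and do not apply to the colourings of subsets of $n^{<\omega}$ you would need; the relevant partition calculus is the Milliken-type Theorem~\ref{weakRamsey}, and in the actual theory that theorem is applied \emph{after} Theorem~\ref{criticalgap} (to refine $\phi$ into gaps made of types), not to produce $\phi$. More importantly, the fact that an $[i]$-chain is allowed to use every move $\le i$ encodes a real asymmetry among the families --- for instance $\mathcal{C}^n_i$ is countably generated in $(\mathcal{C}^n_j)^\perp$ exactly when $i<j$ --- so $\varepsilon$ must arise from ordering the $\Gamma_{\varepsilon(i)}$'s by an invariant of this kind, and the inductive hypothesis must guarantee at $\phi(s)$ not just survival along the $i$-th successor but along all successors with labels $\le i$ simultaneously. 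Declaring that this bookkeeping is ``conceptually forced'' leaves precisely the part of the argument that distinguishes $\mathcal{C}^n$ from the symmetric gap $\mathcal{S}^n$ unproved.
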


Theorem~\ref{criticalgap} can be found in \cite[Theorem 1.14]{analyticmultigaps}, while Theorem~\ref{criticalstronggap} corresponds to \cite[Theorem 7]{multiplegaps}. The statement of \cite[Theorem 7]{multiplegaps} refers only to $n$-gaps, but the same proof works for $n_\ast$-gaps. The critical strong $n$-gap $\mathcal{S}^n$ is symmetric, in the sense that for every permutation $\varepsilon:n\To n$, the induced bijection $n^{<\omega}\To n^{<\omega}$ identifies $\mathcal{S}^n$ with $\{\mathcal{S}^n_{\varepsilon(i)} : i<n\}$. The critical $n$-gap $\mathcal{C}^n$ is, however, highly non-symmetric. For example, notice that $\mathcal{C}^n_0$ is just countably generated as every $[0]$-chain is contained in $\{s^\frown (0,\ldots,0)\}$ for some $s\in n^{<\omega}$ but $\mathcal{C}^n_i$ is not countably generated for $i>0$. Moreover, consider the following notions:

\begin{itemize}
\item We say that a family of sets $I$ is countably generated in another family $J$ if there exists a countable subfamily $D\subset J$ such that for every $x\in I$ there exists $y\in D$ such that $x\subset y$. 
\item If $I$ is a family of subsets of $N$, its orthogonal family is $I^\perp = \{y\subset N : \forall x \in I \ x\cap y=^\ast \emptyset\}$. 
\end{itemize}

The gap $\mathcal{C}^n$ is not invariant under any permutation because $\mathcal{C}^i_n$ is countably generated in $(\mathcal{C}^j_n)^\perp$ if and only if $i<j$. Indeed, if $i<j$, we have the countable family $$ D = \{ \{t^\frown r : r\in\{0,\ldots,j-1\}^{<\omega}\} : t \in n^{<\omega}\}$$
and on the other hand, it follows from Proposition~\ref{separationofcountable} that if two orthogonal families are not separated, then they cannot be countably generated in the orthogonal of each other.

When $n=2$, Theorem~\ref{criticalgap} can be stated more generally, as one side of the gap need not be analytic. This is \cite[Theorem 3]{Todorcevicgap}:

\begin{thm}\label{firstdichotomy}
Let $\Gamma = \{\Gamma_0,\Gamma_1\}$ is a gap such that $\Gamma_1$ is analytic and is not countably generated in $\Gamma_0^\perp$. Then there exists a one-to-one function $u:2^{<\omega}\To N$ such that $u(\mathcal{C}^n_i)\subset \Gamma_{i}$ for all $i=0,1$.
\end{thm}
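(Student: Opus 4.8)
The plan is to run the classical Todorcevic-style "tree of approximations" argument adapted to this two-dimensional situation. Write $\Gamma_1 = \bigcup_{x\in\omega^\omega}\bigcap_n K_{x|n}$ as a Souslin scheme with closed families $K_s\subset 2^N$, and assume WLOG that $K_{s^\frown i}\subset K_s$, that each $K_s$ is hereditary, and that $s\leq t$ implies $K_t\subseteq K_s$. The goal is to build the embedding $u:2^{<\omega}\To N$ by recursion on the levels of the binary tree so that: (i) every $[1]$-chain in the binary tree is sent into $\Gamma_1$ — for this one builds, along each infinite branch, a witnessing point $x\in\omega^\omega$ and checks that the image of the branch lies in $\bigcap_n K_{x|n}$ using that the $K_s$ are closed and hereditary; and (ii) every $[0]$-chain is sent into $\Gamma_0$ — recall a $[0]$-chain is just an eventual subset of some fixed "spine" $\{t^\frown(0,\dots,0)\}$, so it suffices to arrange that each such spine has its image in $\Gamma_0$, or rather in some member of a countable family we are trying to rule out.

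The engine of the construction is the hypothesis that $\Gamma_1$ is \emph{not} countably generated in $\Gamma_0^\perp$. The natural countable family to test against is the collection of ``candidate covers'' built from the approximations already constructed: at each finite stage we have a finite partial map and finitely many closed sets $K_s$ in play, and the sets of the form $\bigcup\{$ image of a potential spine through the current structure $\}$, together with finite unions of them, form a countable family $D\subset\Gamma_0^\perp$ (they are in $\Gamma_0^\perp$ precisely because the images of $[0]$-chains will be orthogonal, modulo finite sets, to everything in $\Gamma_0$ — this is where the gap structure, i.e. orthogonality of $\Gamma_0$ and $\Gamma_1$, must be used). Since $\Gamma_1$ is not countably generated in $\Gamma_0^\perp$, for every countable $D\subset\Gamma_0^\perp$ there is $z\in\Gamma_1$ not covered by any member of $D$; feeding such a $z$ (with its Souslin witness) into the recursion lets us always extend the approximation while keeping the images of the new $[1]$-chains inside the appropriate $K_s$'s and the images of the new $[0]$-spines still disjoint-mod-finite from the relevant pieces. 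One runs this recursion using a standard bookkeeping/fusion argument over all finite sequences $s\in 2^{<\omega}$ and all relevant ``tasks'' (one task per demand that a given $[1]$-branch hit $\Gamma_1$, one per demand that a $[0]$-spine avoid a given member of the enumerated family).

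The main obstacle — and the only genuinely delicate point — is the interaction between the two closure conditions: to get $[1]$-chains into $\Gamma_1$ we must commit, along each branch, to an increasing sequence $x|1 < x|2 < \cdots$ of nodes of $\omega^{<\omega}$ and keep the images landing in $K_{x|n}$ for all $n$, which is a closed (hence ``robust under finite extension'') condition; but to get $[0]$-spines out of the enumerated subfamily of $\Gamma_0^\perp$ we must, at each stage, choose the new elements of $N$ to lie outside a prescribed finite set, and these two requirements are imposed on overlapping portions of the tree. The resolution is the usual one: the closed sets $K_s$ being hereditary means that once a finite piece is inside $K_s$ we are free to add further points of $N$ anywhere we like without leaving $K_s$, so the ``avoid a finite set'' demands never conflict with the ``stay in $K_s$'' demands; one just has to order the bookkeeping so that the Souslin witness $z$ for a $[1]$-requirement is chosen \emph{after} the finitely many $[0]$-avoidance constraints active at that stage are known, which is possible exactly because ``not countably generated in $\Gamma_0^\perp$'' gives us infinitely much room. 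Assembling these observations, and checking that $u$ is injective (arrange it by always choosing fresh points of $N$), gives the theorem; for $n>2$ the analogous statement fails in this clean form, which is why Theorem~\ref{criticalgap} needs the permutation $\varepsilon$ and the full analyticity of all the $\Gamma_i$.
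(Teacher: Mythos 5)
Your sketch has two genuine gaps, and the second is located exactly at the point you yourself call the only delicate one. First, the $\Gamma_0$ side is never actually handled: the theorem requires the image of every spine $\{t^\frown(0,\ldots,0)\}$ to be a \emph{member} of $\Gamma_0$, but everything you impose on that side is an avoidance condition (``keep the spine images disjoint mod finite from the relevant pieces'', ``avoid a given member of the enumerated family''). Avoiding countably many sets can only certify non-membership in some family; it cannot certify membership in $\Gamma_0$. Membership has to be produced from somewhere, e.g.\ by showing that the set of points still available for a given spine is not in $\Gamma_0^\perp$, picking $x\in\Gamma_0$ meeting it in an infinite set, routing the spine inside that intersection, and invoking infinite hereditariness of $\Gamma_0$ --- and arranging this simultaneously for all spines, compatibly with the $\Gamma_1$-side commitments, is real work that the sketch omits. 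Relatedly, your justification that the ``candidate covers'' lie in $\Gamma_0^\perp$ (``because the images of the $[0]$-chains will be orthogonal to everything in $\Gamma_0$'') is circular and in fact contradicts the goal, since those images are supposed to be elements of $\Gamma_0$, not sets orthogonal to $\Gamma_0$.

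Second, your resolution of the interaction between the two requirements rests on a false claim: it is not true that for a closed hereditary $K\subset 2^N$, once a finite piece lies in $K$ one may ``add further points of $N$ anywhere we like without leaving $K$''. Take $K=\{z\subset N : z\subset a\}$ for a fixed $a\subset N$: this is closed and hereditary, yet adding any point outside $a$ leaves $K$. What closedness plus hereditariness actually gives is that membership of an infinite set in $K$ is decided by its finite subsets; consequently every new point placed on a branch is \emph{constrained} to be drawn from members of $\Gamma_1$ compatible with the Souslin commitments already made along that branch --- a restriction, not a freedom --- and reconciling these restrictions with the $\Gamma_0$-side demands and with injectivity is precisely the content of the construction, which therefore remains unproved here. (Also, ``WLOG each $K_s$ is hereditary'' needs an argument: replace $K_s$ by its downward closure, note this is closed, and use a decreasing scheme plus a compactness argument to check the resulting analytic family is still contained in $\Gamma_1$.) Note finally that the paper does not reprove this theorem at all: it quotes Todorcevic's result on analytic gaps and only supplies the explicit bijection $2^{<\omega}\To\omega^{<\omega}$ carrying $\mathcal{C}^2$ to the gap $\tilde{\mathcal{C}}^2$ of chains versus sets of immediate successors. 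So the two honest options are to reduce to that cited theorem, as the paper does, or to carry out the tree-of-approximations construction in full detail; as written, your proposal does neither.
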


The statement of \cite[Theorem 3]{Todorcevicgap} is a little different from Theorem~\ref{firstdichotomy}. To see that they are equivalent, it is enough to notice that the bijection $f:2^{<\omega}\To \omega^{<\omega}$ given by $g(\emptyset) = \emptyset$, $g(t^\frown 1) = g(t)^\frown 0$ and $g((t_1,\ldots,t_k)^\frown 0) = (t_1,\ldots,t_k+1)$ transforms the gap $\mathcal{C}^2$ into the gap $\tilde{\mathcal{C}}^2$, where $\tilde{\mathcal{C}}^2_0$ consists of the sets which are contained in the set of immediate successors of some $s\in\omega^{<\omega}$, and $\tilde{\mathcal{C}}^2_1$ consists of all chains of $\omega^{<\omega}$.\\

The critical strong 2-gap $\mathcal{S}^2$ is related to the phenomenon of so-called Luzin gaps. Suppose that you have families $\{a_x : x\in X\}$ and $\{b_x : x\in X\}$ of subsets of $\omega$ indicated in some uncountable set $X$ such that

\begin{enumerate}
\item $a_x\cap b_x = \emptyset$ for all $x$,
\item $0<|(a_x\cap b_y)\cup (a_y\cap b_x)| < \omega$ for all $x\neq y$.
\end{enumerate}

Then one can check that $\{a_x : x\in X\}$ and $\{b_x : x\in X\}$ form a strong 2-gap, cf. \cite[p. 56]{Todorcevicgap}. Gaps of this form are called Luzin gaps. The gap $\mathcal{S}^2$ is like that, as we can take $X = 2^\omega$, $a_x = \{s\in x : s^\frown 0< x\}$, $b_x = \{s\in x : s^\frown 1< x\}$. It is indeed a \emph{perfect Luzin gap} since it is parametrized by the Cantor set in a continuous way. \cite[Theorem 2]{Todorcevicgap} asserts that every analytic strong 2-gap contains a perfect Luzin gap, and therefore corresponds to the case $n=2$ of Theorem~\ref{criticalstronggap} above. A non-definable version of this result holds under the Open Coloring Axiom OCA (cf.~\cite{MoorePFA} for information on this axiom).

\begin{thm}[OCA]\label{OCA}
If $\{\Gamma_0,\Gamma_1\}$ is a strong 2-gap, then there exists a Luzin gap $\{a_x, b_x : x\in X\}$ such that $a_x\in \Gamma_0$, $b_x\in \Gamma_1$ for all $x\in X$.
\end{thm}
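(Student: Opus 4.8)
\textbf{Proof proposal for Theorem~\ref{OCA}.}

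The plan is to run the standard Open Coloring Axiom machinery on a second-countable space built from the two sides of the gap. Recall that OCA asserts: for a separable metric space $Z$ and a partition $[Z]^2 = K_0 \cup K_1$ with $K_0$ open (in the natural topology on unordered pairs), either $Z$ has an uncountable $K_0$-homogeneous subset, or $Z$ is a countable union of $K_1$-homogeneous sets. So the first step is to choose the right space and the right open colouring. Since $\{\Gamma_0,\Gamma_1\}$ is a strong $2$-gap, for each pair $(x_0,x_1)\in\Gamma_0\times\Gamma_1$ we may fix, using the infinitely hereditary hypothesis and a bijection $N\cong\omega$, a pair of \emph{disjoint} representatives (replace $x_i$ by an infinite subset so that $x_0\cap x_1=\emptyset$); this is harmless because both families are infinitely hereditary and non-separation/non-countable-separation are unaffected by passing to infinite subsets. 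Let $Z$ be the set of these selected disjoint pairs $(x_0,x_1)$, topologised as a subspace of $2^N\times 2^N$ (a compact metric space), so $Z$ is separable metric. Colour an unordered pair $\{(x_0,x_1),(y_0,y_1)\}$ by $K_0$ if $(x_0\cap y_1)\cup(y_0\cap x_1)\neq\emptyset$, i.e. the two disjoint partitions ``cross'' somewhere, and by $K_1$ otherwise. The set $K_0$ is open: the crossing is witnessed by a single coordinate $m\in N$ lying in $x_0\cap y_1$ (or in $y_0\cap x_1$), and membership in a fixed coordinate is a clopen condition in $2^N$.

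Next I would apply OCA to dispose of the $K_1$ alternative. Suppose $Z=\bigcup_{k<\omega}Z_k$ with each $Z_k$ being $K_1$-homogeneous. For a fixed $k$, any two pairs $(x_0,x_1),(y_0,y_1)\in Z_k$ satisfy $x_0\cap y_1=\emptyset=y_0\cap x_1$. I claim this forces the two families $\{x_0:(x_0,x_1)\in Z_k\}$ and $\{x_1:(x_0,x_1)\in Z_k\}$ to be ``aligned'' by a single pair of sets: put $a_k=\bigcup\{x_0:(x_0,x_1)\in Z_k\}$ and note that $a_k$ is disjoint from every $x_1$ appearing in $Z_k$; so the countable family $C=\{a_k, N\setminus a_k:k<\omega\}$ would countably separate all pairs in $\bigcup_k(\Gamma_0\!\restriction Z_k)\times(\Gamma_1\!\restriction Z_k)$. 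The point needing care here is that $Z$ was built by \emph{choosing} one disjoint pair of representatives per $(x_0,x_1)$, so I must check that the collection of chosen representatives is cofinal under $\subset$ in the original families — equivalently, choose representatives so that every $x\in\Gamma_0$ is a subset of some chosen $x_0'$, and likewise for $\Gamma_1$; using that $\Gamma_0,\Gamma_1$ are orthogonal this can be arranged (for each $x\in\Gamma_0$ pick any $y\in\Gamma_1$, thin both to disjoint infinite subsets $x',y'$, and enlarge $x'$ back up to an infinite subset of $x$ still disjoint from $y'$; since infinitely hereditary families contain all infinite subsets, $x'\in\Gamma_0$ and $x\supseteq x'$ — wait, we need $x\subseteq$ something chosen, so instead index $Z$ by \emph{all} disjoint pairs $(x',y')$ with $x'\in\Gamma_0$, $y'\in\Gamma_1$, $x'\cap y'=\emptyset$, which by infinite-hereditariness is cofinal in $\Gamma_0\times\Gamma_1$). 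With $Z$ taken to be this full set of disjoint pairs, a $K_1$-decomposition yields a countable separating family $C$ as above, contradicting that the gap is strong. So the $K_0$ alternative must hold.

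Finally I would extract the Luzin gap from an uncountable $K_0$-homogeneous set $H\subseteq Z$. Write $H=\{(a_x,b_x):x\in X\}$ with $|X|=\aleph_1$. By construction $a_x\cap b_x=\emptyset$ for all $x$ (representatives were chosen disjoint), and $a_x\in\Gamma_0$, $b_x\in\Gamma_1$. $K_0$-homogeneity says precisely that for $x\neq y$, $(a_x\cap b_y)\cup(a_y\cap b_x)\neq\emptyset$, which is the positivity condition $0<|(a_x\cap b_y)\cup(a_y\cap b_x)|$. It remains to arrange finiteness, $|(a_x\cap b_y)\cup(a_y\cap b_x)|<\omega$: this is \emph{automatic} from orthogonality of $\Gamma_0$ and $\Gamma_1$, since $a_x\in\Gamma_0$ and $b_y\in\Gamma_1$ gives $a_x\cap b_y=^\ast\emptyset$, hence finite, and symmetrically $a_y\cap b_x$ is finite. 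Thus $\{a_x,b_x:x\in X\}$ satisfies conditions (1) and (2) in the definition of a Luzin gap, with the correct sides, and we are done. The main obstacle, as flagged above, is the bookkeeping around the choice of disjoint representatives: one must set up $Z$ so that (a) it is genuinely separable metric, (b) the $K_1$-decomposition really does produce a \emph{countable} separating family for the \emph{original} gap (not merely for the thinned pairs), and (c) orthogonality is invoked at exactly the right moment to get finiteness for free in the final step; everything else is a direct application of OCA and unwinding the definitions.
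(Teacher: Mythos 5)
Your proposal is correct and is exactly the paper's argument: the paper's proof consists of the single remark that one applies OCA to the crossing colouring on pairs from $\Gamma_0\times\Gamma_1$, and you have filled in precisely the details it leaves implicit (passing to disjoint representatives, openness of the crossing colour, the $K_1$-decomposition yielding a countable separating family contradicting strongness, and finiteness of the cross-intersections from orthogonality). The only caveat is cosmetic: the survey's definition of ``countably separated'' should be read modulo finite sets (as in the original $\mathcal{P}(\omega)/\mathrm{fin}$ setting), which is exactly the reading your transfer from disjoint pairs to arbitrary pairs uses.
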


The proof of this is a direct application of OCA to the set of pairs $\{(a,b),(a',b')\}\subset \Gamma_0\times\Gamma_1$ such that $(a\cap b')\cup (a'\cap b)\neq\emptyset$. It is unclear if the higher-dimensional instances of Theorem~\ref{criticalstronggap} admit any analysis in terms of a Luzin-like condition that would allow a non-definable version as Theorem~\ref{OCA} above. One essential obstruction is that we cannot expect $\aleph_1$-generated $n$-gaps to play any role for $n>2$, by Theorem~\ref{aleph1}.\\

Theorems~\ref{criticalstronggap} and~\ref{criticalgap} state that, in a certain sense, every analytic (strong) $n_\ast$-gap contains a (strong) critical $n_\ast$-gap. However, the notion of \emph{being contained} that is implicit in these statements ignores important information. A finer notion is obtained by demanding the preservation of the orthogonals:

\begin{dfn}\label{gaporder}
Let $\Gamma$ and $\Delta$ be $n_\ast$-gaps on the sets $N$ and $M$ respectively. We say that $\Gamma\leq \Delta$ if\footnote{There are some variations of the order $\leq$ that lead essentially to the same theory, see \cite[Section 1.4]{analyticmultigaps}} there exists a one-to-one function $\phi:N\To M$ such that for every $i\in n$:
\begin{enumerate}
\item If $x\in \Gamma_i$ then $\phi(x)\in \Delta_i$
\item If $x\in\Gamma_i^\perp$, then $\phi(x)\in \Delta_i^\perp$
\end{enumerate}
\end{dfn}

When considering this finer relation between gaps, there is not anymore just one critical analytic $n_\ast$-gap below all others, but still there is a finite number of minimal $n_\ast$-gaps so that any analytic $n_\ast$-gap contains one of them.

\begin{thm}\label{minimalstronggaps}
Given $n<\omega$, there exists a finite list of closed strong $n_\ast$-gaps $\Gamma^1,\ldots,\Gamma^{q_n}$ such that for every analytic strong $n_\ast$-gap $\Gamma$ there exists $j$ such that $\Gamma^j \leq \Gamma$.
\end{thm}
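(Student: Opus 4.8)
\textbf{Proof plan for Theorem~\ref{minimalstronggaps}.}

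The plan is to combine the single-critical-gap result of Theorem~\ref{criticalstronggap} with a finiteness argument for the finer order $\leq$ of Definition~\ref{gaporder}. The first step is to observe that Theorem~\ref{criticalstronggap} already produces, inside any analytic strong $n_\ast$-gap $\Gamma$, a copy of the critical strong gap $\mathcal{S}^n$ in the weak sense (preservation of the $\Gamma_i$ only, not their orthogonals). So the task reduces to understanding, for each embedding $\phi:n^{<\omega}\to N$ given by that theorem, how the orthogonals $\Gamma_i^\perp$ pull back along $\phi$. The key point is that the family $\phi^{-1}(\Gamma_i^\perp)$ is a family of subsets of $n^{<\omega}$ that contains $(\mathcal{S}^n_i)^\perp$ (trivially, since $\phi(\mathcal{S}^n_i)\subset\Gamma_i$ forces orthogonality to be reflected in this direction), but may be strictly larger; the ``extra'' orthogonal sets are exactly the obstruction to $\mathcal{S}^n\leq\Gamma$ directly.

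Next I would show that, after possibly passing to a further subtree (a copy of $n^{<\omega}$ inside $n^{<\omega}$ via a level-preserving tree embedding) and re-applying the analytic Ramsey-type machinery alluded to in Sections~\ref{sectionstrong}, one can stabilize the behaviour of each $\phi^{-1}(\Gamma_i^\perp)$ relative to the canonical closed families built from the tree. Concretely: there are only finitely many ``shapes'' that a closed, orthogonal-to-$\mathcal{S}^n_i$, infinitely hereditary family on $n^{<\omega}$ can have once we demand invariance under the tree automorphisms fixing the $i$-chain structure — this is where the first-move/record combinatorics of the $n$-adic tree enters. For each tuple of admissible shapes $(\mathcal{O}_0,\dots,\mathcal{O}_{n-1})$ one defines a closed strong $n_\ast$-gap $\Gamma^{(\mathcal{O}_0,\dots,\mathcal{O}_{n-1})}$ on $n^{<\omega}$ whose $i$-th side is $\mathcal{S}^n_i$ and whose $i$-th orthogonal is forced to contain $\mathcal{O}_i$; one then checks each such candidate is still a strong $n_\ast$-gap (non-separation is inherited from $\mathcal{S}^n$ since we only enlarged orthogonals, and one must verify the orthogonality and non-countable-separation survive). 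The list $\Gamma^1,\dots,\Gamma^{q_n}$ is the collection of these, pruned to the $\leq$-minimal ones, and finiteness of the list is immediate because there are only finitely many tuples of shapes.

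The main obstacle I anticipate is the stabilization step: showing that after finitely much refinement the pulled-back orthogonals $\phi^{-1}(\Gamma_i^\perp)$ actually agree (not merely are sandwiched between $(\mathcal{S}^n_i)^\perp$ and something) with one of the finitely many canonical shapes, at least on a sub-copy of the tree. This requires a parametrized/tree version of the Galvin--Prikry or Silver-type theorem applied simultaneously to the $n$ analytic families $\Gamma_i^\perp$, together with an argument that the relevant ``shape'' invariant takes only finitely many values — which is plausible because an infinitely hereditary closed family on a chain-indexed tree is essentially determined by which ``levels'' and ``first moves'' it permits, a finite amount of data modulo the automorphism group. A secondary difficulty is verifying that each constructed candidate $\Gamma^j$ is genuinely a \emph{strong} $n_\ast$-gap and not merely an $n_\ast$-gap; for this one reruns the Baire-category argument sketched after the definition of $\mathcal{S}^n$, noting that enlarging the orthogonal families does not affect the countable-separation computation, which only involves the $\Gamma_i$ themselves.
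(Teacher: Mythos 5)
Your overall skeleton (critical strong gap from Theorem~\ref{criticalstronggap}, then Ramsey stabilization on the tree, then finitely many canonical closed gaps) is the right one and matches the paper, but two concrete points in your plan do not work as stated. First, the ``trivial'' inclusion you rely on is backwards: from $\phi(\mathcal{S}^n_i)\subset\Gamma_i$ one gets $\phi^{-1}(\Gamma_i^\perp)\subset(\mathcal{S}^n_i)^\perp$ (if $\phi(x)$ is orthogonal to $\Gamma_i$ it is in particular orthogonal to the images of all $i$-chains), not the containment $(\mathcal{S}^n_i)^\perp\subset\phi^{-1}(\Gamma_i^\perp)$ that you call trivial. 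That latter inclusion is exactly the nontrivial half of $\leq$, it is generally false for the $\phi$ produced by Theorem~\ref{criticalstronggap}, and its failure is the whole point of the theorem: one cannot in general get $\mathcal{S}^n\leq\Gamma$, only one of finitely many larger closed gaps below $\Gamma$. Second, your candidate gaps ``whose $i$-th side is $\mathcal{S}^n_i$ and whose $i$-th orthogonal is forced to contain $\mathcal{O}_i$'' are not well defined: the orthogonal of a family is determined by the family, so you cannot prescribe it independently. The correct move, which is what the paper does, is to enlarge the sides themselves: take $\Gamma_{S_i}$, the family of all $(i,j)$-combs with $(i,j)\in S_i$, for disjoint sets $S_i\subset n\times n$ with $(i,i)\in S_i$. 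Finiteness of the basis is then simply the finiteness of the number of such tuples $(S_0,\ldots,S_{n-1})$, and each $\{\Gamma_{S_i}\}$ is a closed strong $n_\ast$-gap by the quoted result that comb families with nonempty $S_i$ are never countably separated (your remark that strongness ``only involves the $\Gamma_i$ themselves'' is correct in spirit, but it applies to the comb families, not to $\mathcal{S}^n_i$ with decorated orthogonals).

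The missing structural ingredient that makes the stabilization step you worry about actually close is the fact that the $(i,j)$-combs are the minimal first-move-equivalence classes: every infinite subset of $n^{<\omega}$ contains an $(i,j)$-comb, and infinite subsets of combs are combs of the same type. Given this, the argument is: start from $u$ as in Theorem~\ref{criticalstronggap}; for each pair $(i,j)$ and each $k<n$ color the $(i,j)$-combs $X$ by whether $u(X)\in\Delta_k$, and apply Theorem~\ref{strongRamsey} finitely many times to get $A\approx n^{<\omega}$ on which each comb type is sent either always into $\Delta_k$ or always outside it; set $S_k=\{(i,j):u\text{ sends }(i,j)\text{-combs of }A\text{ into }\Delta_k\}$ (note $(k,k)\in S_k$ by criticality). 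Preservation of orthogonals then follows from minimality of combs: if $x\in\Gamma_{S_k}^\perp$ but $u(x)$ met some element of $\Delta_k$ in an infinite set, hereditarity of $\Delta_k$ plus the comb-extraction property would produce a comb inside $x$ of a type in $S_k$, a contradiction. Your appeal to ``finitely many shapes of closed hereditary families invariant under tree automorphisms'' is both vaguer and not obviously true as stated; the finite classification that is actually available (and sufficient) is of minimal equivalence classes of infinite sets, i.e.\ the $n^2$ comb types, not of families.
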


\begin{thm}\label{minimalgaps}
Given $n<\omega$, there exists a finite list of closed $n_\ast$-gaps $\Gamma^1,\ldots,\Gamma^{p_n}$ such that for every analytic $n_\ast$-gap $\Gamma$ there exists $j$ such that $\Gamma^j \leq \Gamma$.
\end{thm}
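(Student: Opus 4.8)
Throughout, write $\Gamma^\perp_i$ for the orthogonal of a family and $\Gamma^{\perp\perp}_i=(\Gamma_i^\perp)^\perp$ for its double orthogonal; recall the elementary Galois-connection facts $A\subseteq A^{\perp\perp}$, $A\subseteq B\Rightarrow B^\perp\subseteq A^\perp$ and $A^{\perp\perp\perp}=A^\perp$. The plan is to build the finite list by starting from the critical gap supplied by Theorem~\ref{criticalgap} and then classifying, via the Ramsey theory of the $n$-adic tree, the finitely many ways in which the orthogonals of an analytic $n_\ast$-gap can sit above those of $\mathcal{C}^n$.

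\emph{Reduction to the critical gap.} Let $\Gamma=\{\Gamma_i:i<n\}$ be an analytic $n_\ast$-gap on $N$. By Theorem~\ref{criticalgap} there is a one-to-one $\phi_0:n^{<\omega}\To N$ and a permutation $\varepsilon$ with $\phi_0(\mathcal{C}^n_i)\subseteq\Gamma_{\varepsilon(i)}$; relabelling the families, assume $\varepsilon=\mathrm{id}$. Pull $\Gamma$ back along $\phi_0$: put $\Gamma'_i=\{X\subseteq n^{<\omega}:\phi_0(X)\in\Gamma_i\}$. Since $\phi_0$ is an injection between countable sets, the induced restriction map $2^N\To 2^{n^{<\omega}}$ is continuous, so each $\Gamma'_i$ is analytic; also $\mathcal{C}^n_i\subseteq\Gamma'_i$, each $\Gamma'_i$ is infinitely hereditary, and $\Gamma'$ cannot be separated since a separation of $\Gamma'$ would separate $\mathcal{C}^n$. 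Thus $\Gamma'$ is an analytic $n_\ast$-gap on $n^{<\omega}$ refining the critical gap. Because $\Gamma'\leq\Gamma$ is witnessed by $\phi_0$ itself (condition (1) of Definition~\ref{gaporder} is the definition of $\Gamma'_i$, and condition (2) is the displayed equivalence below), and $\leq$ is transitive, it suffices to produce $\Gamma^j\leq\Gamma'$ with $\Gamma^j$ drawn from a finite list of closed $n_\ast$-gaps depending only on $n$.

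\emph{What the order $\leq$ asks for.} Let $\psi:M\To n^{<\omega}$ be a further injection and $\Gamma''_i=\{X\subseteq M:\psi(X)\in\Gamma'_i\}$ the trace of $\Gamma'$ on the copy $\psi(M)$. Since $\psi$ is injective and $\psi(X)\subseteq\psi(M)$, one checks, using that each $\Gamma'_i$ is infinitely hereditary, that
\[
\psi(X)\in\Gamma'_i\iff X\in\Gamma''_i\qquad\text{and}\qquad\psi(X)\in(\Gamma'_i)^\perp\iff X\in(\Gamma''_i)^\perp .
\]
Hence a closed $n_\ast$-gap $\Gamma^j$ on $M$ satisfies $\Gamma^j\leq\Gamma'$ via $\psi$ exactly when, for each $i<n$,
\[
\Gamma^j_i\subseteq\Gamma''_i\subseteq(\Gamma^j_i)^{\perp\perp}.
\]
(Condition (1) is $\Gamma^j_i\subseteq\Gamma''_i$; condition (2) is $(\Gamma^j_i)^\perp\subseteq(\Gamma''_i)^\perp$, i.e. $(\Gamma''_i)^{\perp\perp}\subseteq(\Gamma^j_i)^{\perp\perp}$; together with (1) and the Galois facts this is equivalent to the displayed sandwich.) So the whole problem is: find a copy $\psi(M)$ of a suitable countable tree inside $n^{<\omega}$ on which every trace $\Gamma''_i$ is squeezed between a \emph{standard} closed family $\Gamma^j_i$ and its double orthogonal.

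\emph{Homogenizing the orthogonals.} Fix $i$. From $\mathcal{C}^n_i\subseteq\Gamma''_i$ we get $(\Gamma''_i)^\perp\subseteq(\mathcal{C}^n_i)^\perp$, and one first computes $(\mathcal{C}^n_i)^\perp$ explicitly: it is the closed family of those $Y\subseteq n^{<\omega}$ meeting every $[i]$-chain in a finite set. Next one homogenizes $\Gamma''_i$ itself: associate to it the coloring that records, for each finite sub-configuration of the $n$-adic tree, whether it can be completed to an element of $\Gamma''_i$ of a prescribed combinatorial shape, and apply the Ramsey theory of the $n$-adic tree developed in Sections~\ref{sectionstrong}--\ref{sectionweak} (the first-move/record combinatorics, together with the Milliken-type theorems and their extension to analytic sets) to pass to a sub-copy of the tree on which this coloring is constant. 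On such a sub-copy $\Gamma''_i$ contains a fixed closed family $\mathcal{G}^{(t_i)}_i$ with $\mathcal{C}^n_i\subseteq\mathcal{G}^{(t_i)}_i\subseteq\Gamma''_i$ which is cofinal in $\Gamma''_i$ for $\subset^\ast$, so that $(\mathcal{G}^{(t_i)}_i)^\perp=(\Gamma''_i)^\perp$ and hence $(\mathcal{G}^{(t_i)}_i)^{\perp\perp}=(\Gamma''_i)^{\perp\perp}$; the number of possible ``types'' $t_i$ is finite because the shapes and colorings involved are bounded in terms of $n$. Doing this for all $i<n$ at once — by iterating the thinning, or via a product form of the tree Ramsey theorem — yields a single sub-copy $\psi(M)$ on which $\mathcal{G}^{(t_i)}_i\subseteq\Gamma''_i\subseteq(\mathcal{G}^{(t_i)}_i)^{\perp\perp}$ for every $i$. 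Put $\Gamma^j=\{\mathcal{G}^{(t_i)}_i:i<n\}$: it is closed; it is an $n_\ast$-gap, since the disjointness and hereditariness axioms pass down from $\Gamma''$ and a separation of $\Gamma^j$ would, by cofinality, separate $\mathcal{C}^n$; and $\Gamma^j\leq\Gamma'\leq\Gamma$ by the previous paragraph. As the tuple $t=(t_0,\ldots,t_{n-1})$ ranges over the finitely many possibilities, the $\Gamma^j$ range over a finite list depending only on $n$; discarding the tuples that fail to yield a genuine $n_\ast$-gap leaves $\Gamma^1,\ldots,\Gamma^{p_n}$.

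\emph{The main obstacle.} The substantial step is the homogenization in the previous paragraph: one must show that the analytic family $\Gamma''_i$ — which a priori may be any analytic family sandwiched between $\mathcal{C}^n_i$ and the power set — is, after passing to a sub-copy of the $n$-adic tree, $\subset^\ast$-cofinal with one of \emph{finitely many} closed families, and that these successive thinnings (one per coordinate $i$) can be arranged not to spoil the homogeneity already achieved on the other coordinates. This is precisely where the structure theory of the $n$-adic tree and the finite-colour (and analytic) Ramsey theorems are indispensable; the parametrization of the ``types'' $t_i$, and the proof that there are only finitely many, is the combinatorial core. Theorem~\ref{minimalstronggaps} is obtained by the same scheme, replacing $\mathcal{C}^n$ and $[i]$-chains throughout by $\mathcal{S}^n$ and $i$-chains, and ``separated'' by ``countably separated''.
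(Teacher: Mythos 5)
Your two reductions are fine and coincide with the paper's route: pulling the gap back along the embedding supplied by Theorem~\ref{criticalgap}, and reformulating $\Gamma^j\leq\Gamma'$ as the sandwich $\Gamma^j_i\subseteq\Gamma''_i\subseteq(\Gamma^j_i)^{\perp\perp}$ (both using infinite hereditariness correctly). But the decisive step is missing, and you say so yourself: the existence of a \emph{finite} catalog of closed families between which every trace can be squeezed is precisely the content of the theorem, and your "homogenization" paragraph only postulates it. In the paper this catalog is supplied by the record combinatorics of Section~\ref{sectionweak}: the minimal record-equivalence classes of infinite subsets of $n^{<\omega}$ are the finitely many \emph{types} $\tau\in\mathfrak{T}_n$, every infinite subset of $n^{<\omega}$ contains a set of some type, and for a set of types $S$ the family $\Gamma_S$ is closed. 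The homogenization is then concrete: for each type $\tau$ and each $k<n$, color the sets of type $\tau$ by whether their image lies in $\Gamma_k$; this is Baire-measurable because $\Gamma_k$ is analytic, and finitely many applications of Theorem~\ref{weakRamsey} yield $T\sim n^{<\omega}$ on which each type is homogeneous. Putting $S_i=\{\tau:\text{the type-}\tau\text{ subsets of }T\text{ land in }\Gamma_i\}$ gives Theorem~\ref{standardbelowgap}, and finiteness of $\mathfrak{T}_n$ makes the list of gaps $\{\Gamma_{S_i}:i<n\}$ finite, which is Theorem~\ref{minimalgaps}. Your coloring of finite sub-configurations by "completability to a prescribed shape" is not the object Theorem~\ref{weakRamsey} applies to; coloring the infinite sets of each fixed type directly is what makes the argument work, and it also removes your worry about later thinnings spoiling earlier homogeneity, since homogeneity of a type on $T$ passes to every sub-copy of $T$.

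In addition, the mechanism you propose for the orthogonals would fail: you ask for a closed family $\mathcal{G}^{(t_i)}_i$ that is $\subset^\ast$-cofinal in the trace $\Gamma''_i$, so that the orthogonals are equal. Cofinality is not attainable in general: homogenization only controls sets of each exact type, and $\Gamma''_i$ may still contain sets built from infinitely many type sets that are not almost contained in any member of a fixed closed family from a finite list. Fortunately equality is more than the sandwich needs; the required inclusion $(\Gamma^j_i)^\perp\subseteq(\Gamma''_i)^\perp$ follows instead from type-minimality plus hereditariness: if $x$ is orthogonal to every type-$\tau$ subset of $T$ with $\tau\in S_i$, and $x\cap y$ were infinite for some $y$ in the hereditary family $\Gamma''_i$, then $x\cap y\in\Gamma''_i$ contains a set of some type $\tau$, homogeneity forces $\tau\in S_i$, contradicting orthogonality. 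So the statement you reduce everything to is true, but your route to it (cofinality) is wrong, and the combinatorial core — the classification into finitely many types — is exactly what remains unproven in your write-up.
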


Theorem~\ref{minimalgaps} is proven in \cite{IHES}. A less general version of Theorem~\ref{minimalstronggaps} is found in \cite{stronggaps}, but essentially the same proof works for the statement here. Of course, one can suppose that in the lists of gaps provided in these theorems we have that $\Gamma^i\not\leq \Gamma^j$ when $i\neq j$, and in this case we have the right to call the gaps in such a list \emph{the minimal analytic (strong) $n_\ast$-gaps}. Formally, we can introduce the following definitions:

\begin{dfn}
An analytic (strong) $n_\ast$-gap $\Gamma$ is said to be a minimal analytic (strong) $n_\ast$-gap if for every other analytic (strong) $n_\ast$-gap $\Delta$, if $\Delta\leq \Gamma$ then $\Gamma\leq\Delta$.
\end{dfn}

\begin{dfn}
Two minimal analytic (strong) $n_\ast$-gaps $\Gamma$ and $\Delta$ are equivalent if $\Gamma\leq\Delta$ and $\Delta\leq\Gamma$.
\end{dfn}

In this language, Theorems~\ref{minimalgaps} and \ref{minimalstronggaps} can be reformulated as follows:

\begin{thm} For every $n<\omega$, there exists only finitely many equivalence classes of minimal analytic (strong) $n_\ast$-gaps. Moreover, for every analytic (strong) $n_\ast$-gap $\Delta$ there exists a minimal analytic (strong) $n_\ast$-gap $\Gamma$ such that $\Gamma\leq \Delta$.
\end{thm}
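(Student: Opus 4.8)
The plan is to derive the reformulated statement as a formal consequence of Theorem~\ref{minimalgaps} and Theorem~\ref{minimalstronggaps} together with a general order-theoretic argument about the quasi-order $\leq$ on analytic $n_\ast$-gaps. I will treat the analytic and the strong analytic case in parallel, since the argument is identical; write $\mathcal{G}$ for the class of (strong) analytic $n_\ast$-gaps and recall that $\leq$ is a quasi-order on $\mathcal{G}$ (reflexivity and transitivity of $\leq$ are immediate from composing the witnessing injections and checking the two defining conditions, and this observation should perhaps be recorded explicitly before the theorem).

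First I would fix the finite list $\Gamma^1,\ldots,\Gamma^{r}$ provided by Theorem~\ref{minimalgaps} (respectively Theorem~\ref{minimalstronggaps}), so that every $\Delta\in\mathcal{G}$ satisfies $\Gamma^j\leq\Delta$ for some $j$. I would then pass to a sublist that is an \emph{antichain cover}: among $\Gamma^1,\ldots,\Gamma^{r}$, discard any $\Gamma^i$ for which there is $\Gamma^j$ with $j\neq i$ and $\Gamma^j\leq\Gamma^i$ but not $\Gamma^i\leq\Gamma^j$; iterating this finitely many times (the list is finite and each step strictly decreases a finite rank, e.g.\ the number of $\leq$-predecessors in the list modulo equivalence) leaves a list $\Gamma^{j_1},\ldots,\Gamma^{j_k}$ with the property that no member is strictly below another, while the covering property is preserved because $\leq$ is transitive: if $\Gamma^i$ was discarded in favor of $\Gamma^j$ and $\Gamma^i\leq\Delta$, then $\Gamma^j\leq\Gamma^i\leq\Delta$. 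Next I claim each surviving $\Gamma^{j_s}$ is a minimal analytic (strong) $n_\ast$-gap in the sense of the definition: if $\Delta\in\mathcal{G}$ and $\Delta\leq\Gamma^{j_s}$, then by the covering property $\Gamma^{j_t}\leq\Delta$ for some $t$, hence $\Gamma^{j_t}\leq\Delta\leq\Gamma^{j_s}$; by construction of the antichain this forces $t=s$ (there is no strict inequality between distinct members), and chasing the two inequalities gives $\Gamma^{j_s}\leq\Delta\leq\Gamma^{j_s}$, so in particular $\Gamma^{j_s}\leq\Delta$, which is exactly minimality. Finally, for an arbitrary $\Delta\in\mathcal{G}$ the covering property again yields a minimal $\Gamma^{j_s}\leq\Delta$, establishing the "moreover" clause.

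For the finiteness of equivalence classes: any minimal analytic (strong) $n_\ast$-gap $\Gamma$ satisfies $\Gamma^{j_s}\leq\Gamma$ for some $s$ by the covering property, and then minimality of $\Gamma$ (applied to $\Delta=\Gamma^{j_s}$, which is in $\mathcal{G}$) gives $\Gamma\leq\Gamma^{j_s}$, so $\Gamma$ is equivalent to $\Gamma^{j_s}$. Thus every minimal gap is equivalent to one of the finitely many $\Gamma^{j_1},\ldots,\Gamma^{j_k}$, and there are at most $k$ equivalence classes (exactly $k$ if the $\Gamma^{j_s}$ are pairwise inequivalent, which the antichain condition guarantees). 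I expect the only genuinely delicate point to be bookkeeping: making sure the "prune to an antichain" step terminates and preserves the covering property, and checking that the definition of \emph{minimal} (phrased as "$\Delta\leq\Gamma\Rightarrow\Gamma\leq\Delta$") is correctly matched against the covering lists — everything else is a routine unwinding of quasi-order formalism, with all the real mathematical content already contained in Theorems~\ref{minimalgaps} and \ref{minimalstronggaps}.
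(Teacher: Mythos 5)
Your proposal is correct and follows essentially the same route as the paper, which presents this theorem precisely as a reformulation of Theorems~\ref{minimalgaps} and~\ref{minimalstronggaps} obtained by pruning the finite lists to a $\leq$-incomparable sublist and declaring its members the minimal analytic (strong) $n_\ast$-gaps. One small imprecision: your pruning rule does not force $t=s$ nor pairwise inequivalence of the survivors (two equivalent gaps can both survive), but since any such $\Gamma^{j_t}$ with $\Gamma^{j_t}\leq\Gamma^{j_s}$ is then equivalent to $\Gamma^{j_s}$, the chain $\Gamma^{j_s}\leq\Gamma^{j_t}\leq\Delta$ still gives minimality and the bound of at most $k$ equivalence classes, so nothing in the theorem is affected.
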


The combination of Theorem~\ref{criticalgap} or Theorem~\ref{criticalstronggap} with Ramsey theoretic techniques provides a list like stated in Theorem~\ref{minimalgaps} or Theorem~\ref{minimalstronggaps}, but such a list is very redundant and a further combinatorial analysis to find the really minimal elements. Sections~\ref{sectionstrong} and~\ref{sectionweak} below explain the situation in the strong and general case respectively.

\section{First-move combinatorics of the $n$-adic tree and strong analytic gaps}\label{sectionstrong}

The level of an element of the $n$-adic tree $n^{<\omega}$ is defined as $|(s_0,\ldots,s_{p-1})| = p$. We introduce a well order $\prec$ on $n^{<\omega}$ given by $s\prec t$ if either $|s|<|t|$, or $|s|=|t| = p$ and $n^{p} s_0 + n^{p-1}s_1 + \ldots < n^p t_0 + n^{p-1}t_1 + \ldots$.\\

The meet of $s,t\in n^{<\omega}$ is the infimum of $s$ and $t$ in the order $\leq$. That is, $s\wedge t$ it is the largest element $r$ such that $r\leq t$ and $r\leq s$.\\

If $t\leq s$, then $s\setminus t$ is the element $r\in n^{<\omega}$ such that $t^\frown r = s$.\\

By the \emph{first-move combinatorics} of the $n$-adic tree, we refer to the combinatorial problems about subsets of the $n$-adic tree where the only relevant structure is given by the the meet function $t\wedge s$, the order $\prec$ and the `first move' between comparable nodes (if $t<s$, knowing which is the $i\in n$ such that $t^\frown i\leq s$).\\
     
We need a few definitions to make this idea precise. A set $A\subset n^{<\omega}$ will be said to be meet-closed if $t\wedge s\in A$ whenever $t\in A$ and $s\in A$. The meet-closure of $A$ is the intersection of all  meet-closed sets that contain $A$ and is denoted by $\langle\langle A\rangle\rangle$.\\

A bijection $f:A\To B$ is a first-move-equivalence  if it is the restriction of a bijection $f:\langle\langle A\rangle\rangle\To \langle\langle B\rangle\rangle$ such that for every $t,s\in \langle\langle A\rangle\rangle$
\begin{enumerate}
\item $f(t\wedge s) = f(t)\wedge f(s)$
\item $f(t) \prec f(s)$ if and only if $t\prec s$
\item If $i\in n$ is such that $t^\frown i \leq s$, then $f(t)^\frown i \leq f(s)$.
\end{enumerate}

The sets $A$ and $B$ are called first-move-equivalent if there is a first-move-equivalence between them. In this case, we write $A\approx B$. In this language, the following is a consequence of Milliken's partition theorem for trees~\cite{Milliken}:

\begin{thm}\label{strongRamsey}
Fix a set $A_0\subset n^{<\omega}$. If we color the set $\{A\subset n^{<\omega} : A\approx A_0\}$ into finitely many colors in a Baire-measurable way\footnote{To color the set $\mathcal{A} = \{A\subset n^{<\omega} : A\approx A_0\}$ into finitely many colors, just means to give a function $\mathcal{A}\To F$ where $F$ is finite set whose elements we call \emph{colors}. The Baire-measurabily refers to the fact that we can view $\mathcal{A}$ as a subset of the Cantor set $2^{n^{<\omega}}\equiv 2^\omega$, with its Baire $\sigma$-algebra generated by the open sets and the meager sets \cite[Section 8F]{Kechris}.}, then there exists a set $T\subset n^{<\omega}$ such that $T\approx n^{<\omega}$ and all the sets from $\{A\subset T : A\approx A_0\}$ have the same color.
\end{thm}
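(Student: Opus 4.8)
The plan is to derive Theorem~\ref{strongRamsey} from Milliken's partition theorem for strong subtrees. First I would recall the precise form of Milliken's theorem that is needed: if one colours, in a Baire-measurable way, the family of all \emph{$k$-dense} (or \emph{strong}) subtrees of a finitely branching tree into finitely many colours, then there is a single strong subtree all of whose $k$-dense subtrees receive one colour. The version for sequences of trees (colouring $k$-tuples of levels of a strong subtree) is what one typically invokes; the Baire-measurable extension over the infinite case is the one recorded in Todorcevic's book on Ramsey spaces. The point is that the infinitely many strong subtrees of $n^{<\omega}$, with the natural fusion/amalgamation structure, form a topological Ramsey space, so Baire-measurable colourings are reduced to open colourings and the pigeonhole applies.

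The second step is the translation between the combinatorial notion ``$A\approx A_0$'' used here and the language of strong subtrees. Given the finite pattern $A_0$, its meet-closure $\langle\langle A_0\rangle\rangle$ is a finite meet-closed set; a finite meet-closed subset of $n^{<\omega}$, together with the information of the order $\prec$ and the first-move data, is (up to first-move-equivalence) the same thing as an \emph{embedding type} of a finite tree into a strong subtree of $n^{<\omega}$. I would check that a set $T$ with $T\approx n^{<\omega}$ is precisely (the node set of) an infinite strong subtree of $n^{<\omega}$: the condition that the meet-closure of $T$ is order-isomorphic and first-move-isomorphic to $n^{<\omega}$ forces $T$ to be level-preserving and fully branching in the sense required by Milliken. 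Conversely, any infinite strong subtree gives such a $T$. Under this correspondence, ``$A\subset T$ with $A\approx A_0$'' becomes ``$A$ is a subset of the strong subtree $T$ realising the fixed embedding type of $A_0$'', which is exactly the class of configurations coloured in Milliken's theorem.

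Once the dictionary is set up, the argument is short: given the Baire-measurable colouring of $\{A : A\approx A_0\}$, pull it back along the correspondence to a Baire-measurable colouring of the copies of the embedding type of $A_0$ inside $n^{<\omega}$; apply Milliken's theorem to obtain a strong subtree monochromatic for that type; push it back to the desired $T\subset n^{<\omega}$ with $T\approx n^{<\omega}$ and all $A\subset T$ with $A\approx A_0$ of one colour. One bookkeeping subtlety is that $\approx$ as defined also remembers the ambient well-order $\prec$, not just the tree order; but $\prec$ is definable from the level function and the lexicographic order, both of which are preserved by strong subtree embeddings (strong subtrees sit level-by-level inside a block of consecutive levels and respect the left-to-right order), so no extra work is needed there.

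The main obstacle I anticipate is not the pigeonhole step but getting the Baire-measurable version of Milliken's theorem cleanly cited and making the equivalence ``$T\approx n^{<\omega}$ $\iff$ $T$ is an infinite strong subtree'' genuinely tight — in particular handling the empty sequence and the requirement that a strong subtree's root may sit at a positive level, which must be reconciled with the normalisation implicit in first-move-equivalence. This is a matter of care rather than depth, but it is where a hasty proof would have a gap. A secondary point is ensuring that the finitely many colours and the Baire-measurability survive the pullback, which they do because the correspondence between subsets of $n^{<\omega}$ realising a fixed finite type and the indexing set in Milliken's theorem is given by a continuous (indeed Borel) reparametrisation of the relevant Cantor spaces.
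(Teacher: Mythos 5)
Your overall route (reduce to Milliken's partition theorem, in a Baire-measurable infinite-dimensional form) is the one the paper itself points to, but the dictionary on which your reduction rests is false, and that is exactly where the real work lies. First-move-equivalence only remembers meets, first moves and the relative order $\prec$; it does not remember levels. Hence $T\approx n^{<\omega}$ does not force $T$ to be level-aligned, and conversely a level-aligned strong subtree contains $\approx$-copies of $A_0$ that are not strongly embedded in it. Concretely, for $A_0=\{(0),(1)\}$ both a pair $t',s'$ lying on one common level above the two successors of a node and a skew pair $t,s$ with $r^\frown 0\leq t$, $r^\frown 1\leq s$ and $|t|<|s|$ are $\approx A_0$; so the clopen (hence Baire-measurable) coloring asking whether the two maximal nodes of $A$ have equal length admits no monochromatic strong subtree at all. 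The monochromatic $T$ must in general be a skew copy, which is precisely why the conclusion of the theorem reads ``$T\approx n^{<\omega}$'' rather than ``$T$ is a strong subtree''. Consequently your pull-back/push-forward scheme (``copies of $A_0$ are exactly the configurations colored by Milliken; the strong subtree Milliken returns is the desired $T$; no extra work is needed for $\prec$'') cannot work as written; also note that in the intended application $A_0$ is an infinite comb, so it is not a finite pattern.

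What is missing is the envelope-and-diagonalization step, which is the nontrivial content of deriving such shape-Ramsey statements from Milliken's theorem: pass first to a strongly diagonal (skew) copy of $n^{<\omega}$ in which the relevant nodes and meets occupy pairwise distinct levels; assign to each $\approx$-copy of $A_0$ its strong-subtree envelope, observe that each envelope supports only finitely many patterns of copies; transfer the given coloring to finitely many colorings of envelopes, apply Milliken's theorem to those, and finally thin out again to a skew $T\approx n^{<\omega}$ on which a single pattern per $\approx$-class survives, so that monochromaticity for envelopes becomes monochromaticity for all copies. Alternatively—and this is the argument the paper actually sketches—Theorem~\ref{strongRamsey} follows from the record-equivalence partition theorem (Theorem~\ref{weakRamsey}) by composing with the map $\psi(s_0,s_1,\ldots,s_k)=(s_0,n-1,s_1,n-1,\ldots,s_k,n-1)$, which converts first-move-equivalence into record-equivalence and makes the transfer immediate.
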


The above statement can be also proven as a corollary of \cite[Theorem 1.5]{analyticmultigaps}, we will explain the argument later in Section~\ref{sectionweak} after Theorem~\ref{weakRamsey}. Finding large monochromatic sets for colorings on a given structure is the general theme of Ramsey theory. We refer to \cite{Ramsey} for an exposition on the Ramsey theory of countable infinite structures, including the Milliken's theorem mentioned above and many related results.\\

The notion of $i$-chain that appeared in the definition of the critical strong $n$-gap can be rephrased saying that set $X\subset n^{<\omega}$ is an $i$-chain if and only if $$X\approx \{ (i), (ii), (iii), (iiii), \ldots\}$$
We can generalize this notion to include some kinds of sets which are not chains. If $i,j\in n$, a set $X$ is called an $(i,j)$-comb if 
$$X\approx \{ (j), (iij), (iiiij), (iiiiiij), \ldots\}$$
In this way, an $i$-chain is the same as an $(i,i)$-comb. When $i\neq j$, $(i,j)$-combs are antichains (that is, no two different elements are comparable in the order $\leq$). The $(i,j)$-combs are the minimal first-move-equivalence classes of infinite subsets of $n^{<\omega}$, in the sense that the following two facts hold:
\begin{enumerate}
\item If $X$ is an $(i,j)$-comb, then every infinite subset of $X$ is again an $(i,j)$-comb.
\item Every infinite subset $X\subset n^{<\omega}$ contains a further subset $Y\subset X$ which is an $(i,j)$-comb for some $i,j\in n$ \cite[Lemma 7]{stronggaps}
\end{enumerate}

For $S\subset n\times n$, let $\Gamma_S$ be the family of all subsets of $n^{<\omega}$ which are $(i,j)$-combs for some $(i,j)\in S$. The following two results state the connection between analytic strong $n_\ast$-gaps and combs of the $n$-adic tree: every analytic strong $n_\ast$-gap contains - in the sense of the order $\leq$ of Definition~\ref{gaporder}- a strong gap made of combs.

\begin{thm}
If $\{S_i : i<n\}$ are nonempty subsets of $m\times m$, then $\{\Gamma_{S_i} : i\in n\}$ are closed families of subsets of $m^{<\omega}$ which are not countably separated.
\end{thm}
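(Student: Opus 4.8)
The plan is to verify the three defining conditions of an $n_\ast$-gap for the family $\{\Gamma_{S_i} : i\in n\}$, noting that in fact only the first two are at issue since hereditariness is built into the notion of comb. I would begin by recording the structural facts about combs that do the heavy lifting: any infinite subset of an $(i,j)$-comb is again an $(i,j)$-comb (fact (1) above), so each $\Gamma_{S_i}$ is automatically infinitely hereditary; and every infinite subset of $m^{<\omega}$ contains an $(i,j)$-comb for some $(i,j)$ (fact (2), from \cite[Lemma 7]{stronggaps}), which will be the engine behind non-separation. I would also want a quantitative version of fact (1): an $(i,j)$-comb $X = \{x_0, x_1, \ldots\}$ enumerated so that consecutive elements are related by $x_{k+1} = x_k{}^\frown i^\frown\cdots{}^\frown i^\frown j^\frown(\text{stuff})$ has the property that its elements climb through infinitely many levels, and the "$j$ after a block of $i$'s" pattern is locally detectable. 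Closedness of each $\Gamma_{S_i}$ as a subset of $2^{m^{<\omega}}$ should be routine: being an $(i,j)$-comb is, after passing to the meet-closure, a condition checkable on finitely many coordinates at a time (the relevant meets, the $\prec$-order, and first moves between comparable nodes are all continuous in the product topology), and a finite union of closed families is closed.

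For condition (1) of the $n_\ast$-gap definition — that picking $X_i \in \Gamma_{S_i}$ forces $\bigcap_{i<n} X_i =^\ast \emptyset$ — the point is that the $\Gamma_{S_i}$ are in fact pairwise almost disjoint, indeed globally: I claim that if $X_0, \ldots, X_{n-1}$ are infinite and each $X_i$ is a comb, then $\bigcap_i X_i$ is finite whenever $n \geq 2$, regardless of the parameters. The cleanest route is to observe that a comb contains at most one element of each level that is $\prec$-minimal in a suitable sense — more robustly, an infinite comb, being either a chain or an antichain of a very rigid type, cannot share infinitely many nodes with an arbitrary second comb. Actually the safe argument here is simply that two distinct combs through $m^{<\omega}$ intersect finitely: if $X$ and $Y$ are combs and $X \cap Y$ is infinite, then $X \cap Y$ is an infinite subset of $X$, hence a comb first-move-equivalent to $X$, and likewise to $Y$, so $X$ and $Y$ have the same comb-type and, because combs of a fixed type through the tree are determined up to finite modification by their "trunk," $X =^\ast Y$; but in our gap we pick one $X_i$ from each of $n \geq 2$ families and a single infinite set cannot be an $(i,j)$-comb for two genuinely different pairs — here I may need to be slightly careful and instead argue level-by-level, but the conclusion $\bigcap_i X_i =^\ast\emptyset$ holds. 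This step I expect to be quick.

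The main obstacle, and the heart of the theorem, is condition (2): $\{\Gamma_{S_i}\}$ cannot be separated. Suppose toward a contradiction that $a_0, \ldots, a_{n-1} \subset m^{<\omega}$ separate them, i.e. $X \subset^\ast a_i$ for every $(i',j') \in S_i$ and every $(i',j')$-comb $X$, while $\bigcap_i a_i$ is finite. Fix any $(i_0, j_0) \in S_0$, say $i_0 \le j_0$ (the case $i_0 > j_0$ is symmetric, and the degenerate chain case $i_0 = j_0$ is covered too). The strategy is to build, inside $m^{<\omega}$, a single node $s$ lying simultaneously on combs of type $(i',j')$ drawn from each $S_i$ — more precisely, to show by a fusion/tree-search argument that the hypothesis "$X \subset^\ast a_i$ for all combs of the relevant types" propagates downward: for each $i$, every sufficiently high node $t$ has an extension $t' \geq t$ such that the appropriate local comb-pattern sits inside $a_i$ above $t'$. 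Since each $S_i$ is nonempty, pick $(i_k, j_k) \in S_k$ for each $k < n$; iterating the propagation we find a strictly increasing sequence of nodes and eventually a node $s$ (in fact cofinally many such $s$) belonging to all the $a_i$ simultaneously, contradicting $\bigcap_i a_i =^\ast\emptyset$. The delicate bookkeeping is ensuring that the "local comb pattern" forced into $a_i$ can be made to hit a \emph{common} node for all $i$ at once; this is exactly where one uses that combs, unlike chains, can be steered through any prescribed region of the tree — given any node $t$ and any target pair $(i', j')$, there is an $(i',j')$-comb passing through $t$ — so the constraints from different $a_i$'s do not conflict. I would model this after the hint given in the excerpt for why $\mathcal{S}^n$ is not countably separated (the Baire-category step there is replaced here by the hypothesis of separation applied to a rich enough family of combs), and after the non-separation argument in \cite[Theorem 6]{multiplegaps}, adapting it from $i$-chains to $(i,j)$-combs.
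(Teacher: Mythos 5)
There is a genuine mismatch between what you prove and what the theorem claims. The statement is that the families $\Gamma_{S_i}$ are not \emph{countably} separated (this is what makes the resulting gaps \emph{strong}), whereas the core of your argument assumes a separation by a single tuple $a_0,\ldots,a_{n-1}$ and derives a contradiction. That only establishes non-separation, which is strictly weaker; the whole point of this theorem, as used later (Theorem~\ref{standardbelowstrong} and the finite basis for \emph{strong} gaps), is the strongness. You even flag this yourself at the end: you say you will follow the hint for why $\mathcal{S}^n$ is not countably separated but with ``the Baire-category step there replaced by the hypothesis of separation'' --- but the Baire-category step is exactly the part that cannot be dropped. The correct argument must start from an arbitrary countable family $C$ that putatively countably separates $\{\Gamma_{S_i}\}$: fixing $(i_k,j_k)\in S_k$, one attaches to every branch $x\in m^\omega$ combs of type $(i_k,j_k)$ climbing along $x$, obtains for each $x$ sets $c_0,\ldots,c_{n-1}\in C$ with empty intersection containing suitable tails of these combs, uses a Baire category argument to stabilize the $c_i$'s and the finite error on a basic open set of branches, and then steers combs of each prescribed type through a common node in that region to contradict $\bigcap_i c_i=\emptyset$. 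Your ``propagation'' idea for steering combs through prescribed nodes is the right ingredient for the final step, but without quantifying over a countable $C$ and the category argument you are proving a different (weaker) theorem.

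Two smaller points. First, your paragraph on condition (1) of the $n_\ast$-gap definition addresses something the statement does not claim, and the claim you make there is false as stated: two combs of the same type can have infinite intersection without being almost equal (e.g.\ any two infinite subsets of the branch $\{(i,i,\ldots,i)\}$ are $i$-chains, and one can choose them with infinite intersection and infinite symmetric difference); moreover, since the theorem does not assume the $S_i$ are disjoint, $\bigcap_{i<n}X_i=^\ast\emptyset$ can simply fail (take all $S_i$ equal and all $X_i$ the same comb). The paper deliberately postpones the gap conditions to the remark after the theorem, where $\bigcap_{i<n}S_i=\emptyset$ or pairwise disjointness is assumed, and uses the fact that sets of \emph{different} comb types meet in at most one point. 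Second, your closedness argument is essentially fine: being an $(i,j)$-comb is a condition on finitely many elements at a time (meets, $\prec$, first moves), so each $\Gamma_{S_i}$, taken hereditary, is closed in $2^{m^{<\omega}}$, and a finite union of closed families is closed.
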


The intersection of two sets of different types is always finite (indeed, either empty or a singleton). Therefore $\{\Gamma_{S_i} : i\in n\}$ is an $n_\ast$-gap when $\bigcap_{i<n}S_i=\emptyset$, and it is an $n$-gap when the sets $S_i$ are pairwise disjoint. 

\begin{thm}\label{standardbelowstrong}
If $\Delta$ is a strong $n_\ast$-gap, then there exist $\{S_i : i<n\}$ disjoint subsets of $n\times n$, such that $\{\Gamma_{S_i} : i\in n\} \leq \Delta$. Moreover, the sets $S_i$ can be chosen so that $(i,i)\in S_i$.
\end{thm}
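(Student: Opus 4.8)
The plan is to combine Theorem~\ref{criticalstronggap} with the Ramsey-theoretic input of Theorem~\ref{strongRamsey}, and then do a finite combinatorial clean-up to get the disjointness of the $S_i$'s. First I would apply Theorem~\ref{criticalstronggap} to the strong analytic $n_\ast$-gap $\Delta$, obtaining a one-to-one map $\phi\colon n^{<\omega}\To N$ with $\phi(\mathcal{S}^n_i)\subset\Delta_i$ for all $i<n$. The idea now is that the families $\phi^{-1}(\Delta_i^\perp)$ (pulled back to the tree) are analytic subsets of $2^{n^{<\omega}}$, so restricted to the first-move-equivalence classes of combs they are Baire-measurable colorings in the sense of Theorem~\ref{strongRamsey}: for each pair $(i,j)\in n\times n$ and each $k<n$, color an $(i,j)$-comb $X$ by whether or not $\phi(X)\in\Delta_k^\perp$. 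By finitely many successive applications of Theorem~\ref{strongRamsey} (one for each of the finitely many triples, passing to smaller and smaller copies $T\approx n^{<\omega}$ of the tree) I can find a single $T\subset n^{<\omega}$ with $T\approx n^{<\omega}$ such that, for every $(i,j)$ and every $k$, \emph{all} $(i,j)$-combs contained in $T$ agree on whether their $\phi$-image lies in $\Delta_k^\perp$. Replacing $n^{<\omega}$ by $T$ (which, being first-move-equivalent to $n^{<\omega}$, carries an isomorphic copy of the whole comb structure and of each $\mathcal{S}^n_i$), I may assume this homogeneity holds on all of $n^{<\omega}$.

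With this homogeneity in hand I would then define, for each $k<n$, the set $S_k\subset n\times n$ to be the set of pairs $(i,j)$ such that the $\phi$-image of an $(i,j)$-comb does \emph{not} lie in $\Delta_k^\perp$ (equivalently, meets some element of $\Delta_k$ in an infinite set). One checks that with this definition $\phi$ witnesses $\{\Gamma_{S_k} : k<n\}\leq\Delta$: condition~(1) of Definition~\ref{gaporder} holds because every element of $\Gamma_{S_k}$ is (a subset of) an $(i,j)$-comb with $(i,j)\in S_k$, and by the infinite-hereditary property of $\Delta_k$ together with the fact that a set in $\Gamma_{S_k}$ has infinite intersection with something in $\Delta_k$ one pushes it inside $\Delta_k$; while condition~(2) holds because if $X\in\Gamma_{S_k}^\perp$ then $X$ meets every $(i,j)$-comb with $(i,j)\in S_k$ only finitely, and since by fact~(2) about combs every infinite subset of $n^{<\omega}$ contains a comb, a standard argument shows $\phi(X)$ must be orthogonal to everything in $\Delta_k$. (The point $(k,k)\in S_k$ is automatic: $\phi$ maps $k$-chains, i.e. $(k,k)$-combs, into $\Delta_k$, so they are certainly not in $\Delta_k^\perp$.)

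The last task is to arrange the $S_k$ to be pairwise \emph{disjoint} rather than merely having empty total intersection, which is what the $n_\ast$-gap structure gives for free. Here I would use that $\{\Gamma_{S_k}:k<n\}$ must itself be a strong (in particular non-countably-separated, hence non-separated) $n_\ast$-gap-like object, and run a small finite optimization: if some pair $(i,j)$ lies in two of the $S_k$, one argues that removing it from one of them still leaves the pullback map a valid $\leq$-embedding, because an $(i,j)$-comb that was being placed in two different families $\Delta_k$, $\Delta_{k'}$ would force $\Delta_k$ and $\Delta_{k'}$ to be non-orthogonal on a common infinite set, and one can reassign consistently. Iterating over the finitely many pairs in $n\times n$ produces disjoint $S_k$ with $(k,k)\in S_k$ preserved.

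I expect the main obstacle to be precisely this last disjointification step, or rather the bookkeeping needed to guarantee that all the conditions (the $\leq$-embedding properties, the non-separation that makes $\{\Gamma_{S_k}\}$ a genuine strong gap, and the normalization $(k,k)\in S_k$) survive simultaneously; in particular one must make sure that when a pair is deleted from one $S_k$ the family $\Gamma_{S_k}$ does not shrink so much that $\phi$ stops witnessing $\phi(\Delta$-side$)$, and that the resulting system is still not countably separated. The Ramsey step itself is essentially a citation of Theorem~\ref{strongRamsey} applied finitely many times, and the verification of Definition~\ref{gaporder}(1)--(2) is routine once the comb-homogeneity is available; the delicate part is the finite combinatorics reconciling disjointness with the embedding, which is the heart of what distinguishes this statement from the weaker Theorem~\ref{criticalstronggap}.
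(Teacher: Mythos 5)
Your overall strategy is the paper's: apply Theorem~\ref{criticalstronggap} to get $\phi\colon n^{<\omega}\To N$, homogenize over all triples $(i,j,k)$ by finitely many applications of Theorem~\ref{strongRamsey}, and read the sets $S_k$ off the homogeneous colours. But your execution has a genuine gap at condition (1) of Definition~\ref{gaporder}. You colour an $(i,j)$-comb $X$ by whether $\phi(X)\in\Delta_k^\perp$, put $(i,j)$ into $S_k$ when the answer is negative, and then claim that infinite-hereditarity ``pushes'' $\phi(X)$ into $\Delta_k$ because it meets some member of $\Delta_k$ in an infinite set. That inference is false: hereditarity closes $\Delta_k$ under infinite subsets, not under supersets, so $\phi(X)\notin\Delta_k^\perp$ only yields an infinite subset of $\phi(X)$ lying in $\Delta_k$, not $\phi(X)\in\Delta_k$ itself. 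The paper colours by membership in $\Delta_k$ (equally Baire-measurable, since $\Delta_k$ is analytic), and with $S_k$ the set of pairs whose combs inside the homogeneous copy are sent into $\Delta_k$, condition (1) is immediate; condition (2) then follows by running your hereditarity argument in the correct direction: if $x\perp\Gamma_{S_k}$ and $\phi(x)$ met some $y\in\Delta_k$ in an infinite set, then $x$ would contain a comb $c$ with $\phi(c)\subset y$, hence $\phi(c)\in\Delta_k$ by hereditarity, forcing the kind of $c$ into $S_k$ and contradicting $x\in\Gamma_{S_k}^\perp$. (Your colouring can be salvaged, but only by a second round of Ramsey applications with the $\Delta_k$-membership colouring, not by the upward-closure shortcut.)

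The second problem is the disjointification, and here your own worry is exactly the fatal point: if $(i,j)\in S_k\cap S_{k'}$ and you delete $(i,j)$ from $S_k$, then every $(i,j)$-comb lands in $\Gamma_{S_k}^\perp$ (combs of distinct kinds have finite intersection) while its image is not in $\Delta_k^\perp$, so condition (2) breaks, and no reassignment of the pair to $S_{k'}$ repairs it; moreover in an $n_\ast$-gap the families $\Delta_k$, $\Delta_{k'}$ need not be orthogonal, so the non-orthogonality contradiction you invoke is not available. The reason the paper's sketch has no analogue of your ``finite optimization'' is that in the basic situation where the $\Delta_k$ are pairwise orthogonal, disjointness of the maximal sets $S_k$ is automatic: an infinite set cannot belong to two mutually orthogonal families, so no pair $(i,j)$ can receive the colour ``image in $\Delta_k$'' for two distinct $k$; and $(k,k)\in S_k$ is forced because Theorem~\ref{criticalstronggap} already sends all $k$-chains into $\Delta_k$, which pins down the homogeneous colour of the $(k,k)$-combs. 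So the delicate step is not a combinatorial reshuffling of the $S_k$ after the fact, but choosing the right colouring so that the homogeneous data directly produce disjoint sets witnessing both clauses of Definition~\ref{gaporder}.
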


Theorem~\ref{minimalstronggaps} follows from these results, as there are only finitely many choices for sets $\{S_i : i<n\}$. Theorem~\ref{standardbelowstrong} is proven combining Theorem~\ref{criticalstronggap} and Theorem~\ref{strongRamsey}: We start with an injective function $u:n^{<\omega}\To N$ given by Theorem~\ref{criticalstronggap} and then for every $i,j,k$ we color the $(i,j)$-combs $X$ into two colors depending whether $u(X)\in \Delta_k$ or $u(X)\not\in\Delta_k$, and we successively apply Theorem~\ref{strongRamsey}, so that at the end, we get a restriction of $u$ to a set $A\approx n^{<\omega}$ that witnesses that $\{\Gamma_{S_i} : i\in n\}|_A \leq \Delta$.\\

We arrive to the conclusion that, if we are interested in studying properties of strong analytic $n_\ast$-gaps that can be reduced using the order $\leq$, then we must study gaps of the form $\{\Gamma_{S_i} : i<n\}$ made of combs as above. We should understand in particular when we have $\{\Gamma_{S_i}:i<n\}\leq \{\Gamma_{S'_i}:i<n\}$ for different families $S_i$ and $S'_i$ of combs. This requires understanding what kind of transformations $\varepsilon:n^2\To m^2$ are induced by one-to-one functions $\phi:n^{<\omega}\To m^{<\omega}$, in the sense that $\phi$ is a one-to-one map that sends $(i,j)$-combs to $\varepsilon(i,j)$-combs.

\begin{thm}
For every function $\phi:n^{<\omega}\To m^{<\omega}$ there exists $T\subset n^{<\omega}$ such that $T\approx n^{<\omega}$ and $\phi(A)\approx \phi(B)$ whenever $A\cup B\subset T$ and $A\approx B$.
\end{thm}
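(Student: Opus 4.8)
The plan is to iterate the Ramsey‑type Theorem~\ref{strongRamsey}, once for each first‑move‑equivalence class of finite subsets of $n^{<\omega}$, and then fuse the resulting sets into a single $T$. The colouring used at the $r$‑th step will record the first‑move‑equivalence class of the image $\phi(A)$, and the conclusion will be read off by noting that two first‑move‑equivalent $A,B\subseteq T$ belong to the family stabilised at the step indexed by their common class.

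First I would reduce to colourings with finitely many colours. The combinatorial heart of this reduction is that for each $k$ there are only finitely many $\approx$‑classes among subsets of $n^{<\omega}$ of size $\le k$: conditions (1)--(3) in the definition of first‑move‑equivalence force a first‑move‑equivalence between finite sets to be an isomorphism of the finite trees $\langle\langle A\rangle\rangle$ and $\langle\langle B\rangle\rangle$ (since $r\le s\iff r\wedge s=r$ is preserved by (1)) that respects the labelling of edges by first moves (by (3)), respects the well‑order $\prec$ (by (2)), and carries $A$ onto $B$; since $|\langle\langle A\rangle\rangle|\le 2k$, there are only finitely many possibilities for these data, and the same holds in $m^{<\omega}$. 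Fixing representatives $A_0^{(1)},A_0^{(2)},\dots$ of all $\approx$‑classes of finite subsets of $n^{<\omega}$, I then let $c_r$ be the colouring of $\{A\subset n^{<\omega}:A\approx A_0^{(r)}\}$ sending $A$ to the $\approx$‑class of $\phi(A)=\{\phi(s):s\in A\}\subset m^{<\omega}$; by the above this has finitely many colours, and it is Baire‑measurable because $\{A:A\approx A_0^{(r)}\}$ and each colour class consist of finite sets and hence are $F_\sigma$ (countable unions of closed singletons) in $2^{n^{<\omega}}$.

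Next I would apply Theorem~\ref{strongRamsey} successively: starting from $T_0=n^{<\omega}$, given $T_{r-1}\approx n^{<\omega}$, take $T_r\subseteq T_{r-1}$ with $T_r\approx n^{<\omega}$ on which $c_r$ is constant on the $A$'s with $A\approx A_0^{(r)}$; monotonicity ensures $c_1,\dots,c_{r-1}$ stay constant on $T_r$ as well. To finish it suffices that $T:=\bigcap_r T_r$ still satisfy $T\approx n^{<\omega}$, since then any $A,B\subseteq T$ with $A\approx B$ represent some common class $A_0^{(r)}$, both lie in $T_r$, and constancy of $c_r$ yields $\phi(A)\approx\phi(B)$. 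To guarantee $\bigcap_r T_r$ is a copy of $n^{<\omega}$ I would run the usual Milliken‑style fusion, arranging at stage $r$ that $T_r$ agrees with $T_{r-1}$ on its first $r$ levels, so that $T$ is built level by level and the coherent union of the stabilised initial parts is again first‑move‑equivalent to $n^{<\omega}$.

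The main obstacle is exactly this fusion. Theorem~\ref{strongRamsey} as stated only produces \emph{some} monochromatic copy $T_r$, with no control over its initial levels, whereas the fusion needs a localised version in which one may prescribe a finite initial subtree and still monochromatise above it. Such a refinement is a standard feature of the proof of Milliken's theorem (equivalently, of the derivation of Theorem~\ref{strongRamsey} from Theorem~\ref{weakRamsey} mentioned after its statement), so the genuine content of the argument is Step~1 --- the finiteness of the colour set, which is what makes Theorem~\ref{strongRamsey} applicable at all --- together with the bookkeeping that organises the countably many applications into one fusion.
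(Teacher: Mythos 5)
Your argument, as written, only establishes the conclusion for \emph{finite} sets $A\approx B$, and that is a genuine gap: the statement has no finiteness restriction, and its use in the paper (turning an arbitrary $\phi$ into one that sends $(i,j)$-combs to combs, so that the gaps $\{\Gamma_{S_i}\}$ can be transported) is precisely about infinite sets. Your list $A_0^{(1)},A_0^{(2)},\dots$ enumerates only the $\approx$-classes of finite subsets, so an infinite pair $A\approx B\subset T$ is never ``represented by some common class $A_0^{(r)}$''; and one cannot simply add infinite $A_0$'s to the list, since then the colouring by the $\approx$-class of $\phi(A)$ no longer has finitely many colours. Nor does the finite case formally imply the infinite one with your colouring: knowing that $\phi(A')\approx\phi(B')$ for corresponding finite pieces $A'\subset A$, $B'\subset B$ does not let the (unique, $\prec$-preserving) equivalences between the images cohere into a single equivalence between $\phi(A)$ and $\phi(B)$, because your colour records only the shape of the image set and forgets \emph{how} $\phi$ acts elementwise --- which elements collapse, the relative $\prec$-order of $\phi(a)$ and $\phi(a')$, and the meet/first-move pattern among the images. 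The repair is to colour each finite $A$ (ordered by $\prec$) by this \emph{labelled} pattern of $\phi\restriction A$ rather than by the class of $\phi(A)$; this is still a finite set of colours for each fixed size, and once the labelled pattern depends only on the $\approx$-class of $A$ for subsets of size at most $4$, the coherent map $\phi(a)\mapsto\phi(b)$ (for $a\mapsto b$ under the equivalence $A\to B$) is well defined and extends to the meet-closures as a first-move-equivalence, since every required condition (equalities, $\prec$, meets, first moves) involves at most four points of $A$. This handles finite and infinite $A,B$ simultaneously.

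A second, lesser issue is the fusion: Theorem~\ref{strongRamsey} as stated gives no control over initial levels, so your level-by-level intersection needs an end-extension (``prescribed finite initial subtree'') strengthening that is not in the paper. You acknowledge this, and such a localisation is indeed standard in proofs of Milliken's theorem, so I would not count it as a gap by itself; but note that with the labelled colouring of the previous paragraph only the finitely many classes of subsets of size at most $4$ need to be stabilised, so finitely many nested applications of Theorem~\ref{strongRamsey} suffice and the fusion disappears altogether. (The paper itself gives no proof of this theorem, referring to \cite{stronggaps}, so your proposal is judged on its own terms; in \cite{stronggaps} the analysis likewise goes through the elementwise behaviour of $\phi$ --- the data $e(\infty),e(0),\dots,e(n-1)$ --- rather than through image shapes alone, which is exactly the information your colouring drops.)
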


As a corollary, if $\varepsilon:n^2\To m^2$ is induced by some injective function $\phi:n^{<\omega}\To m^{<\omega}$, then we can suppose that $\phi$ satisfies that $\phi(A)\approx \phi(B)$ whenever $A\approx B$. All the relevant information about a function $\phi$ satisfying this regularity property is determined by the $\approx$-equivalence class of the family $$\{\phi(\emptyset),\phi(0),\phi(1),\ldots,\phi(n-1)\}.$$
For technical reasons, the behavior of $\phi$ is better studied by looking at a \emph{normalization} of this family to a single level, like
$$\{\phi(\emptyset),\phi(0)|_m,\phi(1)|_m,\ldots,\phi(n-1)|_m\}$$ 
where $m=|\phi(n-1)|$. This family is renamed in \cite{stronggaps} as $$\{e(\infty),e(0),e(1)\ldots,e(n-1)\}.$$ A non-degeneration argument allows to suppose that $e(i)\neq e(j)$ if $i\neq j$. From this family, we can recover all the information needed about $\phi$ because we have that $\varepsilon(i,j) = (u,v)$ if
\begin{itemize}
\item either $i\neq j$, $t = e(i)\wedge e(j)$, $e(i) \geq t^\frown u$ and $e(j) \geq t^\frown v$,
\item or $i=j$, $t = e(\infty)\wedge e(i)$, $e(i)\geq t^\frown u$, $e(\infty)\geq t^\frown v$.
\end{itemize}

Conversely, each one-to-one function $e:\{\infty,0,1,\ldots,n-1\}\To m^{<\omega}$ where $|e(\infty)|<|e(0)|=|e(1)|=\cdots=|e(n-1)|$ is associated to a function $\phi$.\\

Let us illustrate the way of reasoning by looking at 2-gaps. There are four kind of combs in the dyadic tree: $(0,0)$, $(0,1)$, $(1,0)$ and $(1,1)$. We know that every analytic strong 2-gap contains one of the form $\{\Gamma_{S_0},\Gamma_{S_1}\}$ where $(i,i)\in S_i$. There are $3^2 = 9$ ways of distributing the other two combs $(0,1)$ and $(1,0)$ between $\Gamma_{S_0}$ and $\Gamma_{S_1}$. This provides a list nine 2-gaps such that every analytic strong 2-gaps contains one of them. However, some of these 9 gaps are comparable to others. There are six equivalence classes of minimal analytic strong 2-gaps, whose representatives are given in the following table of comb distributions:
 
\begin{center}
$
\begin{array}{|c|l|l|}
\hline
 & \Gamma_0  & \Gamma_1\\
\hline
1 & (0,0) , (0,1)         &  (1,1) , (1,0) \\
\hline
2 & (0,0)  & (1,1) \\
\hline
3 & (0,0)  & (1,1) , (0,1), (1,0)\\
\hline
3^\ast & (0,0) , (0,1) , (1,0) & (1,1)\\
\hline
4 & (0,0) & (1,1) , (1,0) \\
\hline
4^\ast & (0,0) , (0,1) & (1,1)\\
\hline
\end{array}
$ 
\end{center}

The gaps enumerated as $k$ and $k^\ast$ is because one is the permutation of the other. In order to check that this list is correct we need to check two things: first, that each of the three gaps that have been excluded from the list contain some gap from the list, and second that the gaps in the list are incomparable to each other. We check a particular case of each task as an illustration. The possibility $\tilde{S}_0 = \{(0,0),(1,0)\}$, $\tilde{S}_1 = \{(1,1)\}$ does not appear in the list: we check that the gap number $4^\ast$ is already below that gap. We need to find an injective map $\phi:2^{<\omega}\To 2^{<\omega}$ that takes combs from $S_0(4^\ast)$ to combs from $\tilde{S}_0$, combs from $S_1(4^\ast)$ to combs from $\tilde{S}_1$, and combs out of $S_0 \cup S_1(4^\ast)$ to combs out of $\tilde{S}_0\cup \tilde{S}_1$ (the last condition is necessary for the preservation of the orthogonals). We can suppose that $\phi$ satisfies $A\approx B \Rightarrow \phi(A)\approx \phi(B)$ and such a function is determined by its associated family $\{e(\infty),e(0),e(1)\}$. Consider the function $\phi$ for which $e(\infty) = (0)$, $e(0) = (11)$, $e(1) = (01)$. The explicit formula for this $\phi$ is
$$\phi(i_0,i_1,\ldots,i_k) = (0,1-i_0,0,1-i_1,\ldots,0,1-i_k)$$
and it is easy to check that $\phi(0,0)=(1,0)$, $\phi(1,1)=(1,1)$, $\phi(0,1) = (0,1)$, $\phi(1,0)=(0,1)$, and hence $\phi$ is as desired. Now, let us check that the gap number 3 is not below gap number 4. Suppose that there exists an injective function $\phi:2^{<\omega}\To 2^{<\omega}$ that witnesses that relation. We look at its associated $\{e(\infty),e(0),e(1)\}$. So let $t = e(0)\wedge e(1)$, $t^\frown i = e(0)$, $t^\frown j = e(1)$, $i\neq j$. Then $\phi$ takes $(0,1)$-combs to $(i,j)$-combs and $(1,0)$-combs to $(j,i)$-combs. It is impossible then that $\phi$ witness that gap 3 is below gap 4.\\

The technique of analyzing each possible injective function $\phi:n^{<\omega}\To m^{<\omega}$ by means of its associated set $\{e(\infty),e(0),e(1)\ldots,e(n-1)\}$ is quite effective and it allows to compute the list of minimal analytic strong $n$-gaps for every $n$. Each equivalence class is determined by certain parameters $(A,B,C,D,E,\psi,\mathcal{P},\gamma)$. In the case $n=3$, there are $4^6=4096$ ways of distributing the combs of $3^{<\omega}$ into three pairwise disjoint sets where $(0,0)\in S_0$, $(1,1)\in S_1$, $(2,2)\in S_2$. But there are only 31 (9 up to permutation) equivalence classes of minimal analytic strong 3-gaps. We refer to \cite{stronggaps} for further details.

\section{Record combinatorics of the $n$-adic tree and analytic gaps}\label{sectionweak}

The content of this section has some analogy to that of Section~\ref{sectionstrong}, but with additional difficulties, so we recommend the reader to understand Section~\ref{sectionstrong} first. We shall make use of all the notations about the $n$-adic tree introduced at the beginning of Sections~\ref{sectioncritical} and~\ref{sectionstrong}.

In the record combinatorics, we are interested in problems where the relevant structure is given by the meet operation $t\wedge s$ of two nodes $t,s$, the order $t\prec s$, and the record history from a node $t$ to a larger node $s>t$. By the record history we mean the sequence of nodes $$record(t,s) = \{t=t_0<t_1<\cdots<t_{k+1}=s\}$$ and the sequence of integers $$0\leq m_0<m_1<\cdots<m_k<n$$ such that $t_i^\frown m_i \leq s$ and $m_i = \max(t_{i+1}\setminus t_i)$ for all $i=0,\ldots,k$. That is, we start climbing up from $t$ to $s$ and we take note of each time that we reach an integer which is strictly larger than all the integers we saw before (each time that we \emph{break a record}). We are considering thus a finer structure than in the first-move case where only the first record $m_0$ was relevant to the structure.\\

More precise definitions: A set $A\subset n^{<\omega}$ is said to be record-closed if the following two properties hold:
\begin{enumerate}
\item $t\wedge s$ whenever $t\in A$ and $s\in A$,
\item $record(t,s)\subset A$ whenever $t\in A$, $s\in A$, $t<s$.
\end{enumerate}
The record-closure of $A$, denoted by $\langle A\rangle$ is the intersection of all record-closed sets that contain $A$. A bijection $f:A\To B$ is a record-equivalence  if it is the restriction of a bijection $f:\langle A\rangle\To \langle B\rangle$ such that for every $t,s\in \langle A\rangle$
\begin{enumerate}
\item $f(t\wedge s) = f(t)\wedge f(s)$
\item $f(t) \prec f(s)$ if and only if $t\prec s$
\item If $i\in n$ is such that $t^\frown i \leq s$, then $f(t)^\frown i \leq f(s)$.
\end{enumerate}

The sets $A$ and $B$ are called record-equivalent if there is a record-equivalence between them. In this case, we write $A\sim B$. A partition theorem analogous to that of Milliken holds in this context:

\begin{thm}\label{weakRamsey}
Fix a set $A_0\subset n^{<\omega}$. If we color the set $\{A\subset n^{<\omega} : A\sim A_0\}$ into finitely many colors in a Baire-measurable way, then there exists a set $T\subset n^{<\omega}$ such that $T\sim n^{<\omega}$ and all the sets from $\{A\subset T : A\sim A_0\}$ have the same color.
\end{thm}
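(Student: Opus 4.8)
The plan is to realise the record-closed subsets of the $n$-adic tree as the finite approximations of a topological Ramsey space in the sense of Todorcevic \cite{Ramsey}, and then to read the statement off from the abstract Ellentuck-type theorem for such spaces, namely that every subset of the space with the property of Baire is Ramsey, so that monochromatic copies of a prescribed type can be extracted by pigeonhole. (One can also derive the theorem from \cite[Theorem~1.5]{analyticmultigaps}, of which it is essentially a reformulation; what follows is the shape of a self-contained argument.) First we fix a canonical increasing sequence of ``record-levels'' of $n^{<\omega}$ and, for a copy $T\sim n^{<\omega}$, let its $m$-th approximation $r_m(T)$ be the record-closed piece of $T$ sitting below its $m$-th record-level; these approximations are exactly the finite record-closed sets record-isomorphic to $r_m(n^{<\omega})$, and the copies $T\sim n^{<\omega}$ carry the obvious refinement quasi-order $S\leq T$. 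The bookkeeping --- that $T$ is recovered from $(r_m(T))_m$, that the approximations of a sub-copy refine those of $T$, and that the structure is $\sigma$-closed under fusions --- is routine (Todorcevic's axioms \textbf{A.1}, \textbf{A.2}).

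The combinatorial heart is the one-step pigeonhole (axioms \textbf{A.3} and, for amalgamation, \textbf{A.4}): given $T\sim n^{<\omega}$, a finite record-closed piece already selected inside $T$, and a finite colouring of the ways to extend that piece by one further record-level inside $T$, there must be a sub-copy $S\leq T$ with the same selected piece on which this one-step extension is monochromatic. We prove this by induction on $n$. The outer skeleton of an extension step --- the meet-structure, the order $\prec$, and the \emph{first} record $m_0$ of each relevant pair of nodes --- is controlled exactly as in the first-move case, i.e. by Milliken's theorem \cite{Milliken} and hence by Theorem~\ref{strongRamsey}. The new feature is that one must also freeze the later records $m_1<m_2<\cdots$, which only become visible deep inside the fillers between consecutive record-levels; here we peel off the top symbol: the nodes of $n^{<\omega}$ all of whose increments are at most $n-2$ form a copy of $(n-1)^{<\omega}$ on which record-equivalence is the $(n-1)$-version, so the inductive hypothesis stabilises all records below $n-1$, while the positions of the records equal to $n-1$ are handled by one more application of Milliken. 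Interleaving these two steps and fusing the finitely many colourings --- one for each coordinate of the extension and each record-type occurring in $A_0$ --- yields the one-step pigeonhole.

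Once the axioms are verified, the abstract theorem of \cite{Ramsey} tells us that the space is a topological Ramsey space; in particular, viewing $\{A\subset n^{<\omega} : A\sim A_0\}$ as a subset of the Cantor cube $2^{n^{<\omega}}$ and intersecting with a colour class produces a Baire-measurable, hence Ramsey, set, so there is $T\sim n^{<\omega}$ with $\{A\subseteq T : A\sim A_0\}$ contained in a single colour. For finite $A_0$ one may instead use only the finite, Nash--Williams form of the combinatorial core and dispense with the full machinery; the Baire-measurable strength is needed only for infinite $A_0$, exactly as in the applications to analytic gaps. A final diagonalisation over the finitely many record-types appearing in $A_0$, followed by a fusion to make them all monochromatic simultaneously, finishes the argument.

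The main obstacle is precisely this one-step pigeonhole in the presence of the full record history. Milliken's theorem applied to $n^{<\omega}$ itself does not suffice, because a strong subtree controls only the first move across each gap between levels while the records hidden in the fillers remain arbitrary; one genuinely needs the nested induction on the record values above --- equivalently, a Halpern--L\"auchli-type argument run along the record-refined tree --- to stabilise them all at once. Everything else, the fusion turning a single-level pigeonhole into a full copy of $n^{<\omega}$ and the unfolding passing from clopen to Baire-measurable colourings, is standard once this core lemma is in hand.
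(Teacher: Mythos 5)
The paper itself does not prove Theorem~\ref{weakRamsey}: it only points to \cite[Section 1]{analyticmultigaps}, so there is no in-paper argument to compare yours with line by line. Judged on its own terms, your text is a strategy outline rather than a proof, and the two places where the actual content of the theorem lives are precisely the places you leave as assertions. First, the one-step pigeonhole. You acknowledge that Milliken's theorem only controls the first move across a gap, and that the later records hidden in the fillers are the new difficulty; but your proposed remedy --- restrict to the nodes with increments at most $n-2$ to invoke an inductive hypothesis for $(n-1)^{<\omega}$, handle the records equal to $n-1$ ``by one more application of Milliken'', and then ``interleave and fuse'' --- is exactly the step that needs a real argument. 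Milliken applied inside a copy $T\sim n^{<\omega}$ gives you no control over \emph{where} within a filler the later records $m_1<m_2<\cdots$ occur, and passing to a sub-copy changes which nodes are records at all; the claim that the two stabilisation procedures can be interleaved without destroying each other is not justified, and a Halpern--L\"auchli-type argument ``run along the record-refined tree'' is named but not given. Since you yourself identify this lemma as the combinatorial heart, the proof is missing its core.

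Second, even granting a topological Ramsey space whose elements are the full copies $T\sim n^{<\omega}$, the abstract Ellentuck-type theorem of \cite{Ramsey} yields homogeneity for definable families of \emph{elements of the space} (or, via \textbf{A.3} plus fusion, of its finite approximations). Theorem~\ref{weakRamsey} colors $\{A\subset n^{<\omega} : A\sim A_0\}$ for an arbitrary $A_0$, which for infinite $A_0$ not record-equivalent to $n^{<\omega}$ (e.g.\ a set of a single type $\tau$) is neither the space of copies nor one of its approximation levels. You would need either a two-sorted version of the abstract framework tailored to copies of $A_0$ inside copies of $n^{<\omega}$ --- whose axioms must then be verified separately --- or an argument that every $A\subset T$ with $A\sim A_0$ arises as the canonical trace of a full sub-copy of $T$, so that homogeneity for an induced coloring of copies transfers back. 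Your third paragraph silently conflates these two spaces, and the related issue of why metric Baire-measurability (the hypothesis in the statement, via the Cantor cube $2^{n^{<\omega}}$) suffices for the Ellentuck-topology notion of Ramsey is also waved through. So while the general shape (an Ellentuck-type theorem for a record-structured space, with a Milliken/Halpern--L\"auchli-flavoured pigeonhole) is the kind of argument the cited source carries out, what you have written does not yet constitute a proof of Theorem~\ref{weakRamsey}.
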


A proof of this theorem is found in \cite[Section 1]{analyticmultigaps}. We promised a proof of Theorem~\ref{strongRamsey} from Theorem~\ref{weakRamsey}. This is actually very simple: It is enough to consider the function $\psi:n^{<\omega}\To n^{<\omega}$ given by $\psi(s_0,s_1,\ldots,s_k) = (s_0,n-1,s_1,n-1,\ldots,s_k,n-1)$. This function has the property that if $A\approx B$, then $A\approx \psi(A)\sim \psi(B)\approx B$. It is now a straightforward excercise to deduce Theorem~\ref{strongRamsey} from Theorem~\ref{weakRamsey} using $\psi$.\\

Similarly as in the strong case, the $[i]$-chains that appeared in the critical $n$-gap can be reinterpreted now in the following way: A set $X\subset n^{<\omega}$ is an $[i]$-chain if and only if
$$ X\sim \{(i), (ii), (iii), (iiii),\ldots\}$$ 
The next step is to identify the anologous of the $(i,j)$-combs in the first-move combinatorics, that is, the minimal record-equivalence classes of infinite sets. This equivalence classes are not parametrized just by couples of integers $(i,j)$ but by more complicated beings that we call \emph{types}. A type $\tau = (\tau^0,\tau^1,\triangleleft)$ in $n^{<\omega}$ consists of two sets $\tau^0\subset n$ and $\tau^1\subset n$, together with a total order relation $\triangleleft$ on $\tau^0\times\{0\}\cup\tau^1\times\{1\}$ such that:
\begin{itemize}
\item $\tau^0\neq \emptyset$,
\item $\min(\tau^0) \neq \min(\tau^1)$,
\item $(k,i)\triangleleft (k',i)$ whenever $i\in\{0,1\}$ and $k<k'$,
\item $(\max(\tau^0),0)$ is the maximal element in the order $\triangleleft$.
\end{itemize}
Given a type $\tau$ where $\tau^0 = \{a_1^0<\ldots<a_p^0\}$ and $\tau^1 = \{a^1_1<\ldots<a^1_q\}$, consider
\begin{eqnarray*}
u^\tau &=& (a^0_1,\ldots,a^0_1, a^0_2,\ldots,a^0_2,\ \ldots\ , a_p^0,\ldots, a_p^0)\\
v^\tau &=& (a^1_1,\ldots,a^1_1, a^1_2,\ldots,a^1_2,\ \ldots\ , a_q^1,\ldots, a_q^1)
\end{eqnarray*}

where the number of times $r(k,i)$ that $a^i_k$ is repeated satisfies that $r(k,i) > 2^{r(k',j)}$ whenever $(a^i_k,i) \triangleright (a^j_{k'},j)$. A set $X$ is of type $\tau$ if
$$X \sim \{v^\tau, {u^\tau}^\frown v^\tau, {u^\tau}^\frown {u^\tau}^\frown v^\tau,\ldots\}$$
Again, these happen to be the minimal record-equivalence classes of infinite sets: on the one hand, every infinite subset of a set of type $\tau$ has also type $\tau$, and on the other hand every infinite set contains an infinite subset which is of type $\tau$ for some type $\tau$. We represent a type as two rows of integers between square-brackets, the lower sequence represents $\tau^0$, the upper sequence represents $\tau^1$ and the order $\triangleleft$ is read from left to right. Thus, $\tau = [^{23}{}_{1}{}^4{}_2]$ means that $\tau^0 = \{1,2\}$, $\tau^1=\{2,3,4\}$ and $(2,1)\triangleleft (3,1)\triangleleft (1,0)\triangleleft (4,1)\triangleleft (2,0)$. If there is only one row, it means that $\tau^1=\emptyset$ and the row represents $\tau^0$. When $\tau^1=\emptyset$, the sets of type $\tau$ are chains, otherwise they are antichains. The sets of type $[i]$ are precisely the $[i]$-chains appearing in the critical analytic $n$-gap. We denote by $\mathfrak{T}_n$ the set of all types in $n^{<\omega}$.\\

For $S$ a set of types in $n^{<\omega}$, let now $\Gamma_S$ be the family of all subsets of $n^{<\omega}$ which are of type $\tau$ for some $\tau\in S$. Notice that $\Gamma_S$ is a closed family, in particular analytic.

\begin{thm}
If $\{S_i : i<n\}$ are nonempty sets of types in $m^{<\omega}$, then the families $\{\Gamma_{S_i} : i\in n\}$ are not separated.
\end{thm}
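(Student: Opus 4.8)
I will argue by contradiction, reducing first to the case of single types. Fix $\tau_i\in S_i$ for each $i$. Since $\Gamma_{\{\tau_i\}}\subseteq\Gamma_{S_i}$, any separation of $\{\Gamma_{S_i}:i<n\}$ is in particular a separation of $\{\Gamma_{\{\tau_i\}}:i<n\}$, so it suffices to show the latter families are not separated. Suppose then that $b_0,\dots,b_{n-1}\subseteq m^{<\omega}$ satisfy $x\subset^\ast b_i$ for every set $x$ of type $\tau_i$, and $\bigcap_{i<n}b_i=^\ast\emptyset$; I will derive a contradiction by exhibiting an infinite set contained in $\bigcap_{i<n}b_i$.

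The main tool is a density lemma for the ``$\tau$-fans'' of the $m$-adic tree. For a type $\tau$ with $\tau^0=\{a_1<\dots<a_p\}$, $\tau^1=\{c_1<\dots<c_q\}$ and a node $s$, let $\mathrm{Fan}_\tau(s)$ be the set of nodes $s^\frown u^\frown v$ where $u$ runs over the finite words whose running-maxima set is exactly $\tau^0$ and $v$ over the finite words whose running-maxima set is exactly $\tau^1$ (with $v$ empty when $\tau^1=\emptyset$). This is precisely the set of nodes one may append when extending, by one element, a set of type $\tau$ whose current ``spine top'' is $s$. I claim: if $x\subset^\ast b$ for every set $x$ of type $\tau$, then for every node $t$ there is $s\ge t$ with $\mathrm{Fan}_\tau(s)\subseteq b$. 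To see this, greedily try to build an infinite set of type $\tau$, all of whose members lie outside $b$, starting above $t$; if the process never halts one obtains an infinite member of $\Gamma_{\{\tau\}}$ disjoint from $b$, contradicting $x\subset^\ast b$; otherwise it gets stuck at some spine top $s\ge t$, which means exactly that every admissible next element lies in $b$, i.e. $\mathrm{Fan}_\tau(s)\subseteq b$. The same reasoning relativizes: if we only allow elements taken from a fixed set $R$ that is rich enough to carry sets of type $\tau$ and is closed under the relevant extensions, we obtain $s\in R$ with $\mathrm{Fan}_\tau(s)\subseteq b\cap R$.

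Next I nest fans. Reorder the indices so that $D(\tau_{i_1})\ge\dots\ge D(\tau_{i_n})$, where $D(\tau):=\max(\tau^0\cup\tau^1)$. By the density lemma choose $s_1$ with $\mathrm{Fan}_{\tau_{i_1}}(s_1)\subseteq b_{i_1}$. One checks that whenever $D(\tau')\le D(\tau)$, the set $\mathrm{Fan}_\tau(s)$ is rich enough to carry sets of type $\tau'$, and that every such set built inside it stays inside $\mathrm{Fan}_\tau(s)$, as does the resulting $\tau'$-fan; granting this, apply the relativized density lemma with $R=\mathrm{Fan}_{\tau_{i_1}}(s_1)$ to find $s_2$ with $\mathrm{Fan}_{\tau_{i_2}}(s_2)\subseteq b_{i_2}\cap \mathrm{Fan}_{\tau_{i_1}}(s_1)\subseteq b_{i_2}\cap b_{i_1}$, and iterate to reach $\mathrm{Fan}_{\tau_{i_n}}(s_n)\subseteq\bigcap_{i<n}b_i$. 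Since every $\mathrm{Fan}_\tau(s)$ is infinite (there are infinitely many admissible words $u$, hence infinitely many nodes $s^\frown u^\frown v_0$ for a fixed $v_0$), this contradicts $\bigcap_{i<n}b_i=^\ast\emptyset$.

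The technical heart — and the step I expect to be the real obstacle — is the claim used in the iteration: that a $\tau$-fan has enough room to host sets of type $\tau'$ whenever $D(\tau')\le D(\tau)$, with the hosted $\tau'$-fans again contained in it. A $\tau$-fan is not a full subtree; it has the rigid shape ``$s$, then a word with running-maxima set $\tau^0$, then a word with running-maxima set $\tau^1$'', and the interplay between $\max\tau^0$ and $\max\tau^1$ (which may be very different, e.g. for a comb whose spine climbs high while its teeth are short runs of small letters) must be handled with care: one routes the spine and the teeth of the $\tau'$-set into the $\tau^0$-part or the $\tau^1$-part of the $\tau$-fan according to which of $\max\tau^0,\max\tau^1$ realizes $D(\tau)$, and one inserts suitable ``transitional'' blocks so that each tooth of the $\tau'$-set decomposes correctly as a word with running-maxima set $\tau^0$ followed by a word with running-maxima set $\tau^1$. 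This is exactly where the record combinatorics of the $m$-adic tree intervenes; I would carry it out after normalizing the types as in the discussion preceding the statement, reducing it to a finite bookkeeping of which record-words absorb which.
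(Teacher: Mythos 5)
Your overall plan (a density lemma above every node, followed by nesting the resulting structured sets according to $\max(\tau)$) is the natural generalization of the hint the paper gives for the non-separation of the critical gap $\mathcal{C}^n$ (the case where every $\tau_i$ is the chain type $[i]$); the paper itself does not prove the present theorem but refers to the cited papers. The reduction to single types $\tau_i\in S_i$ is fine. The genuine gap sits exactly where the general case differs from the critical one: your fan $\mathrm{Fan}_\tau(s)$ remembers only the two rows $\tau^0,\tau^1$ of the type and forgets its order component $\triangleleft$, which in the definition of type is encoded by relative block lengths (the condition $r(k,i)>2^{r(k',j)}$) and is part of the record-equivalence class. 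Consequently, in the non-halting branch of your greedy construction you only obtain an infinite set whose spine steps have record set $\tau^0$ and whose teeth have record set $\tau^1$; such a set need not be of type $\tau$ but of some other type $\sigma$ with the same rows and a different order, and the hypothesis ``$x\subset^\ast b$ for every $x$ of type $\tau$'' says nothing about it. For instance, in $2^{<\omega}$ the types $[_0{}^1{}_1]$ and $[^1{}_{01}]$ have the same rows, hence the same fans, so a greedy construction aimed at $[_0{}^1{}_1]$ may converge to a set of type $[^1{}_{01}]$ if $b$ only leaves available fan elements with the wrong proportions. So the density lemma is not established; and if instead you define ``admissible'' so that the limit really has type $\tau$, then getting stuck only places a length-constrained sub-fan inside $b$, not $\mathrm{Fan}_\tau(s)$, and the rest of the argument, which uses the full fan as the next universe $R$, no longer applies as written.

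The second problem is the nesting step, which you yourself flag as the unproven heart: the claim that the $\tau'$-fans arising inside a $\tau$-fan stay inside it whenever $D(\tau')\leq D(\tau)$ is false as stated, not merely unproved. Take $\tau=[^0{}_1]$ and $\tau'=[1]$ in $2^{<\omega}$, both with $D=1$, so your ordering does not exclude this configuration: $\mathrm{Fan}_{[^0{}_1]}(s)$ consists of the nodes $s^\frown w$ with $w$ beginning with $1$ and ending with $0$, while $\mathrm{Fan}_{[1]}(s')$ always contains nodes ending with $1$, so it is never contained in the former. Hence the iteration must be set up by relativizing (intersecting with the current universe and keeping track of which constrained extensions remain available) rather than by containment of full fans, and at that point the order component of the types has to be carried along explicitly throughout. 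A cleaner route with the paper's own toolkit would be to reduce to the critical gap: produce an injective $\phi:n^{<\omega}\To m^{<\omega}$ sending $[i]$-chains to sets of type $\tau_{\varepsilon(i)}$ for a suitable permutation $\varepsilon$ (a normal-embedding-type construction), and pull a putative separation $b_0,\ldots,b_{n-1}$ back through $\phi$; since non-separation is invariant under permuting the families, this contradicts the non-separation of $\mathcal{C}^n$, which is the one fact the paper does sketch.
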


If $X$ is of type $\tau$ and $Y$ is of type $\sigma$, $\tau\neq \sigma$, then $|X\cap Y|\leq 3$. Hence, if the $\bigcap_{i<n}S_i=\emptyset$, then $\{\Gamma_{S_i} : i<n\}$ is an analytic $n_\ast$-gap. If the ${S_i}$'s are pairwise disjoint, then $\{\Gamma_{S_i} : i<n\}$ is an analytic $n$-gap. Similarly as in the strong case, now the fact is that every analytic $n$-gap contains an $n$-gap made of types in the $n$-adic tree.

\begin{thm}\label{standardbelowgap}
If $\Delta$ is an $n_\ast$-gap, then there exist $\{S_i : i<n\}$ sets of types in $n^{<\omega}$, such that $\{\Gamma_{S_i} : i\in n\} \leq \Delta$. Moreover, the sets $S_i$ can be chosen so that $[i]\in S_i$.
\end{thm}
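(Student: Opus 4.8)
The plan is to mimic exactly the argument that was sketched after Theorem~\ref{standardbelowstrong} in Section~\ref{sectionstrong}, replacing the ingredients from the first-move combinatorics with their record-combinatorics analogues. We start with the critical analytic gap rather than the critical strong gap: by Theorem~\ref{criticalgap}, since $\Delta = \{\Delta_i : i<n\}$ is an analytic $n_\ast$-gap, there is a one-to-one function $\phi:n^{<\omega}\To M$ and a permutation $\varepsilon:n\To n$ with $\phi(\mathcal{C}^n_i)\subset\Delta_{\varepsilon(i)}$ for all $i<n$. After composing $\phi$ with the induced bijection $n^{<\omega}\To n^{<\omega}$ that reshuffles the coordinates by $\varepsilon^{-1}$ — note that this bijection is a record-equivalence of $n^{<\omega}$ with itself, so it preserves types and the relation $\sim$ — we may assume $\varepsilon$ is the identity, i.e. $\phi(\mathcal{C}^n_i)\subset\Delta_i$ for all $i$. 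In particular, since the $[i]$-chains are precisely the sets of type $[i]$, this already arranges $\phi$(every set of type $[i]$)$\,\in\Delta_i$, which is the ``moreover'' clause about $[i]\in S_i$.

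Next I would run the Ramsey stabilisation. The set $\mathfrak{T}_n$ of types in $n^{<\omega}$ is finite, and so is $n$. For each type $\tau\in\mathfrak{T}_n$ and each $k<n$, colour the collection $\{X\subset n^{<\omega} : X$ is of type $\tau\}$ with two colours according to whether $\phi(X)\in\Delta_k$ or $\phi(X)\notin\Delta_k$. One checks this colouring is Baire-measurable as a subset of $2^{n^{<\omega}}$ (the family of sets of a fixed type is closed, and $\Delta_k$ is analytic, hence Baire-measurable, so its preimage under the continuous-on-each-type map $X\mapsto\phi(X)$ is Baire-measurable; this is the only ``analytic genericity'' input). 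Applying Theorem~\ref{weakRamsey} finitely many times, once for each of the $|\mathfrak{T}_n|\cdot n$ colourings and intersecting the resulting sets — using that if $T'\sim n^{<\omega}$ and we apply the theorem inside $T'$ we may assume $T'=n^{<\omega}$ — we obtain a single set $T\subset n^{<\omega}$ with $T\sim n^{<\omega}$ such that for every type $\tau$ and every $k$, all subsets of $T$ of type $\tau$ get the same colour for the $(\tau,k)$-colouring. Fix a record-equivalence $h:n^{<\omega}\To T$; replacing $\phi$ by $\phi\circ h$ we may as well assume $T=n^{<\omega}$ itself, i.e. for each $\tau$ and each $k$ the truth value of ``$\phi(X)\in\Delta_k$'' depends only on $\tau$ and $k$, not on the particular $X$ of type $\tau$.

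Now define, for each $i<n$, the set of types $S_i = \{\tau\in\mathfrak{T}_n : \phi(X)\in\Delta_i$ for $X$ of type $\tau\}$; this is well-defined by the stabilisation. Then by construction $\phi(\Gamma_{S_i})\subset\Delta_i$ for each $i$. For the orthogonality half of Definition~\ref{gaporder}, suppose $Y\in\Gamma_{S_i}^\perp$; I must show $\phi(Y)\in\Delta_i^\perp$, i.e. $\phi(Y)\cap z=^\ast\emptyset$ for every $z\in\Delta_i$. Since $\Delta_i$ is infinitely hereditary and each $\Delta_j$ is orthogonal to the others in the gap $\Delta$, and since every infinite subset of $n^{<\omega}$ contains an infinite subset of some type $\tau$, it suffices to argue at the level of types: if $\tau\notin S_i$ then $\Gamma_{\{\tau\}}$ is orthogonal to $\Gamma_{S_i}$ (two sets of distinct types meet in at most $3$ points, and when $\tau$ is a chain-type one checks $[i]$-chains for $\tau=[i]$ force the comb/type geometry to separate), and the map $\phi$ sends an infinite $X$ of type $\tau$ to a set $\phi(X)$ which, by the stabilisation, lies in $\Delta_i$ iff $\tau\in S_i$; combining these one gets that a $\Gamma_{S_i}$-orthogonal set cannot have an infinite piece landing inside any $z\in\Delta_i$. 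Finally, since $[i]$ is a chain-type with $[i]\in S_i$, and one can check $[j]\notin S_i$ for $i\neq j$ is automatic (the $[j]$-chains are $\Delta_j$-members, hence $\Delta_i$-orthogonal, and $\phi$ sends them into $\Delta_j$, so they are not in $S_i$), the families $\{S_i\}$ are pairwise disjoint when $\Delta$ is an $n$-gap, and $\bigcap_i S_i=\emptyset$ in general, so $\{\Gamma_{S_i}:i<n\}$ is genuinely an ($n_\ast$-)gap as required.

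The main obstacle is the orthogonality-preservation step, i.e. verifying clause (2) of Definition~\ref{gaporder} rather than just the easy clause (1). The subtlety is that $\Gamma_{S_i}^\perp$ is much larger than ``the union of $\Gamma_{\{\tau\}}$ over $\tau\notin S_i$'': a set can be orthogonal to $\Gamma_{S_i}$ without containing any whole set of an excluded type, so one really has to pass through the structural fact that every infinite subset of the $n$-adic tree contains an infinite subset of some type, combined with the fact that the types genuinely realising membership in $\Delta_i$ are exactly those in $S_i$ after stabilisation, and that distinct types have finite intersection. Making this airtight — in particular checking that no ``mixed'' or irregular infinite set can sneak a tail into some $z\in\Delta_i$ while still being $\Gamma_{S_i}$-orthogonal — is where the real work lies; it is the $n_\ast$-gap analogue of the corresponding (unstated but implicit) verification behind Theorem~\ref{standardbelowstrong}, and I would expect the cleanest route is to first prove the companion result (the analogue of the theorem just before Theorem~\ref{standardbelowstrong}) that $\{\Gamma_{S_i}:i<n\}$ is itself a gap with well-understood orthogonals, and then feed that into the reduction.
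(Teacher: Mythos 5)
Your overall route is the paper's own: apply Theorem~\ref{criticalgap}, colour the sets of each type $\tau$ according to whether their image under $\phi$ lies in $\Delta_k$, stabilise by finitely many Baire-measurable applications of Theorem~\ref{weakRamsey}, pass to an equivalent copy of $n^{<\omega}$, read off the sets $S_i$ from the stable colours, and verify the two clauses of Definition~\ref{gaporder}. Also, the step you flag as ``the real work'' (clause (2)) closes more simply than you suggest: if $\phi(Y)\cap z$ were infinite for some $z\in\Delta_i$, choose inside $Y$ an infinite $Y''$ of some type $\tau$ with $\phi(Y'')\subset z$; infinite hereditarity of $\Delta_i$ gives $\phi(Y'')\in\Delta_i$, stabilisation gives $\tau\in S_i$, and this contradicts $Y\in\Gamma_{S_i}^\perp$. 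The finiteness of intersections of sets of distinct types plays no role in this verification.

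There is, however, a genuine error in your treatment of the permutation $\varepsilon$. The bijection of $n^{<\omega}$ that relabels the letters by $\varepsilon^{-1}$ is \emph{not} a record-equivalence and does not preserve types: records are defined through maxima ($m_i=\max(t_{i+1}\setminus t_i)$, ``breaking a record''), so they depend on the order of the integers $0,\ldots,n-1$, which a nontrivial permutation destroys. Concretely, in $2^{<\omega}$ the swap $0\leftrightarrow 1$ sends the $[1]$-chain with steps $x_{k+1}=x_k{}^\frown 1^\frown 0$ to a chain of type $[01]$, not to a $[0]$-chain; more globally, $\mathcal{C}^n_0$ is countably generated while $\mathcal{C}^n_1$ is not, so no bijection can interchange these families. (The relabelling trick is legitimate in the first-move setting, where $\mathcal{S}^n$ is symmetric --- which is why Theorem~\ref{standardbelowstrong} carries no permutation --- but it fails in the record setting.) Consequently you cannot reduce to $\varepsilon=\mathrm{id}$, and the literal form $[i]\in S_i$ you derive from this reduction is unattainable in general: take $\Delta_0$ to be the sets of type $[1]$ and $\Delta_1$ the sets of type $[0]$ in $2^{<\omega}$; any witnessing map may be assumed normal by Theorem~\ref{normalembedding}, and then the monotonicity $\max(\bar{\phi}[0])\leq\max(\bar{\phi}[1])$ forbids $\bar{\phi}[0]=[1]$ and $\bar{\phi}[1]=[0]$ simultaneously. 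What the stabilisation argument actually yields --- and what the paper intends, as its own gloss after the theorem says that every analytic 2-gap contains \emph{a permutation of} a gap with type $[0]$ on one side and type $[1]$ on the other --- is $[i]\in S_{\varepsilon(i)}$ for the permutation $\varepsilon$ furnished by Theorem~\ref{criticalgap}; the permutation cannot be absorbed the way you propose.
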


Analogous arguments to those that we exposed after Theorem~\ref{standardbelowstrong} show that Theorem~\ref{standardbelowgap} follows from Theorem~\ref{criticalgap} and Theorem~\ref{weakRamsey}.\\

There are eight types in the dyadic tree $2^{<\omega}$, namely $[0]$, $[1]$, $[01]$, $[^0{}_1]$, $[^{01}{}_1]$, $[^1{}_0]$, $[^1{}_{01}]$ and $[_0{}^1{}_1]$. Theorem~\ref{standardbelowgap} states that any analytic 2-gap contains a permutation of one which is obtained by putting sets of type $[0]$ on one side, sets of type $[1]$ on the other side, and distributing the rest of types in whatever way. That means a list of $2\cdot 3^6 = 1458$ gaps so that any analytic gap contains one of them. But there are actually only nine equivalence classes of minimal analytic 2-gaps:\\

\begin{center}
$
\begin{array}{|l|l|l|}
\hline
 & \Gamma_0  & \Gamma_1\\
\hline
1 & [0]         &  \text{all other types} \\
\hline
1^\ast &  \text{all other types} & [0] \\
\hline
2 & [0]  & [1]\\
\hline
2^\ast & [1]  & [0]\\
\hline
3 & [0]  & [1] , [01]\\
\hline
3^\ast & [1], [01]  & [0]\\
\hline
4 & [0] , [01] & [1] \\
\hline
5 & [0] & [1] , [01] , [^1 {}_0 {}_1]\\
\hline
5^\ast & [1] , [01] , [^1 {}_0 {}_1] & [0]\\
\hline
\end{array}
$ 
\end{center}

In order to refine the information provided by Theorem~\ref{standardbelowstrong} to obtain reduced lists of minimal gaps, or in general to obtain more precise facts about analytic gaps, it is necessary to understand when we have $\Gamma\leq\Delta$ for gaps given by types, and this means understanding what kind of functions $\bar{\phi}:\mathfrak{T}_n\To \mathfrak{T}_m$ are induced by injective functions $\phi:n^{<\omega}\To m^{<\omega}$ in the sense that $\phi$ sends sets of type $\tau$ to sets of type $\bar{\phi}\tau$.

\begin{thm}\label{normalembedding}
For every function $\phi:n^{<\omega}\To m^{<\omega}$ there exists $T\subset n^{<\omega}$ such that $T\sim n^{<\omega}$ and $\phi(A)\sim \phi(B)$ whenever $A\cup B\subset T$ and $A\sim B$.
\end{thm}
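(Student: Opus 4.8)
\textbf{Proof proposal for Theorem~\ref{normalembedding}.}

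The plan is to mimic the proof of the first-move version (the theorem preceding Theorem~\ref{strongRamsey} in spirit, $\phi(A)\approx\phi(B)$ whenever $A,B\subset T$, $A\approx B$), replacing $\approx$ by $\sim$ and Milliken's theorem by Theorem~\ref{weakRamsey}, and then to iterate the one-step stabilization over all record-equivalence classes. First I would observe that there are only countably many record-equivalence classes of finite subsets of $n^{<\omega}$: a finite record-closed set is determined up to $\sim$ by a finite amount of combinatorial data (the $\prec$-order of its nodes, the meet structure, and the records along each comparable pair), so $\{[A]_\sim : A\subset n^{<\omega}\text{ finite}\}$ is a countable set. Enumerate representatives $A_0, A_1, A_2, \dots$ of all these finite classes.

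The core is a single stabilization step. Fix a finite $A_0$ and consider the coloring of $\mathcal{A}=\{A\subset n^{<\omega}: A\sim A_0\}$ that assigns to $A$ the record-equivalence class of the finite set $\phi(A)$ (after passing to $\langle\phi(A)\rangle$; note $\phi(A)$ is finite since $A$ is, so its class is well-defined and there are only finitely many classes of sets of that cardinality, giving a genuinely finite set of colors). This coloring is Baire-measurable --- indeed the class of $\phi(A)$ depends only on finitely many coordinates of the characteristic function of $A$, so each color class is clopen in $2^{n^{<\omega}}$. By Theorem~\ref{weakRamsey} there is $T_{A_0}\sim n^{<\omega}$ such that all $A\subset T_{A_0}$ with $A\sim A_0$ have $\phi(A)$ in one fixed $\sim$-class. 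Now I would run this through the enumeration: having built $T_{A_{k-1}}\supset T_{A_k}$ (each $\sim n^{<\omega}$) stabilizing the classes of $\phi$ on $A_0,\dots,A_{k-1}$, apply the step inside $T_{A_{k-1}}$ for $A_k$. The subtlety is that a nested intersection of the $T_{A_k}$ need not remain $\sim$-equivalent to $n^{<\omega}$, so instead of intersecting I would diagonalize: enumerate $n^{<\omega}$ along $\prec$ and, using that each $T_{A_k}\sim n^{<\omega}$ is ``spread out'' (meets every cone and realizes every finite configuration cofinally), build $T\subset n^{<\omega}$ with $T\sim n^{<\omega}$ such that the portion of $T$ relevant to any given finite class $A_k$ eventually lies inside $T_{A_k}$ --- concretely, choose the nodes of $T$ recursively so that the ``tail'' past the $k$-th chosen node is contained in $T_{A_k}$. (This is the standard fusion/diagonalization trick for Milliken-type spaces; it works because the record-equivalence type of a finite subset of $T$ is determined by finitely many of its nodes, so only finitely much of $T$ matters for each $A_k$.)

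Finally, with this $T$ in hand: if $A,B\subset T$ and $A\sim B$, then $A$ and $B$ both belong to the class of some $A_k$, and for all but finitely many of the initial nodes both sit inside $T_{A_k}$ --- more carefully, by the diagonalization $T$ can be arranged so that \emph{every} $A\subset T$ with $A\sim A_k$ lies inside $T_{A_k}$ (take $T\subset T_{A_0}\cap\cdots\cap T_{A_k}$ from the $(k{+}1)$-st step onward and absorb the finitely many earlier nodes). Hence $\phi(A)$ and $\phi(B)$ lie in the same stabilized $\sim$-class, i.e. $\phi(A)\sim\phi(B)$, which is the conclusion. I expect the main obstacle to be the diagonalization in the second paragraph: making precise that one can pass to a single $T\sim n^{<\omega}$ absorbed into all the $T_{A_k}$ at once, since a naive countable intersection fails. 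This is exactly the kind of fusion argument that underlies Theorem~\ref{weakRamsey} itself, so it should go through by the same bookkeeping, but it is where the real work lies; everything else (countability of classes, Baire-measurability of the coloring via finite dependence, finiteness of the color set) is routine.
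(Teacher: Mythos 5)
Your overall strategy (stabilize, via Theorem~\ref{weakRamsey}, the behaviour of $\phi$ on each of the countably many shapes of finite configurations, then fuse) is the natural route, and the paper itself gives no proof to compare against (it refers to \cite{analyticmultigaps}); but as written your argument only yields the conclusion for \emph{finite} $A,B\subset T$. You enumerate only the finite record-equivalence classes and your coloring uses that $\phi(A)$ is finite, whereas the theorem is needed precisely for infinite sets (the types), and there are continuum many record-equivalence classes of infinite subsets, so they can only be reached as limits of finite configurations. That limit step is not automatic from what you stabilized: a record-equivalence, when it exists, is the unique $\prec$-order isomorphism of the closures, so to glue the equivalences $\phi(A_k)\sim\phi(B_k)$ (for $\prec$-initial segments $A_k\subset A$, $B_k\subset B$) into one equivalence $\phi(A)\sim\phi(B)$ you need them to cohere, i.e.\ the equivalence at stage $k+1$ must send $\phi(a_i)$ to $\phi(b_i)$. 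Knowing only that the bare $\sim$-class of $\phi(A)$ is constant on each shape does not give this, since $\phi$ is neither injective nor $\prec$-monotone and the new point $\phi(a_{k+1})$ may be inserted at different order positions on the two sides. The repair is to enrich the coloring: for $A=\{a_0\prec\cdots\prec a_k\}$ of a fixed shape, color by the isomorphism type of $\langle\phi(A)\rangle$ \emph{together with} the assignment $i\mapsto\phi(a_i)$ (including the collision pattern); this is still a finite palette, Theorem~\ref{weakRamsey} still applies, and then the infinite case follows by taking unions of the now coherent finite equivalences. (A minor slip: the color classes are not clopen --- they are countable sets of points of $2^{n^{<\omega}}$ --- but Baire measurability holds trivially for that very reason.)

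The second genuine problem is the fusion. Your parenthetical claim that $T$ can be arranged so that \emph{every} $A\subset T$ with $A\sim A_k$ lies inside $T_{A_k}$ cannot hold: once $T$ contains a single node chosen before stage $k$ that is outside $T_{A_k}$, there are sets of shape $A_k$ inside $T$ using that node. The correct bookkeeping is a fusion with parameters: build $T$ as an increasing union of finite pieces $F_0\subset F_1\subset\cdots$ together with a decreasing sequence $n^{<\omega}\sim T^{(0)}\supset T^{(1)}\supset\cdots$ with $F_k\subset T^{(k)}$, and at stage $k$ stabilize, inside $T^{(k)}$ and beyond $F_k$, the finitely many enriched colorings $A\mapsto[\phi(F\cup A)]$ for all $F\subset F_k$ and all shapes of bounded size of the new part $A$; this uses a relativized form of Theorem~\ref{weakRamsey} (below a fixed finite set), which is available but must be invoked explicitly. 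This is exactly the step you yourself flag as ``where the real work lies'', and the shortcut offered in its place does not go through as stated, so this part needs to be carried out rather than asserted.
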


The functions satisfying the regularity condition above are called normal embeddings. When $\phi$ is a normal embedding, we denote by $\bar{\phi}$ its action on types, so that $\phi$ sends sets of type $\tau$ to sets of type $\bar{\phi}\tau$. The situation is not as simple as in the strong case, and in particular, the equivalence class of $\{\phi(\emptyset),\phi(0),\ldots,\phi(n-1)\}$ does not determine the whole behavior of a normal embedding $\phi$. A number of facts about normal embeddings are proven in \cite{analyticmultigaps}. Just to get a flavor, for a type $\tau$ consider $\max(\tau) = \max(\tau^0\cup \tau^1)$ the largest integer appearing in the type $\tau$, then one can prove that if $\max(\tau)\leq \max(\sigma)$, then $\max(\bar{\phi}\tau) \leq \max(\bar{\phi}\sigma)$, and this is a basic fact needed in many computations. A variety of peculiar classes (top-combs, top$^2$-combs, chains...) and characteristics ($\max(\tau)$, $strength(\tau)$...) of types show up in studying this kind of combinatorics.\\

As an example of how things work, let us briefly explain why some of the types in $2^{<\omega}$ never appeared in the list of minimal 2-gaps. A type $\tau$ is a top-comb type if the second number from the right in its representation exists and lives in the upper row. Equivalently, we can say that $(\max(\tau^1),1)$ is the second-largest element of the order $\triangleleft$. A type $\tau$ dominates a type $\sigma$ if $\tau$ is a top-comb and $\max(\sigma)\leq \max(\tau^1)$. Domination is characterized in terms of normal embeddings as follows:

\begin{thm}\label{dominationthm}
Let $\tau_0,\tau_1$ be types in $m^{<\omega}$. The following are equivalent:
\begin{enumerate}
\item $\tau_{1}$ dominates $\tau_0$,
\item There exists a normal embedding $\phi:2^{<\omega}\To m^{<\omega}$ such that $\bar{\phi}[0] = \tau_0$, and $\bar{\phi}\sigma = \tau_1$ for all types $\sigma \neq [0]$.
\end{enumerate}
\end{thm}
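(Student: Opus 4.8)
The plan is to prove the two implications separately. Direction (1)$\Rightarrow$(2) is the constructive one: one exhibits an explicit $\phi:2^{<\omega}\To m^{<\omega}$ and checks that it is a normal embedding with the prescribed action on types. The guiding picture is that a top-comb type $\tau_1$ decomposes into a ``spine'', finitely many ``teeth'' hanging off it, and, at the very top of each tooth, the record $(\max(\tau_1^1),1)$ immediately followed by $(\max(\tau_1^0),0)$. One designs $\phi$ so that the all-zero branch $\{0^k:k<\omega\}$ --- and hence every $[0]$-chain --- is sent into a single tooth, and inside that tooth realizes $\tau_0$; this is possible precisely because the inequality $\max(\tau_0)\le\max(\tau_1^1)$ leaves enough ``vertical room'' below the top record of the comb. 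As soon as a node uses the letter $1$ --- so that the relevant set is one of the seven types $\neq[0]$ --- the map $\phi$ forces a record break that unfolds the full top-comb pattern of $\tau_1$.

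The remaining work in (1)$\Rightarrow$(2) is to verify, one type at a time, that each $\sigma\neq[0]$ is sent to a set of type exactly $\tau_1$ (and $[0]$ to a set of type $\tau_0$). The reason the seven types collapse to a single one is that a top-comb is absorbing for the record-combinatorics: the various ways of interleaving $0$'s and $1$'s above the spine all flatten out to the same record history once the image has been forced past $(\max(\tau_1^1),1)$.

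For (2)$\Rightarrow$(1) one is handed $\phi$ and must recover both halves of ``domination''. The easy half is $\max(\tau_0)\le\max(\tau_1)$: in the dyadic tree every type $\sigma\neq[0]$ has $\max(\sigma)=1\ge 0=\max([0])$, so the monotonicity of $\bar\phi$ with respect to $\max$ proven in \cite{analyticmultigaps} gives $\max(\tau_0)=\max(\bar\phi[0])\le\max(\bar\phi\sigma)=\max(\tau_1)$. Upgrading this to $\max(\tau_0)\le\max(\tau_1^1)$, and proving that $\tau_1$ is a top-comb, requires exploiting that $\bar\phi$ is simultaneously constant (equal to $\tau_1$) on all seven non-$[0]$ types. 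Here one tracks how $\phi$ must behave on the canonical record-closed sets witnessing the chains $\{0^k\}$ and $\{1^k\}$ and the antichain sets of types such as $[^1{}_0]$ and $[^{01}{}_1]$, and reads off from the record histories of the images that the common value $\tau_1$ can only be a type whose second-largest $\triangleleft$-element is $(\max(\tau_1^1),1)$, with $\max(\tau_0)$ lying below it. This is where the finer structure theory of normal embeddings from \cite{analyticmultigaps} enters --- in particular the distinction between the behaviour of $\bar\phi$ on top-comb and on non-top-comb types, and invariants such as $strength(\tau)$; one must be careful that the tempting guess ``$\phi$ preserves meet-spines'' is false.

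The step I expect to be the main obstacle is precisely this last one: deducing from ``seven types collapse to one'' the rigid conclusion that that one must be a top-comb. The explicit construction for (1)$\Rightarrow$(2) is laborious but routine once the anatomy of top-comb types is understood, and the $\max$-monotonicity half of (2)$\Rightarrow$(1) is immediate; essentially all the difficulty is concentrated in extracting the top-comb property from the mere existence of $\phi$.
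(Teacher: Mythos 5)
The paper itself does not prove this theorem: it only sketches the direction (1)$\Rightarrow$(2) in the special case $\tau_0=[0]$, $\tau_1=[_0{}^1{}_1]$ (fix $\{x_0,x_1,\ldots\}$ of type $\tau_1$ and put $\phi(s^\frown 1^\frown 0^k)=x_{n_s}{}^\frown 0^j$), together with the remark that the analogous construction over the non-top-comb $[^1{}_{01}]$ fails to collapse all types, and it refers to \cite{analyticmultigaps} for the actual proof. Your plan for (1)$\Rightarrow$(2) is the same idea, suitably generalized, but one point of your guiding picture should be corrected: the copy of $\tau_0$ is not realized ``inside a tooth, below the top record'' (a tooth is a finite path, there is no room there); it is planted \emph{above the end} of each tooth, one tooth per block $\{s^\frown 1^\frown 0^k : k<\omega\}$. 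The hypothesis $\max(\tau_0)\le\max(\tau_1^1)$ is used precisely so that the letters of the appended $\tau_0$-pattern never exceed the top record $\max(\tau_1^1)$ of the tooth, hence break no new record when an image node is reached from a spine meet through a whole tooth; that is what makes all seven types $\sigma\neq[0]$ collapse to $\tau_1$, and what fails when $\tau_1$ is not a top-comb. Even granting the picture, calling the verification ``routine'' hides the real work of checking record-equivalence (including the $\prec$/level bookkeeping, which is delicate because the extensions above a single tooth are unbounded in length).

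The genuine gap is in (2)$\Rightarrow$(1). What you actually establish is only $\max(\tau_0)\le\max(\tau_1)$, via the monotonicity of $\bar{\phi}$ with respect to $\max$. The two substantive conclusions of domination --- that $\tau_1$ is a top-comb and that $\max(\tau_0)\le\max(\tau_1^{1})$ --- are merely announced (``one tracks how $\phi$ must behave \ldots and reads off from the record histories''), with a list of tools from \cite{analyticmultigaps} but no argument, and you yourself identify this as the main obstacle. Note that $\max(\tau_1^1)$ can be strictly smaller than $\max(\tau_1)$, so the quoted monotonicity fact cannot yield the needed inequality; one has to couple the image of a $[0]$-chain with the image of a set of a type such as $[_0{}^1{}_1]$ built over the same nodes, and argue from the resulting nested record histories that the $\tau_0$-records of $\phi(\{0^k\})$ must occur below the top record of a tooth of $\tau_1$, which simultaneously forces the top-comb shape. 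As written, your converse direction is a plan rather than a proof, so the proposal does not yet establish the equivalence.
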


The types $[^{01}{}_1]$ and $[^1{}_0]$ dominate any other type in $2^{<\omega}$. So the above result shows that if one of these two types appears somewhere in the gap $\Gamma$, then either the gap number 1 or number $1^\ast$ from the list of minimals above will be below $\Gamma$. This means that the original list of $2\cdot 3^6 = 1458$ gaps can be refined to the $2\cdot 3^4 = 162$ gaps that exclude $[^{01}{}_1]$ and $[^1{}_0]$.

The idea behind Theorem~\ref{dominationthm} can be understood by analyzing the particular case when $\tau_0 = [0]$ and $\tau_1 = [_0{}^1{}_1]$. The normal embedding $\phi:2^{<\omega}\To 2^{<\omega}$ for these two types is constructed in the following way: Fix $\{x_0,x_1,x_2,\ldots\}$ a set of type $\tau_1 = [_0{}^1{}_1]$, and then define $\phi$ so that $$\phi(s^\frown 1^\frown 0^\frown \cdots^\frown 0) = x_{n_s}{}^\frown 0^\frown\cdots^\frown 0$$
(The number $n_s$ and the number of repetitions of 0 are defined inductively so that $\phi(t)\prec \phi(t')$ if $t\prec t'$). This satisfies that $\bar{\phi}[0]=[0]$ and $\bar{\phi}\sigma = [_0{}^1{}_1]$ for all other types. Notice that if we consider a similar construction for the non-top-comb type $[^1{}_{01}]$ instead, then we will obtain that $\bar{\phi}\sigma = [^1{}_{01}]$ for some types and $\bar{\phi}\sigma = [_0{}^1{}_1]$ for other types.

\section{Breaking gaps: jigsaws and clovers}\label{sectionbreak}

When we have a mutliple gap $\{\Gamma_i : i\in n\}$ we can consider its subgaps, that is, the gaps of the form $\{\Gamma_i : i\in A\}$ where $A\subset n$. We can also consider restrictions $\{\Gamma_i|_a : i\in A\}$ where $a\subset \omega$, $\Gamma_i|_a = \{x\in\Gamma_i: x\subset a\}$, that may be still gaps or may become separated. The behavior of these operations can be quite different for different kinds of gaps.

\begin{dfn}
Let $\Gamma = \{\Gamma_i : i<n\}$ be an $n$-gap, and let $B\subset n$. We say that $\Gamma$ is $B$-broken if there exists an infinite set $M\subset N$ such that $\{\Gamma_i|_M : i\in B\}$ is a gap, but $M\in\Gamma_i^\perp$ for $i\not\in B$.
\end{dfn}

Let us consider two examples to illustrate this concept.\\

\begin{enumerate}
\item First, consider the critical 3-gap $\mathcal{C}^3$ in $3^{<\omega}$. Can this 3-gap be $\{0,1\}$-broken? Yes, it is enough to take $M = 2^{<\omega}\subset 3^{<\omega}$. On the one hand, $M\in (\mathcal{C}^3_2)^\perp$ because a set of type $[2]$ intersects $2^{<\omega}$ in at most one point. On the other hand $\{\mathcal{C}^3_0|_{2^{<\omega}},\mathcal{C}^3_1|_{2^{<\omega}}\} = \mathcal{C}^2$ is still a gap.\\

\item Now, consider another example $\Delta = \{\Delta_i : i<3\}$ in $2^{<\omega}$, where $\Delta_0$ is the family of all sets of type $[0]$, $\Delta_1$ is the family of all sets of type $[1]$, and $\Delta_2$ is the family of all sets of type $[01]$. This time, the 3-gap $\Delta$ cannot be $\{0,1\}$-broken. Let us sketch a proof. Suppose that we have $M\subset 2^{<\omega}$ that \emph{breaks} $\Delta$. By Theorem~\ref{standardbelowgap}, we can find a gap $\Delta'\leq \{\Delta_0|_M, \Delta_1|_M\}$ of the form $\Delta' = \{\Gamma_{S_0},\Gamma_{S_1}\}$ where $S_0$ and $S_1$ ares sets of types with $[\varepsilon 0]\in S_0$, $[\varepsilon 1]\in S_1$ for some permutation $\varepsilon:2\To 2$. The fact that $\Delta'\leq \{\Delta_0|_M, \Delta_1|_M\}$ must be witnessed by a certain function $\phi:2^{<\omega}\To M\subset 2^{<\omega}$, which by Theorem~\ref{normalembedding} can be supposed to be a normal embedding. Then $\phi$ must send sets of type of $[\varepsilon 0]$ onto sets of type $[0]$, and sets of type $[\varepsilon 1]$ onto sets of type $[1]$. We mentioned the property that normal embeddings satisfy $$\max(\sigma)\leq \max(\tau) \Rightarrow \max(\bar{\phi}\sigma)\leq \max(\bar{\phi}\tau)$$ Hence $\varepsilon 0 = 0$ and $\varepsilon 1 = 1$. It is an easy exercise that if $\phi$ preserves sets of type $[0]$ and sets of type $[1]$, then it also preserves sets of type $[01]$, and this contradicts that $M=\phi(2^{<\omega})\in\Delta_3^\perp$.\\
\end{enumerate}

The argument for the first example shows something more general: the critical $n$-gap $\mathcal{C}^n$ can be $B$-broken for all $B\subset n$, just by taking $M=B^{<\omega}$. Since this happens to be a minimal $n$-gap, it is actually a \emph{jigsaw}, according to the following definition:

\begin{dfn}
We say that an $n$-gap $\Gamma$ is a jigsaw if for every $B\subset A\subset n$ and for every $M\subset N$, if $\{\Gamma_i|_M : i\in A\}$ is not separated, then it can be $B$-broken. 
\end{dfn}

Jigsaws are multiple gaps that can be broken \emph{everywhere and in every way}, and we have just shown that there are very natural analytic examples of them. The critical strong $n$-gap is also a jigsaw. Anoter example of analytic jigsaw which is moreover \textit{dense} is the minimal analytic $n$-gap $\{\Gamma_i : i<n\}$ in $n^{<\omega}$, where $\Gamma_i$ is the family of all sets of type $\tau$ with $\max(\tau)=i$. We highlight the definition of dense gap since it will be important in later discussions:

\begin{dfn}
An $n$-gap $\Gamma=\{\Gamma_i : i<n\}$ in $N$ is dense if every infinite subset of $N$ contains an infinite set from $\bigcup_{i\in n}\Gamma_i$.
\end{dfn}

When an $n$-gap is made of types in $m^{<\omega}$, density means that each type of $m^{<\omega}$ is included in some $\Gamma_i$.\\

Let us look now at the second example $\Delta$ that we proposed. In that case, the argument for showing that $\Delta$ cannot be $\{0,1\}$-broken does not generalize to other pairs. Indeed, the reader can try to check as an excercise that $\Delta$ can be $\{0,2\}$-broken and also $\{1,2\}$-broken. Is it possible to produce an $n$-gap which cannot be $B$-broken for any $B\subset n$, $|B|\geq 2$?. Such $n$-gaps are called \emph{clovers}, and yes, here is an example of a clover \cite[Proposition 19]{multiplegaps}: Consider $2^\omega = \bigcup_{i<n}X_i$ a decomposition of the Cantor set into Bernstein sets. Remember that a set $X\subset 2^\omega$ is called Bernstein if both $C\cap X$ and $C\setminus X$ are uncountable for every uncountable compact subset $C\subset 2^\omega$. Such sets can be constructed by transfinite induction, cf.\cite[Example 8.24]{Kechris}. Each point $x=(x_0,x_1,\ldots)\in 2^\omega$ can be identified with  the corresponding branch $b_x = \{(x_0,\ldots,x_{k-1}) : k<\omega\}\subset 2^{<\omega}$. The clover is then
$$\tilde{\Delta} = \{\tilde{\Delta}_i = \{b_x : x\in X_i\} : i<n\}.$$
This clover is not an analytic $n$-gap: the construction of Bernstein sets requires transfinite induction, which means a too complicated definition to be analytic. Can we construct an analytic clover? No, we cannot. All analytic $n$-gaps can be $B$-broken for \emph{many} subsets $B\subset n$. We do not have a precise meaning for the word \emph{many}, and it is actually an interesting problem to characterize those families $\mathcal{B}$ of subsets of $n$ for which there is an analytic $n$-gap which can be $B$-broken if and only if $B\in\mathcal{B}$. We need to improve our understanding of the \emph{record combinatorics} of the $n$-adic to solve such a question. In the case of 3-gaps we do have such a characterization, and says that the two examples that we presented at the beginning of this section are the only possibilities:

\begin{thm}\label{3break}
Let $\Gamma = \{\Gamma_i : i<3\}$ be an analytic 3-gap. Then $\Gamma$ can be $B$-broken for at least two out of the three sets $B\subset \{0,1,2\}$ of cardinality 2.
\end{thm}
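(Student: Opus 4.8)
By Theorem~\ref{standardbelowgap}, it suffices to prove the statement for $3$-gaps of the form $\Gamma = \{\Gamma_{S_0},\Gamma_{S_1},\Gamma_{S_2}\}$ where $S_0,S_1,S_2$ are pairwise disjoint sets of types in $3^{<\omega}$ with $[i]\in S_i$. Indeed, if $\{\Gamma_{S_i}:i<3\}\leq\Gamma$ via some $\phi:3^{<\omega}\To N$, and $\{\Gamma_{S_i}:i<3\}$ can be $B$-broken by some $M\subset 3^{<\omega}$, then one checks that $\phi(M)$ breaks $\Gamma$ along $B$: the condition $M\in\Gamma_{S_j}^\perp$ passes to $\phi(M)\in\Gamma_j^\perp$ by preservation of orthogonals, and a gap inside $\{\Gamma_{S_i}|_M:i\in B\}$ maps to a gap inside $\{\Gamma_i|_{\phi(M)}:i\in B\}$. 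So we have reduced to a finite combinatorial problem about distributions of the types of $3^{<\omega}$ into three boxes.

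First I would set up the breaking criterion in the combinatorial language. To $B$-break $\{\Gamma_{S_i}:i<3\}$ (say $B=\{j,k\}$, and $\ell$ the remaining index) we must produce $M\subset 3^{<\omega}$ such that (i) no set of type $\tau\in S_\ell$ has infinite intersection with $M$, equivalently $M$ contains no infinite subset of type in $S_\ell$, and (ii) $\{\Gamma_{S_j}|_M,\Gamma_{S_k}|_M\}$ is not separated. The natural candidates are sets of the form $M=\psi(3^{<\omega})$ for a normal embedding $\psi:3^{<\omega}\To 3^{<\omega}$ — or more flexibly $M=\psi(T)$ with $T\sim 3^{<\omega}$ — and here condition (ii) is guaranteed as soon as $\psi$ carries some pair of types with $[j]$-like and $[k]$-like roles into $S_j$ and $S_k$ respectively, while (i) asks that $\bar\psi$ avoids $S_\ell$ entirely. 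So the whole question becomes: for which pairs $\{j,k\}$ does there exist a normal embedding $\psi:3^{<\omega}\To 3^{<\omega}$ sending $[j]$ into a type of $S_j$, $[k]$ into a type of $S_k$, and no type into $S_\ell$? (When $\psi$ is built so that $\{\bar\psi[j],\bar\psi[k]\}$ together with the rest still forms a non-separated pair on the image — e.g. via a chain-preserving embedding — (ii) is automatic.)

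Next I would run the counting/pigeonhole. The types of $3^{<\omega}$ split by $\max(\tau)\in\{0,1,2\}$; write $U_s$ for the set of types with $\max = s$. A normal embedding satisfies $\max(\sigma)\leq\max(\tau)\Rightarrow\max(\bar\psi\sigma)\leq\max(\bar\psi\tau)$, so $\bar\psi$ is ``monotone on max-levels.'' The key observation is that there are cheap normal embeddings that land entirely inside a prescribed union of max-levels: for instance the embedding $s\mapsto$ (suitable inflation inside $B^{<\omega}$) used for the critical gap lands inside the types supported on $B$, and more refined embeddings (chain-type embeddings, the inflations used in the proof of Theorem~\ref{dominationthm}) let one steer $\bar\psi[j]$ and $\bar\psi[k]$ into any prescribed ``low enough'' types while keeping the range away from one designated max-level. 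Since $S_0,S_1,S_2$ partition the types and $[i]\in S_i$, a pigeonhole on which max-levels each $S_i$ contains, combined with the monotonicity, shows that at least two of the three pairs $\{j,k\}$ admit such a steering: the obstruction seen in example~$\Delta$ of Section~\ref{sectionbreak} (where $[01]\in S_\ell$ forces $\bar\psi$ to hit $S_\ell$ once $\bar\psi[j]=[j]$, $\bar\psi[k]=[k]$) can be arranged for \emph{at most one} choice of $\ell$, because it requires $S_\ell$ to ``sit between'' $S_j$ and $S_k$ in a way that is not simultaneously possible for two different middle indices.

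**Main obstacle.** The delicate step is the last one: showing the ``intermediate'' obstruction occurs for at most one index $\ell$. This needs a genuine structural input about normal embeddings of $3^{<\omega}$ — not just the $\max$-monotonicity but finer invariants (strength, top-comb vs.\ non-top-comb, the refined classification of Section~\ref{sectionweak}) — to rule out, for two distinct choices of $\ell$, that every normal embedding realizing the other two indices is forced through $S_\ell$. Concretely one must check, case by case over the (finitely many, but numerous) admissible partitions $(S_0,S_1,S_2)$, that the types $S_\ell$ could contain that would block the pair $\{j,k\}$ are incompatible across two values of $\ell$. I expect this to be the technical heart, handled by exploiting Theorem~\ref{normalembedding} to normalize, Theorem~\ref{dominationthm} and its surrounding lemmas to build the required embeddings, and the $\max$-level bookkeeping to drive the pigeonhole; the bound ``at least two out of three'' is exactly what survives after the single possible bad index is removed.
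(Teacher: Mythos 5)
Your reduction is sound, and it is in fact the paper's own first step: by Theorem~\ref{standardbelowgap} (equivalently, by passing to a minimal analytic $3$-gap) one may assume $\Gamma=\{\Gamma_{S_i}:i<3\}$ is made of types in $3^{<\omega}$, your check that $B$-breaking passes upward along $\leq$ is correct, and so is the criterion that a normal embedding whose action on types meets $S_j$ and $S_k$ while avoiding $S_\ell$ yields a $\{j,k\}$-breaking set. The genuine gap is the step you yourself flag as the main obstacle: the claim that the ``intermediate'' obstruction can occur for at most one index $\ell$. As written, this is a restatement of the desired conclusion rather than an argument. The only tools you invoke are the monotonicity $\max(\sigma)\leq\max(\tau)\Rightarrow\max(\bar\psi\sigma)\leq\max(\bar\psi\tau)$ and Theorem~\ref{dominationthm}, and these do not suffice to run your pigeonhole: deciding which types a normal embedding can be steered to hit or avoid requires the finer invariants of the record combinatorics (strength, top-combs, chains, etc.), and nothing in the $\max$-level bookkeeping alone rules out, a priori, a distribution $(S_0,S_1,S_2)$ for which two different indices $\ell$ each block their complementary pair. (Note also that the sets $S_i$ produced by Theorem~\ref{standardbelowgap} are pairwise disjoint with $[i]\in S_i$ but need not partition the $61$ types, so the counting premise of your pigeonhole is already inaccurate.)

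This missing step is exactly where the paper's proof does its real work: using the machinery of Section~\ref{sectionweak} one finitizes completely, computing the full list of minimal analytic $3$-gaps --- $933$ of them, $163$ up to permutation, out of the $4^{61}$ a priori distributions of types --- and then verifies the statement of Theorem~\ref{3break} for each member of the list (with some shortcuts possible). Your proposal follows the same strategy up to that point, but what you defer as ``case by case over the admissible partitions'' is precisely that finite verification, i.e.\ the actual content of the proof; without carrying it out, or replacing it by a genuinely new structural argument, the proof is incomplete.
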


The proof of a theorem like this runs as follows: we can suppose that $\Gamma$ is a minimal analytic $3$-gap, hence made of types. The 61 types of the 3-adic tree, we can be distributed into $4^{61}$ ways. Using the combinatorial techniques explained in Section~\ref{sectionweak} for the study of types, this can be reduced to a list of 933 minimal analytic gaps, only 163 if counted up to permutation. Finally, we just have to check that each of them satisfies the statement of Theorem~\ref{3break}. Of course, some shortcuts are possible, but this describes the general procedure of finitizing the problem. It was too much work already to try to find the list of minimal analytic 4-gaps, but playing a little bit with types, normal embeddings, etc. one can prove partial results like the following:

\begin{thm}\label{2break}
Let $\Gamma = \{\Gamma_i : i<n\}$ be an $n$-gap. Then, there exists $B\subset n$ of cardinality 2 such that $\Gamma$ can be $B$-broken.
\end{thm}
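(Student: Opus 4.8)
The plan is to pass to gaps made of types and then manufacture the breaking set as the image of a suitable normal embedding of $2^{<\omega}$.

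The first step is to notice that the property of being $B$-broken is monotone upwards for the order $\leq$ of Definition~\ref{gaporder}: if $\Gamma\leq\Delta$ is witnessed by an injection $\psi:N_\Gamma\to N_\Delta$ and $\Gamma$ is $B$-broken by an infinite set $M\subseteq N_\Gamma$, then $\Delta$ is $B$-broken by $\psi(M)$. Indeed $\psi\restriction M$ satisfies condition (1) of Definition~\ref{gaporder} for the restricted gaps, so that $\{\Gamma_i\restriction M:i\in B\}$ being non-separated forces $\{\Delta_i\restriction \psi(M):i\in B\}$ to be non-separated (a separation of the larger gap pulls back along $\psi$), while the orthogonality conditions $\psi(M)\in\Delta_i^\perp$ for $i\notin B$ are immediate from condition (2) of $\leq$. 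Hence, by Theorem~\ref{standardbelowgap}, it suffices to prove the statement for a gap of the form $\Gamma=\{\Gamma_{S_i}:i<n\}$, where the $S_i$ are pairwise disjoint sets of types in $n^{<\omega}$ with $[i]\in S_i$ for all $i$.

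The second step is the following reformulation. Fix a normal embedding $\phi:2^{<\omega}\to n^{<\omega}$ and set $M=\phi(2^{<\omega})$. Identifying $2^{<\omega}$ with $M$ via $\phi$, the family $\Gamma_{S_i}\restriction M$ becomes $\Gamma_{\bar\phi^{-1}(S_i)}$ with $\bar\phi^{-1}(S_i)=\{\sigma\in\mathfrak{T}_2:\bar\phi\sigma\in S_i\}$, and a set of type $\tau$ meets $M$ in an infinite set only when $\tau\in\bar\phi(\mathfrak{T}_2)$ (any infinite subset of $M$ contains an infinite set $\phi(Z)$ with $Z$ of some type $\sigma$, hence of type $\bar\phi\sigma$). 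Combining this with the theorem that finitely many nonempty families of types are never separated and with the fact that sets of distinct types meet in at most three points, one obtains: $\Gamma$ is $\{i,j\}$-broken by $M$ if and only if $\bar\phi(\mathfrak{T}_2)$ meets both $S_i$ and $S_j$ and is disjoint from every $S_k$ with $k\notin\{i,j\}$. Thus the whole problem reduces to exhibiting a normal embedding $\phi:2^{<\omega}\to n^{<\omega}$ for which $\bar\phi(\mathfrak{T}_2)$ meets exactly two of the classes $S_0,\dots,S_{n-1}$.

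The last step is to produce such a $\phi$ by cases. If there is a pair $\{i,j\}$ such that every type $\tau$ with $\tau^0\cup\tau^1\subseteq\{i,j\}$ that belongs to some $S_k$ in fact has $k\in\{i,j\}$, I take $\phi$ to be the inclusion $\{i,j\}^{<\omega}\hookrightarrow n^{<\omega}$ (a normal embedding after identifying $\{i,j\}^{<\omega}$ with $2^{<\omega}$ through $0\mapsto\min\{i,j\}$, $1\mapsto\max\{i,j\}$); then $\bar\phi(\mathfrak{T}_2)$ is precisely the set of types using only the digits $i,j$, which meets $S_i$ through $[i]$, meets $S_j$ through $[j]$, and by the choice of the pair meets no other $S_k$. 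In the opposite case, for every pair $\{i,j\}$ some type using \emph{both} digits $i$ and $j$ is homed outside $\{i,j\}$; this yields a supply of homed ``genuinely two-digit'' types, and — using the $\max$-monotonicity of normal embeddings together with Theorem~\ref{dominationthm} — one wants to locate a top-comb type $\tau_1\in S_j$ that dominates $[i]$ for some $i\neq j$ with $i\le\max(\tau_1^1)$; the normal embedding of Theorem~\ref{dominationthm} sending $[0]$ to $[i]$ and every other type to $\tau_1$ then has $\bar\phi(\mathfrak{T}_2)=\{[i],\tau_1\}$, meeting exactly $S_i$ and $S_j$. When no such dominating type happens to be homed, one descends further into the record combinatorics and builds an ad hoc ``digit-replacement'' normal embedding redirecting one of the homed two-digit chain types $[ij]$. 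The main obstacle is exactly this final case: showing that for \emph{every} distribution of the types among the $S_i$ there is a normal embedding concentrating $\bar\phi(\mathfrak{T}_2)$ on two classes. This forces one to invoke the finer structure theory of Section~\ref{sectionweak} — chain versus antichain types, top-combs, the invariants $\max(\tau)$ and $strength(\tau)$, and the characterization of domination — which is why the result is stated only as a partial one.
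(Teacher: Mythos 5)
Your first two steps are sound and set up exactly the right framework. Being $B$-broken is indeed upward monotone along the order $\leq$ of Definition~\ref{gaporder}: a separation of $\{\Delta_i|_{\psi(M)}:i\in B\}$ pulls back along the injection $\psi$, and the orthogonality requirements for $i\notin B$ are condition (2) of the definition; so Theorem~\ref{standardbelowgap} reduces the statement to gaps $\{\Gamma_{S_i}:i<n\}$ made of types with $[i]\in S_i$ and the $S_i$ pairwise disjoint. Your reformulation is also correct: for a normal embedding $\phi:2^{<\omega}\to n^{<\omega}$ and $M=\phi(2^{<\omega})$, a set of a type outside $\bar\phi(\mathfrak{T}_2)$ meets $M$ finitely, and nonempty families of types are never separated, so $M$ $\{i,j\}$-breaks the gap exactly when $\bar\phi(\mathfrak{T}_2)$ meets $S_i$ and $S_j$ and avoids all other $S_k$. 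The paper itself gives no proof of Theorem~\ref{2break} beyond saying it follows by playing with types and normal embeddings, so this framework is consistent with what is intended.

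The genuine gap is your third step, and you concede it yourself. After the reduction, the whole content of the theorem is the combinatorial claim that for \emph{every} admissible distribution of types into classes $S_0,\dots,S_{n-1}$ with $[i]\in S_i$ there is a normal embedding whose realized types $\bar\phi(\mathfrak{T}_2)$ meet exactly two classes. You verify this only in two special configurations: when some pair $\{i,j\}$ has all its two-digit types homed inside $\{i,j\}$ (digit-substitution embedding), and when some homed type is a top-comb dominating a suitable $[i]$ (Theorem~\ref{dominationthm}). The remaining case — for instance, when for every pair $\{i,j\}$ the only $\{i,j\}$-digit types homed outside $\{i,j\}$ are non-top-comb types such as the chain $[ij]$ or $[^j{}_{ij}]$, so neither device applies — is precisely where the difficulty lies, and the ``ad hoc digit-replacement normal embedding'' is neither constructed nor shown to exist; to build it one must control the images of all eight types of $2^{<\omega}$, not just the chain types, which requires the finer theory of normal embeddings ($\max$, strength, top-combs, etc.) that you only gesture at. Note also that the paper calls Theorem~\ref{2break} a partial result because it is weaker than a full characterization of the breakable sets $B$ (as obtained for $n=3$ in Theorem~\ref{3break}), not because an incomplete case analysis suffices; as written, your argument proves the theorem only for special distributions of types, not for an arbitrary analytic $n$-gap.
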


We have another general result about breaking gaps that follows immediately from the theory explained in Section~\ref{sectionweak}. For every $m<\omega$, let $J(m)$ be the number of types that exists in $m^{<\omega}$. Thus, $J(2)=8$, $J(3)=61$, etc. 

\begin{thm}\label{Jbreak}
For every analytic $n$-gap $\Gamma$ and every set $B\subset n$, there exists a set $B\subset C \subset n$ such that $|C|\leq J(|B|)$ and $\Gamma$ can be $C$-broken.
\end{thm}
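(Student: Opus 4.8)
The plan is to pass to the subgap $\Delta=\{\Gamma_i:i\in B\}$, pull a standard gap of types back beneath it, and then let $C$ consist of the indices that ``survive'' a record-theoretic Ramsey argument. We may assume $2\le|B|\le n$; write $b=|B|$ and identify $B$ with $b=\{0,\dots,b-1\}$. First I note that $\Delta$ is an analytic $b$-gap: orthogonality, heredity and condition~(1) in the definition of $n_\ast$-gap (which uses $b\ge 2$) are inherited from $\Gamma$, and $\Delta$ cannot be separated, since a separation of $\Delta$ would separate $\Gamma$ upon setting $a_i=N$ for $i\notin B$. By Theorem~\ref{standardbelowgap} there are a one-to-one $\phi:b^{<\omega}\To N$ and sets of types $\{S_i:i\in B\}$ in $b^{<\omega}$ with $[i]\in S_i$ such that $\phi$ witnesses $\{\Gamma_{S_i}:i\in B\}\le\Delta$. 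Since the $\Gamma_i$ are pairwise orthogonal and every $\phi(Y)$ is infinite, the $S_i$ are automatically pairwise disjoint; and for $i\in B$ a set of type $\sigma$ is sent by $\phi$ into $\Gamma_i$ \emph{if and only if} $\sigma\in S_i$: if $\sigma\notin S_i$ then, as distinct types meet in at most three points, every set of type $\sigma$ is orthogonal to $\Gamma_{S_i}$, so by condition~(2) of Definition~\ref{gaporder} its $\phi$-image lies in $\Gamma_i^\perp$ and, being infinite, cannot lie in $\Gamma_i$.

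Next I apply Theorem~\ref{weakRamsey}. For each $j\in n\setminus B$ and each type $\sigma\in\mathfrak{T}_b$, two-colour the sets $Y\subset b^{<\omega}$ of type $\sigma$ according to whether $\phi(Y)\in\Gamma_j$; this colouring is Baire-measurable because $Y\mapsto\phi(Y)$ is continuous and $\Gamma_j$ is analytic. Applying Theorem~\ref{weakRamsey} once for each of the finitely many pairs $(j,\sigma)$, and at each stage passing to a record-copy of $b^{<\omega}$ inside the set produced at the previous stage (which keeps the earlier colourings constant), I obtain $T\subset b^{<\omega}$ with $T\sim b^{<\omega}$ on which each of these colourings is constant. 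For $i\in n$ let $D_i$ be the set of those types $\sigma$ such that $\phi$ maps every set of type $\sigma$ lying inside $T$ into $\Gamma_i$; by the previous paragraph $D_i=S_i\neq\emptyset$ for $i\in B$, while the $D_j$ for $j\notin B$ are read off the colouring. If $\sigma\in D_i\cap D_{i'}$ with $i\neq i'$, a set $Y$ of type $\sigma$ in $T$ would satisfy $\phi(Y)\in\Gamma_i\cap\Gamma_{i'}$, which is impossible since $\Gamma_i\perp\Gamma_{i'}$ and $\phi(Y)$ is infinite; hence the $D_i$ are pairwise disjoint subsets of $\mathfrak{T}_b$. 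Put $C=\{i\in n:D_i\neq\emptyset\}$, so that $B\subset C$ and $|C|\le|\mathfrak{T}_b|=J(b)$.

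It remains to check that $\Gamma$ is $C$-broken, with $M:=\phi(T)$. If $j\notin C$ and some $x\in\Gamma_j$ met $M$ infinitely, then $\phi^{-1}(x)\cap T$ would be an infinite subset of $T$, hence would contain an infinite set $Y$ of some type $\sigma$; since $\phi(Y)\subset x\in\Gamma_j$ and $\Gamma_j$ is infinitely hereditary, $\phi(Y)\in\Gamma_j$, i.e.\ $\sigma\in D_j$, contradicting $D_j=\emptyset$. Thus $M\in\Gamma_j^\perp$ for every $j\notin C$. For the families indexed by $C$: orthogonality and heredity are inherited, and condition~(1) holds since $|C|\ge 2$. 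For non-separation I transport the question to $T$ through the bijection $\phi|_T$: each set of type in $D_i$ lying inside $T$ is mapped into $\Gamma_i$, so $\Gamma_i|_M$ contains $\phi(\Gamma_{D_i}|_T)$; through the record-equivalence $T\sim b^{<\omega}$, the family $\{\Gamma_{D_i}|_T:i\in C\}$ corresponds to $\{\Gamma_{D_i}:i\in C\}$, and since the $D_i$ are non-empty this family is not separated, by the (unnumbered) theorem stated just before Theorem~\ref{standardbelowgap}. As enlarging the families preserves non-separation, $\{\Gamma_i|_M:i\in C\}$ is not separated; hence it is a gap, and $\Gamma$ is $C$-broken.

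I expect the non-separation clause to be the only genuine difficulty. It is tempting to deduce it from the already established non-separation of the smaller family $\{\Gamma_i|_M:i\in B\}$, but this fails: adding indices to a family makes separation \emph{easier}, not harder, and in fact, since $\Gamma_j|_M$ consists of finite sets only for every $j\notin C$, the full family $\{\Gamma_i|_M:i\in n\}$ is separated whenever $C\neq n$. The argument closes precisely because $C$ is taken to be \emph{exactly} the set of indices with non-empty colour class, so that the non-separated standard gap $\{\Gamma_{D_i}:i\in C\}$ sits below $\{\Gamma_i|_M:i\in C\}$; isolating this standard gap as the right object, together with the fact that standard gaps of types are never separated, is where the record combinatorics of the $b$-adic tree does the essential work. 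A minor technical nuisance, already mentioned, is to arrange the successive applications of Theorem~\ref{weakRamsey} so that the homogeneity obtained in earlier rounds is not destroyed.
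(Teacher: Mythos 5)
Your proposal is correct and follows essentially the same route as the paper's own sketch: pass to the subgap indexed by $B$, embed a gap of types of $|B|^{<\omega}$ beneath it (Theorem~\ref{criticalgap}/Theorem~\ref{standardbelowgap}), homogenize membership in the remaining $\Gamma_j$'s via Theorem~\ref{weakRamsey}, take $C$ to be the set of indices realized by some type (so $|C|\leq J(|B|)$), and let $M$ be the image of the homogeneous copy, with non-separation of $\{\Gamma_i|_M : i\in C\}$ coming from the fact that a standard gap of nonempty sets of types is never separated. The extra details you supply (disjointness of the $D_i$, the orthogonality check via homogeneity, transferring non-separation through $\phi|_T$) are exactly the steps the paper leaves implicit.
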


The proof would run as follows: Suppose, without loss of generality that $B=m=\{0,\ldots,m-1\}$. Apply Theorem~\ref{criticalgap} to the gap $\{\Gamma_i : i<m\}$, and get (perhaps after a permutation) an injective function $u:m^{<\omega}\To N$ such that $[i]$-chains are sent to elements of $\Gamma_i$. Using Theorem~\ref{weakRamsey}, we can suppose that, for each type $\tau$, either all sets of type $\tau$ are sent by $u$ to some $\Gamma_{f(\tau)}$, or all sets of type $\tau$ are sent to $(\bigcup_{i<n}\Gamma_i)^\perp$. The image of $u$ is then orthogonal to all $\Gamma_i$ which are not of the form $\Gamma_{f(\tau)}$. Since there are only $J(m)$ types in $m^{<\omega}$ we conclude that the image of $u$ is orthogonal to all but $J(m)$ of the $\Gamma_i$'s.\\

The function $J$ is optimal for Theorem~\ref{Jbreak}. A particular instance of this result asserts that if $\Gamma$ is an analytic $n$-gap, and $B\subset n$ is a set of cardinality 2, then there exists $B\subset C \subset n$ with $|C|\leq J(2) = 8$ such that $\Gamma$ can be $C$-broken. Optimality means that we can find an $8$-gap which cannot be $C$-broken for any $C$ with $\{0,1\}\subset C \subsetneq \{0,1\ldots,7\}$. This 8-gap is nothing else than considering $\{\tau_0,\ldots,\tau_7\}$ the 8 types of the dyadic tree, and then define $\Gamma$ so that $\Gamma_i$ is the family of all sets of type $\tau_i$.\\

To finish this section, we would like to make some comments going back to the non-analytic world. We can relativize the notions of jigsaw and clover to a given $B\subset n$:
\begin{itemize}
\item We say that $\Gamma$ is a $B$-clover if it cannot be $B$-broken,
\item We say that $\Gamma$ is a $B$-jigsaw if $\{\Gamma_i|_M : i\in A\}$ can be $B$-broken whenever $A\supset B$, $M\subset N$ and $\{\Gamma_i|_M : i\in A\}$ is a gap,
\end{itemize}

and then we can ask about mixed versions of jigsaws and clovers: Given a family $\mathfrak{X}$ of subsets of $n$, can we find an $n$-gap which is a $B$-jigsaw for $B\in \mathfrak{X}$ and a $B$-clover for $B\not\in\mathfrak{X}$, $|B|\geq 2$? In the analytic case, we know that this is possible only for certain families $\mathfrak{X}$ and this corresponds to the previous discussion. If we ask about arbitrary $n$-gaps, then we were able to give a positive answer to this question \cite[Theorem 26]{multiplegaps} assuming the existence of a completely separable almost disjoint family. We do not know if this can be proved from ZFC alone. Actually it is unknown if the existence of completely separable almost disjoint families follows from ZFC, they are known to exist under certain assumptions like $\mathfrak{c}<\aleph_\omega$ or $\mathfrak{s}<\mathfrak{a}$~\cite{sane}.\\

%
%

\section{Multiple gaps which are countably separated}\label{sectioncountablyseparated}

Some structural theory of gaps is possible in the general non-analytic case when the gaps are countably separated. This is done in \cite[Section 5]{multiplegaps}. Given a topological space $L$ and a subset $a\subset L$, let $acc(a)$ be the set of accumulation points of $a$, that is
$$acc(a) = \bigcap\left\{\overline{a\setminus c} : c\text{ finite}\right\}$$
Fix a countable dense subset $D$ of the Cantor set $L=2^\omega$, and for each subset $G\subset L$, define:
$$\mathcal{I}_G = \{a\subset D : acc(a)\subset G\}$$
We say that the sets $\{G_i : i<n\}$ are separated by open sets if there exist open sets $U_i\supset G_i$ such that $\bigcap_{i<n}U_i = \emptyset$. The $n_\ast$-gaps obtained in this way are \emph{above} any countably separated $n_\ast$-gap:

\begin{thm}
Let $\{G_i : i<n\}$ be subsets of $L$ which cannot be separated by open sets and $\bigcap_{i<n}G_i = \emptyset$. Then $\{\mathcal{I}_{G_i} : i<n\}$ is an $n_\ast$-gap which is countably separated.
\end{thm}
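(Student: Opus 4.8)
The plan is to verify each of the three clauses in the definition of $n_\ast$-gap, plus countable separation, taking $N = D$ as the countable ground set. The countably-separated family $C$ will be the collection of all \emph{clopen} subsets of $L$ intersected with $D$; since $L = 2^\omega$ has a countable base of clopen sets, $C$ is countable.

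First I would check condition (1), that $\bigcap_{i<n} x_i =^\ast \emptyset$ whenever $x_i \in \mathcal{I}_{G_i}$. Suppose for contradiction the intersection $b = \bigcap_{i<n}x_i$ is infinite. Then $\emptyset \ne acc(b) \subset \bigcap_{i<n}acc(x_i) \subset \bigcap_{i<n}G_i = \emptyset$, a contradiction; here I use that $acc$ is monotone and that $acc(x_i)\subset G_i$ by definition of $\mathcal{I}_{G_i}$. Condition (3), infinite-heredity, is immediate: if $y\subset x$ then $acc(y)\subset acc(x)\subset G_i$.

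Next, countable separation (which in particular gives non-separation, clause (2), since a separated $n_\ast$-gap of the present form would have to be countably separated as well — actually we must show \emph{not} countably separated, so I will argue the contrapositive). Suppose $C'$ were a countable family witnessing countable separation. Given $x_i\in\mathcal{I}_{G_i}$, there are $c_i\in C'$ with $x_i\subset c_i$ and $\bigcap_i c_i=\emptyset$. Passing to closures in $L$, one gets $\overline{x_i}\subset\overline{c_i}$, and in particular $G_i\supset acc(x_i)$, but more usefully $G_i\subset\overline{c_i}$ once we choose the $x_i$ cleverly. The key point is to choose, for each $i$, a set $x_i\subset D$ with $acc(x_i)=G_i$ exactly — possible because $D$ is dense, so $G_i\cap\overline{D}=G_i$ and one can thin out a sequence converging to each point of a countable dense subset of $G_i$; more care is needed when $G_i$ is not separable-in-itself, and this is where the topology of $2^\omega$ (second countability, hence hereditary separability) is used. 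With such $x_i$, closure of $c_i\supset x_i$ gives $\overline{c_i}\supset acc(x_i)=G_i$. Then $\{\overline{c_i}\}_{c\in C'}$ — or rather the finitely many $\overline{c_i}$ appearing — are closed sets containing $G_i$ with empty total intersection; taking complements and using normality/compactness of $L$ one shrinks to \emph{open} sets $U_i\supset G_i$ with $\bigcap_i U_i=\emptyset$ (a $\bigcap$-finite family of closed sets whose intersection is empty can be separated from each other by opens in a compact Hausdorff space). This contradicts the hypothesis that $\{G_i\}$ cannot be separated by open sets. The same argument, a fortiori, shows $\{\mathcal{I}_{G_i}\}$ is not separated, giving clause (2).

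Finally I would exhibit the countable separating family to establish that $\{\mathcal{I}_{G_i}\}$ \emph{is} countably separated — wait, the theorem asserts it \emph{is} countably separated, so this direction is what we want, and the previous paragraph must instead be re-read as: we are \emph{given} $\bigcap G_i=\emptyset$ and must produce the witness $C$. Take $C = \{\,U\cap D : U\text{ clopen in }L\,\}$. Given $x_i\in\mathcal{I}_{G_i}$, the sets $acc(x_i)$ are closed, contained in $G_i$, with $\bigcap_i acc(x_i)\subset\bigcap_i G_i=\emptyset$; since $L$ is zero-dimensional compact Hausdorff, I can find clopen $U_i\supset acc(x_i)$ with $\bigcap_i U_i=\emptyset$. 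Then $x_i\setminus U_i$ is a subset of $D$ with no accumulation points, i.e.\ finite... no: $x_i\setminus U_i$ could be infinite but then it accumulates somewhere in $L$, necessarily outside $U_i\supset acc(x_i)$ — contradiction, so $x_i\setminus U_i$ is finite, hence $x_i\subset^\ast U_i\cap D$. Replacing each $U_i$ by $U_i$ still clopen with empty total intersection, $c_i := U_i\cap D\in C$ works. (One adjusts for the finitely many exceptional points by absorbing them, using that singletons of $D$ are clopen-traces, or by noting the definition of countable separation in the excerpt uses $\subset$, not $\subset^\ast$, so one should instead build $U_i$ to contain all of $x_i$, not just $acc(x_i)$: cover the isolated part of $x_i$ by small clopen boxes avoiding the other $G_j$'s, which is possible since that part is a finite or discrete set with closure adding only points of $\bigcup acc(x_j)$.) The main obstacle is precisely this last bookkeeping — ensuring the clopen sets can be chosen to swallow the \emph{whole} of each $x_i$ while keeping $\bigcap_i U_i=\emptyset$ — and it is handled by the compactness and zero-dimensionality of the Cantor set together with $\bigcap_i G_i=\emptyset$.
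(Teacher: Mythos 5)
Your verification of clauses (1) and (3) is fine, and your witness for countable separation is essentially the right one: given $x_i\in\mathcal{I}_{G_i}$, the sets $acc(x_i)$ are closed, contained in $G_i$, and have empty intersection, so compactness and zero-dimensionality of $L$ give clopen $U_i\supset acc(x_i)$ with $\bigcap_i U_i=\emptyset$, and then $x_i\setminus U_i$ is finite; the leftover finitely many points are absorbed by closing the countable family under finite modifications (note, however, that singletons of $D$ are not clopen traces: every nonempty open subset of $2^\omega$ meets the dense set $D$ in an infinite set, so that particular fix does not work as stated).

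The genuine gap is clause (2), non-separation, for which you never give a valid argument. Your second paragraph starts from a countable separating family $C'$ and aims at a contradiction with the hypothesis on the $G_i$; if that argument were sound it would show the families are not countably separated, i.e.\ it would refute the very theorem being proved, so it has to break somewhere — and it breaks twice. First, there is in general no $x_i\in\mathcal{I}_{G_i}$ with $acc(x_i)=G_i$: the set $acc(x_i)$ is always closed, whereas $G_i$ need not be, so no single member of $\mathcal{I}_{G_i}$ can capture all of $G_i$. Second, and decisively, from $\bigcap_i c_i=\emptyset$ (an equality between subsets of $D$) you infer that the closures $\overline{c_i}$ in $L$ have empty intersection; this is false — disjoint subsets of $D$ routinely have intersecting closures, and that is precisely the phenomenon which makes $\{\mathcal{I}_{G_i}\}$ a gap while still being countably separated.

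The correct route to clause (2) is to start from a putative separation $a_0,\ldots,a_{n-1}$, i.e.\ single sets with $x\subset^\ast a_i$ for all $x\in\mathcal{I}_{G_i}$ and $\bigcap_i a_i=^\ast\emptyset$, and to use complements rather than closures: put $V_i=L\setminus acc(D\setminus a_i)$, an open set. Each $G_i\subset V_i$, since a point $g\in G_i\cap acc(D\setminus a_i)$ would admit a sequence in $D\setminus a_i$ converging to $g$, which lies in $\mathcal{I}_{G_i}$ but is not almost contained in $a_i$. Moreover $\bigcap_i V_i=\emptyset$: a point of this intersection has a neighborhood $W$ with $W\cap(D\setminus a_i)$ finite for every $i$, hence $W\cap D\subset^\ast\bigcap_i a_i$ is finite, contradicting that $D$ is dense in the crowded space $2^\omega$. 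Thus the $G_i$ would be separated by open sets, against the hypothesis. The uniformity of the $a_i$ over all members of $\mathcal{I}_{G_i}$ is exactly what countable separation lacks, which is why the two notions genuinely diverge here; your proposal conflates them.
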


\begin{thm}
Let $\{\Gamma_i : i<n\}$ be a countably separated $n_\ast$-gap on a countable set $N$. Then there exists a bijection $\phi:N\To D$, and sets $\{G_i : i<n\}$ as in the above theorem such that $\phi(\Gamma_i)\subset \mathcal{I}_{G_i}$.
\end{thm}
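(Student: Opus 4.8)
The statement is a representation theorem: every countably separated $n_\ast$-gap embeds into one of the canonical examples $\{\mathcal{I}_{G_i}:i<n\}$ built from subsets of the Cantor set. The plan is to use the countable separating family to build the space. Let $\Gamma=\{\Gamma_i:i<n\}$ be a countably separated $n_\ast$-gap on $N$, witnessed by a countable set $C=\{c^k_j : k<\omega,\ j<n\}$ of subsets of $N$ such that whenever $x_i\in\Gamma_i$ there are $c_0,\dots,c_{n-1}\in C$ with $x_i\subset c_i$ and $\bigcap_{i<n}c_i=\emptyset$. First I would use $C$ to define a map from $N$ into a compact metrizable space. The natural choice is $\chi:N\To 2^C$, $\chi(t)=(c\mapsto [t\in c])$, sending each point of $N$ to the pattern of which members of $C$ contain it. After a harmless reduction (discard points with the same pattern, or rather pass to a subset on which $\chi$ is injective and has dense image in its closure — this is possible because $N$ is countable and we may enlarge $C$ by a countable amount, e.g.\ throwing in singletons, without destroying the separating property) we may assume $\chi$ is injective and $\chi(N)$ is a countable dense subset of a closed subspace $K\subset 2^C$. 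Since $K$ is a nonempty compact metrizable space with no isolated points (again after discarding the countably many isolated points and re-embedding), it is homeomorphic to the Cantor set $2^\omega$, and we may transport the situation so that $\chi(N)=D$, the fixed countable dense subset of $L=2^\omega$, giving the desired bijection $\phi=\chi:N\To D$.

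\textbf{Defining the sets $G_i$.} Having identified $N$ with $D\subset L$, I would set $G_i=\bigcap\{\overline{c^k_i}^{\,L} : k<\omega\}$, the intersection in $L$ of the closures of the $i$-th family of separators — or, slightly more carefully, $G_i$ should be the set of accumulation points that are ``unavoidable'' for $\Gamma_i$; concretely take $G_i=\bigcup\{acc(x) : x\in\Gamma_i\}$ and then let $G_i$ be its closure, or dually the complement of the union of the interiors of the complements of the relevant separators. The key verification is then twofold. First, $\phi(\Gamma_i)\subset\mathcal{I}_{G_i}$: given $x\in\Gamma_i$, pick (using the separating family, applied with arbitrary choices in the other coordinates) a $c\in C$ with $x\subset c$ that participates in a separation; then $acc(\phi(x))\subset\overline{c}$, and intersecting over enough such $c$ shows $acc(\phi(x))\subset G_i$. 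Second, $\bigcap_{i<n}G_i=\emptyset$: if a point $p$ lay in every $G_i$, one could trace back through the definition to produce $x_i\in\Gamma_i$ with $p\in\bigcap_i\overline{\phi(x_i)}$, and because $p$ is an accumulation point in all coordinates simultaneously this would contradict condition~(1) in the definition of $n_\ast$-gap, namely $\bigcap_{i<n}x_i=^\ast\emptyset$ — the finite-intersection property of the closures forces $\bigcap_i x_i$ to be infinite.

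\textbf{Non-separation by open sets.} Finally I would check that the $\{G_i\}$ cannot be separated by open sets; this is where non-separation of the gap $\Gamma$ gets used. Suppose open $U_i\supset G_i$ with $\bigcap_{i<n}U_i=\emptyset$. Since $L$ is compact metric, each $U_i$ is a countable union of basic clopen sets, and using compactness of $L\setminus U_i$ one can capture $G_i$ by a single clopen (or $F_\sigma$) set whose trace $a_i$ on $D=\phi(N)$ would then satisfy $x\subset^\ast a_i$ for every $x\in\Gamma_i$ (because $acc(x)\subset G_i\subset U_i$ means all but finitely many points of $x$ land in the clopen neighbourhood) while $\bigcap_i a_i=^\ast\emptyset$ — contradicting that $\Gamma$ is not separated. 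Thus $\{G_i\}$ are as required and the previous theorem confirms $\{\mathcal{I}_{G_i}:i<n\}$ is a genuine $n_\ast$-gap. \textbf{The main obstacle} I anticipate is the bookkeeping in the reduction step: one must enlarge $C$ so that $\chi$ separates points of $N$ and so that $K=\overline{\chi(N)}$ is perfect, while simultaneously keeping $C$ countable and preserving the countable-separation witness; doing this cleanly — and matching up the resulting Cantor set with the \emph{fixed} $D\subset 2^\omega$ of the statement rather than just some homeomorphic copy — requires care, though it is essentially routine since every countable metric space without isolated points is homeomorphic to $\mathbb{Q}$ and every perfect compact metric space to $2^\omega$. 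A secondary subtlety is getting the two candidate definitions of $G_i$ (via unions of accumulation sets versus via intersections of closures of separators) to agree, or choosing the one that makes both inclusions above go through.
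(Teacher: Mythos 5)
Your opening move (the pattern map $\chi:N\to 2^{C}$, $\chi(t)(c)=[t\in c]$) is the right one, and the mechanism by which $C$ should enter is visible there: for $c\in C$ the image $\chi(c)$ sits inside the clopen set $\{z:z(c)=1\}$, and if $\bigcap_{i<n}c_i=\emptyset$ then $\{z: z(c_i)=1 \text{ for all } i\}$ is a clopen set missing $\chi(N)$, hence missing $\overline{\chi(N)}$, so the accumulation sets of the $\chi(c_i)$ cannot all meet. But your reduction to the fixed pair $(2^\omega,D)$ is genuinely broken. Every isolated point of $K=\overline{\chi(N)}$ lies in the dense set $\chi(N)$, so ``discarding the countably many isolated points'' deletes images of points of $N$ and leaves $\phi$ undefined there; worse, after your own injectivity fix (adding all singletons to $C$) \emph{every} point $\chi(t)$ becomes isolated in $K$, since the clopen set $\{z:z(\{t\})=1\}$ meets $\chi(N)$ only in $\chi(t)$. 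Thus the perfect kernel of $K$ is disjoint from the image (and $K$ may even be countable, with empty perfect kernel), and the proposed ``discard and re-embed'' step is vacuous or destructive. Producing a bijection onto the prescribed dense set $D\subset 2^\omega$ while retaining control of the accumulation sets is the actual technical content of the theorem (one must fatten the target or pull the structure back through a suitable continuous surjection $2^\omega\to K$ and build $\phi$ by an approximation/back-and-forth argument), and your sketch does not supply it.

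The second gap is in the verification of $\bigcap_{i<n}G_i=\emptyset$. You argue that a common accumulation point of $\phi(x_0),\dots,\phi(x_{n-1})$ forces $\bigcap_i x_i$ to be infinite; this is false — pairwise disjoint subsets of $D$ can converge to the same point of $2^\omega$. Note that this step of yours never uses the countable separating family at all; if it were valid, the theorem would hold for an arbitrary $n_\ast$-gap, and then every $n_\ast$-gap would be countably separated (one reads a countable separation off the $G_i$ using a countable base and compactness), contradicting the existence of strong analytic gaps such as $\mathcal{S}^n$. The emptiness of $\bigcap_i G_i$ is exactly where countable separation must be used, through the construction of $\phi$: with $G_i=\bigcup\{acc(\phi(x)):x\in\Gamma_i\}$ (do \emph{not} take closures here, that can destroy the empty intersection), a point $p\in\bigcap_i G_i$ yields $x_i\in\Gamma_i$ with $p\in acc(\phi(x_i))$, then witnesses $c_i\supset x_i$ with $\bigcap_i c_i=\emptyset$, and one needs $\bigcap_i acc(\phi(c_i))=\emptyset$ — the property the pattern map has automatically but which you never carry through your unspecified re-embedding. (Your remaining pieces are fine: the inclusion $\phi(\Gamma_i)\subset\mathcal{I}_{G_i}$ is immediate with this definition of $G_i$, and the argument that the $G_i$ cannot be separated by open sets — pulling open $U_i\supset G_i$ back to $a_i=\phi^{-1}(U_i\cap D)$ and using compactness of $L\setminus U_i$ — is correct and is where non-separation of $\Gamma$ is used.) The survey itself gives no proof, citing Section 5 of the Avil\'es–Todorcevic paper, so the comparison here is against what any complete argument must contain; as it stands, yours is missing the two essential steps.
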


Say that $\{\Gamma_i : i<n\}$ is strongly countably separated if $\{\Gamma_i : i\in A\}$ is countably separated for each $A\subset n$. The class of \emph{dense strongly countably separated $n$-gaps} is an extreme class of gaps. Among the minimal analytic $n$-gaps, the only ones which are dense and strongly countably separated are the permutations of $\{\Gamma_{M_i} : i<n\}$ where $M_i$ is the set of types $\tau$ with $\max(\tau)=i$. The point is that this is the only class where we know that the structural theory of analytic gaps extends to general non-analytic gaps.

\begin{thm}
If $\Delta = \{\Delta_i : i<n\}$ is a dense strongly countably separated $n$-gap, then there is a permutation $\Gamma^\sigma$ of $\{\Gamma_{M_i} : i<n\}$ such that $\Gamma^\sigma \leq \{\Delta^{\perp\perp}_i : i<n\}$.
\end{thm}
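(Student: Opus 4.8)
The plan is to show that a dense strongly countably separated $n$-gap $\Delta$ must contain, in the order $\leq$ of Definition~\ref{gaporder} applied to the bidual families $\Delta_i^{\perp\perp}$, some permutation of the canonical gap $\Gamma^{M} = \{\Gamma_{M_i} : i<n\}$, where $M_i$ is the set of types $\tau$ in $n^{<\omega}$ with $\max(\tau) = i$. The starting point is Theorem~\ref{standardbelowgap}: since $\Delta$ is an $n_\ast$-gap, there are sets of types $\{S_i : i<n\}$ with $[i]\in S_i$ and a witnessing injection $\phi:n^{<\omega}\To N$ giving $\{\Gamma_{S_i}:i<n\}\leq \Delta$. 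The first main step is to upgrade this to density of the image gap: using that $\Delta$ is dense, every infinite subset of $\phi(n^{<\omega})$ contains an infinite member of some $\Delta_i$; pulling back along $\phi$ (which by Theorem~\ref{normalembedding} we may take to be a normal embedding) and applying Theorem~\ref{weakRamsey} to the coloring of types by which $\Delta_i$ they land in, we may assume that on a subtree $T\sim n^{<\omega}$ every type $\tau$ of $n^{<\omega}$ is assigned a color $f(\tau)\in n$ with $\phi(\text{sets of type }\tau)\subset\Delta_{f(\tau)}$. Density forces $f$ to be surjective onto $n$, and since $[i]\in S_i$ we get $f([i]) = i$ after identifying the index sets appropriately.

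The second step is the combinatorial heart: I want to show that, after a further Milliken-style thinning and possibly a permutation $\sigma$ of $n$, the color function $f$ agrees with $\tau\mapsto \max(\tau)$. Here I would use the structural facts about normal embeddings recalled in Section~\ref{sectionweak}, above all the monotonicity $\max(\tau)\leq\max(\sigma)\Rightarrow\max(\bar\phi\tau)\leq\max(\bar\phi\sigma)$, together with the domination machinery of Theorem~\ref{dominationthm}. The idea is that the strong countable separation of every subgap $\{\Delta_i : i\in A\}$ is a strong rigidity hypothesis: if two types $\tau,\sigma$ with $\max(\tau)\neq\max(\sigma)$ were sent to the \emph{same} $\Delta_i$, or if a type $\tau$ with $\max(\tau) = j$ were sent to $\Delta_i$ with $i\neq j$, one could restrict to a subtree realizing only those types and produce inside $\phi(n^{<\omega})$ a sub-$n_\ast$-gap that cannot be countably separated — contradicting that $\{\Delta_i:i\in A\}$ is countably separated on that restriction, since a sub-object of a countably separated gap is countably separated. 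The passage from ``$\Gamma_{S_i}\leq\Delta$'' to ``$\Gamma_{M_i}\leq\Delta_i^{\perp\perp}$'' is where the double orthogonal enters: $\Delta_i^{\perp\perp}$ is the largest family with the same orthogonal as $\Delta_i$, so once we know $\phi$ sends exactly the types with $\max = i$ into $\Delta_i$ (and the rest into $\Delta_i^\perp$, by orthogonality of the comb/type families and non-separation), we automatically get all sets of type $\tau$ with $\max(\tau) = i$ into $\Delta_i^{\perp\perp}$, and the orthogonal-preservation clause of Definition~\ref{gaporder} holds for free.

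The step I expect to be the main obstacle is pinning down $f = \max$ up to permutation — i.e., ruling out ``wrong'' colorings of types that are nonetheless consistent with a normal embedding and with each proper subgap being countably separated. This requires a careful case analysis of which types can coexist in a single $\Delta_i$ without destroying countable separation of the $(n-1)$-subgaps, and the argument must be run simultaneously over all $A\subset n$, not just one. I would organize it by induction on $n$: the subgap $\{\Delta_i : i\in A\}$ with $|A| = n-1$ is again dense and strongly countably separated on a suitable restriction (density of a subgap is the delicate point and may need density plus the type-structure, using that every infinite subset of $n^{<\omega}$ already contains a set of some type), so by induction $f$ restricted to types of the corresponding sub-tree is $\max$ up to permutation, and one then glues these local identifications into a single global permutation by comparing overlaps. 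The remaining work — checking that the glued $\phi$ really witnesses $\Gamma^\sigma\leq\{\Delta_i^{\perp\perp}:i<n\}$ — is then routine bookkeeping with the definitions.
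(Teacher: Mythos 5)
Your plan collapses at its first step, and the failure is precisely the point of this theorem. The gap $\Delta$ here is \emph{not} assumed to be analytic: the hypotheses are only density and strong countable separation, and the surrounding text stresses that this is the one class where the structure theory is known to extend beyond the analytic world. The machinery you invoke is analytic machinery. Theorem~\ref{standardbelowgap} (whose statement in this survey omits the word ``analytic'', but which is derived from Theorem~\ref{criticalgap} together with Theorem~\ref{weakRamsey}) needs analyticity twice: Theorem~\ref{criticalgap} is a dichotomy for analytic $n_\ast$-gaps, and the colorings of types one feeds into Theorem~\ref{weakRamsey} (``does $\phi(X)$ land in $\Delta_k$?'') are Baire-measurable only because analytic families have the Baire property. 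For a general $\Delta$ there is simply no injection $\phi:n^{<\omega}\To N$ sending types into the $\Delta_i$'s to start from; indeed such a basis statement is false for arbitrary $n$-gaps, since a gap made of types with $[i]\in S_i$ can always be $B$-broken for some $B$ of size $2$ and breaking transfers upwards along $\leq$, which would contradict the existence of the Bernstein-set clover of Section~\ref{sectionbreak}. Theorem~\ref{normalembedding} and the domination analysis are likewise tools about maps between trees carrying type gaps, so the ``rigidity'' argument you sketch in the second step has nothing to act on. This is also why the conclusion is stated with the biorthogonals $\Delta_i^{\perp\perp}$: without analyticity one cannot expect to place the type families inside $\Delta_i$ itself, so the $\perp\perp$ is a genuine weakening, not bookkeeping that ``holds for free''.

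The intended route is the one indicated by the rest of Section~\ref{sectioncountablyseparated} (carried out in Section 5 of the Multiple gaps paper): strong countable separation is used through the representation theorem stated there, which carries $\Delta$ (and each subgap $\{\Delta_i : i\in A\}$) by a bijection onto families of the form $\mathcal{I}_{G_i}$, where the $G_i$ are subsets of a compact metric space with empty intersection that cannot be separated by open sets. One then argues topologically with these sets --- density of $\Delta$ plus non-separation by open sets of the relevant subfamilies feed a Baire-category/Cantor--Bendixson analysis that produces a countable compact scattered configuration of height $n+1$ whose derived levels meet the $G_i$ according to a permutation $\sigma$; the closing remark of the section, identifying $\{\Gamma_{M_i} : i<n\}$ itself with $\{\mathcal{I}_{G_i} : i<n\}$ for the Cantor--Bendixson levels of such a space, is the blueprint for why this yields $\Gamma^\sigma\leq\{\Delta_i^{\perp\perp} : i<n\}$. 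If you add the hypothesis that $\Delta$ is analytic, your Ramsey-theoretic outline becomes viable but proves a strictly weaker statement than the one asked for.
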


Taking the biorthogonal is a necessary, but not relevant for most applications, restriction in this case. For example, since $\{\Gamma_{M_i} : i<n\}$ can be checked to be a jigsaw, we get, without analyticity assumptions:

\begin{cor}
Every dense strongly countably separated $n$-gap is a jigsaw.
\end{cor}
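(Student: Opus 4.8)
The plan is to deduce the Corollary immediately from the preceding Theorem together with the fact, asserted in the same paragraph, that the gap $\{\Gamma_{M_i} : i<n\}$ is a jigsaw. Let $\Delta = \{\Delta_i : i<n\}$ be a dense strongly countably separated $n$-gap on a countable set $N$. By the Theorem, there is a permutation $\Gamma^\sigma$ of $\{\Gamma_{M_i} : i<n\}$ with $\Gamma^\sigma \leq \{\Delta_i^{\perp\perp} : i<n\}$. So the first observation to record is that being a jigsaw is preserved under permutations of the index set (obvious, since the defining condition quantifies over all $B\subset A\subset n$), hence $\Gamma^\sigma$ is again a jigsaw; and it is preserved under passing to a biorthogonal closure, since $\Delta_i \subset \Delta_i^{\perp\perp}$ and the two families have the same orthogonal, so that the notions of \emph{gap}, \emph{separated}, and hence \emph{$B$-broken} are insensitive to this operation applied coordinatewise.

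Next I would unpack what $\Gamma^\sigma \leq \{\Delta_i^{\perp\perp} : i<n\}$ gives us via Definition~\ref{gaporder}: a one-to-one $\phi : n^{<\omega} \To N$ (renaming indices by $\sigma$ if needed) such that $x\in \Gamma_{M_i}$ implies $\phi(x)\in\Delta_{\sigma(i)}^{\perp\perp}$ and $x\in\Gamma_{M_i}^\perp$ implies $\phi(x)\in\Delta_{\sigma(i)}^\perp$. Now suppose we are given $B\subset A\subset n$ and an infinite $M\subset N$ such that $\{\Delta_i|_M : i\in A\}$ is not separated; we must $B$-break it. The idea is to pull the situation back along $\phi$: since $\{\Gamma_{M_i} : i<n\}$ is a jigsaw, once we locate inside $n^{<\omega}$ a copy of the relevant restricted subgap, we can break it there and push the breaking set forward through $\phi$ into $M$. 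Concretely, one first applies Theorem~\ref{standardbelowgap} (or the structural results of Section~\ref{sectionweak}) to the restricted gap $\{\Delta_i|_M : i\in A\}$ to find a gap of types below it, witnessed by some injective $u : |A|^{<\omega}\To M$; composing with $\phi^{-1}$ on the image and invoking the jigsaw property of $\{\Gamma_{M_i} : i<n\}$ on the corresponding subset of $n^{<\omega}$, one obtains a set $M'\subset n^{<\omega}$ that $B$-breaks the subgap; finally $\phi(M')\cap M$ (or rather the appropriate pullback through the composed maps) is a subset of $M$ witnessing that $\{\Delta_i|_M : i\in A\}$ is $B$-broken, using the orthogonality-preservation clause of $\leq$ to guarantee that $\phi(M')\in\Delta_i^\perp$ for $i\in A\setminus B$ and that the restriction to $B$ remains a gap.

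The main obstacle I anticipate is bookkeeping the interplay of three different reductions — the permutation $\sigma$, the biorthogonal closure $\perp\perp$, and the composition of embeddings $u$ and $\phi$ — while making sure that "$B$-broken" genuinely transfers in the right direction. In particular one must check that the property "$\{\Delta_i|_M : i\in A\}$ is a gap" is equivalent to the corresponding statement for the biorthogonal families (so that the conclusion of the jigsaw applied to $\Gamma^\sigma$ is about the same objects we started with), and that density of $\Delta$ is actually used — it should enter precisely to ensure that the embedding furnished by the structural theorem lands in the part of $n^{<\omega}$ where every type is realized, matching the density clause in the definition of $\{\Gamma_{M_i} : i<n\}$ being a jigsaw. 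Once these compatibilities are pinned down, no further computation is needed: the Corollary is a formal consequence. I would therefore keep the write-up short, essentially: \emph{By the Theorem, $\Delta^{\perp\perp}$ dominates a permutation of the jigsaw $\{\Gamma_{M_i}:i<n\}$; since being a jigsaw passes upward along $\leq$ and is insensitive to biorthogonals and to permutations, $\Delta$ is a jigsaw.}\medskip

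The only genuinely new ingredient to verify is thus the monotonicity lemma: \emph{if $\Gamma\leq\Delta$ and $\Gamma$ is a jigsaw, then so is $\Delta$} (applied here with $\Gamma = \Gamma^\sigma$ and $\Delta = \{\Delta_i^{\perp\perp}:i<n\}$, after noting the latter is a jigsaw iff $\Delta$ is). This lemma is proved exactly by the pullback argument sketched above, and it is the natural place to spend whatever detailed effort the proof requires.
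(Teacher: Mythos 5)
Your deduction hinges on the ``monotonicity lemma'': if $\Gamma\le\Delta$ and $\Gamma$ is a jigsaw, then so is $\Delta$. That lemma is false, and this is the genuine gap in the proposal. The order of Definition~\ref{gaporder} only gives control inside the image of the embedding $\phi$, whereas the jigsaw property of $\Delta$ quantifies over \emph{all} infinite $M\subset N$, including sets disjoint from $\phi(n^{<\omega})$. Concretely, take any jigsaw $\Gamma$ on $N$ and any $n$-gap $\Delta'$ on a disjoint countable set $M'$ which is not a jigsaw (for instance the gap of types $[0],[1],[01]$ from Section~\ref{sectionbreak}, or the Bernstein clover), and put $\Delta_i=\Gamma_i\cup\Delta'_i$ on $N\cup M'$. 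The inclusion $N\hookrightarrow N\cup M'$ witnesses $\Gamma\le\Delta$, but the restriction of $\Delta$ to $M'$ is $\Delta'$, so $\Delta$ is not a jigsaw. The same phenomenon ruins your pullback sketch: the given $M$ may not meet $\phi(n^{<\omega})$ at all, so ``composing with $\phi^{-1}$ on the image'' is undefined, and a breaking set produced inside $\phi(n^{<\omega})$ says nothing about $\{\Delta_i|_M : i\in A\}$. Your attempted repair --- applying Theorem~\ref{standardbelowgap} to the restricted gap --- is also not available: that theorem, like Theorems~\ref{criticalgap} and~\ref{weakRamsey} on which it rests, concerns \emph{analytic} gaps, and the entire point of this Corollary is that no analyticity is assumed (indeed, if those theorems applied to arbitrary gaps, there could be no clover at all).

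What the paper intends, and what a correct write-up must do, is to apply the preceding Theorem not once to $\Delta$ but to each configuration $\{\Delta_i|_M : i\in A\}$ that has to be $B$-broken. This is exactly what the hypotheses are for: \emph{strong} countable separation passes to every subfamily $A\subset n$ and to every restriction to $M\subset N$ (the same countable $C$ works), and one then has to arrange density on a suitable further restriction (automatic when $A=n$, and the place where real work is needed when $A\subsetneq n$); the Theorem then plants a permuted copy of $\{\Gamma_{M_i} : i\in A\}$ inside the biorthogonals of the restricted families, and breaking that copy --- which is a jigsaw --- yields the required breaking set. Your two auxiliary observations are correct and are indeed the ones needed to finish from there: permutations are harmless, and the biorthogonal is harmless because $\Delta_i^{\perp\perp\perp}=\Delta_i^\perp$ and, by heredity, any $a$ almost-containing every member of $\Delta_i|_{M'}$ also almost-contains every member of $\Delta_i^{\perp\perp}$ contained in $M'$, so non-separation and orthogonality transfer back from the biorthogonal families to the original ones. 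But the core of the argument is this application of the Theorem to restrictions, not an upward transfer of the jigsaw property along $\le$, which does not hold.
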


We notice that the gap $\{\Gamma_{M_i} : i<n\}$ is the same as $\{\mathcal{I}_{G_i} : i<n\}$, where in the above definition of ideals $\mathcal{I}_G$, we consider $L$ to be a countable compact scattered space of height $n+1$ instead of the Cantor set, $D$ is the set of isolated points of $L$, and $G_i = L^{(i+1)}\setminus L^{(i+2)}$ are the levels in the Cantor-Bendixson derivation of $L$.

\section{Multiple gaps and the topology of $\beta\omega\setminus\omega$}\label{sectionomegastar}

The \v{C}ech-Stone compactification $\beta\omega$ is a topological space characterized by the following properties:
\begin{enumerate}
\item $\beta\omega$ contains the set of natural numbers, $\omega\subset\beta\omega$ as a dense subset,
\item each $n\in\omega$ is an isolated point of $\beta\omega$,
\item $\beta\omega$ is a compact space,
\item If $a,b\subset\omega$ and $a\cap b = \emptyset$, then $\overline{a}\cap \overline{b} = \emptyset$.
\end{enumerate}

When we remove isolated points, we obtain the \v{C}ech-Stone remainder $\omega^\ast = \beta\omega\setminus \omega$ of the natural numbers. The point about these spaces is that all problems abouts the family of subsets of $\omega$, endowed with the order $\subset$ can be translated into topological problems on $\beta\omega$, and all problems related to the contention modulo finite $\subset^\ast$ (like those related to gaps) can be translated into topological problems about $\omega^\ast$. This translation procedure is called \emph{Stone duality}, and the reader who is not familiar may be referred to \cite{Walker, Semadeni} as two among many places where one can learn about it. We shall explain now how the gap theory translates through this duality, providing some appealing statements about the space $\omega^\ast$. No detailed argumets will be found in this section, but the fact is that all results are simple excercises of translation through Stone duality, once one gets a basic familiarity with it.\\

Each open subset $U$ of $\omega^\ast$ can be associated to a family $I(U)$ of subsets of $\omega$,
$$I(U) = \{a\subset \omega : acc(a) \subset U\}$$

The families $I(U)$ and $I(V)$ are orthogonal if and only if $U\cap V =\emptyset$. The families $\{I(U_i) : i<\omega\}$ can be separated if and only if $\bigcap_{i<n}\overline{U_i} = \emptyset$. Hence, an $n$-gap corresponds to a finite family of pairwise disjoint open subsets of $\omega^\ast$ whose closures have nonempty intersection.\\

Let us start by translating Theorem~\ref{aleph1}:

\begin{thm}
Assume $MA_{\aleph_1}$, and let $U,V,W$ be three pairwise disjoint open subsets of $\omega^\ast$, each of which is a union of $\aleph_1$ many closed sets. Then $\overline{U}\cap\overline{V}\cap\overline{W} = \emptyset$.
\end{thm}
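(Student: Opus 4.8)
The plan is to translate the statement back through Stone duality into the language of $n_\ast$-gaps and then invoke Theorem~\ref{aleph1}. First I would set up the dictionary already recalled in this section: a union of $\aleph_1$ many closed subsets of $\omega^\ast$ corresponds, on the side of $\mathcal{P}(\omega)/fin$, to a family generated by $\aleph_1$ many sets. More precisely, if $U = \bigcup_{\alpha<\omega_1} F_\alpha$ with each $F_\alpha\subset\omega^\ast$ closed, pick for each $\alpha$ a subset $a_\alpha\subset\omega$ with $acc(a_\alpha)\supset F_\alpha$ (shrinking if necessary so that $acc(a_\alpha)\subset U$, which is possible because $\omega^\ast$ is an F-space so one may thin $a_\alpha$ to push its accumulation points into the open set $U$); then the family $\Gamma^U = \{x\subset\omega : acc(x)\subset U\}$ is, modulo finite, countably generated \emph{over} the $\aleph_1$-sized subfamily $\{a_\alpha : \alpha<\omega_1\}$, hence of effective cardinality $\aleph_1$ for the purposes of separation. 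Do the same for $V$ and $W$, producing families $\Gamma^U,\Gamma^V,\Gamma^W$ of size $\le\aleph_1$.

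Next I would verify the two hypotheses needed. Pairwise orthogonality: since $U,V,W$ are pairwise disjoint open sets, $I(U)$ and $I(V)$ are orthogonal, and likewise for the other pairs, by the fact recalled just above (``$I(U)$ and $I(V)$ are orthogonal iff $U\cap V=\emptyset$''); the generating subfamilies inherit this. Non-separation is the contrapositive of the conclusion: if $\{\Gamma^U,\Gamma^V,\Gamma^W\}$ were separated by sets $a_U,a_V,a_W$ with empty intersection mod finite, then passing to $\overline{a_U},\overline{a_V},\overline{a_W}$ in $\omega^\ast$ (three closed, hence compact, sets with empty intersection, using clause (4) of the characterization of $\beta\omega$ to handle finite errors) one would get $\overline{U}\cap\overline{V}\cap\overline{W}=\emptyset$ via the monotonicity $U\subset\overline{a_U}$ etc. So it suffices to prove the families are separated, and that is exactly the content of Theorem~\ref{aleph1} applied with $n=3>2$ to the $\aleph_1$-generated orthogonal families $\Gamma^U,\Gamma^V,\Gamma^W$ — one applies the theorem to the generating subfamilies of size $\aleph_1$, and a set separating those also separates the hereditary/generated closures since every element of $\Gamma^U$ is almost contained in one of the generators $a_\alpha$.

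The main obstacle, and the step deserving genuine care, is the passage between ``$U$ is a union of $\aleph_1$ closed sets'' and ``$I(U)$ is controlled by $\aleph_1$ many subsets of $\omega$.'' The subtlety is that $acc(x)\subset U$ does not by itself place $acc(x)$ inside any single $F_\alpha$, so one cannot literally say $\Gamma^U$ has $\aleph_1$ generators; what one needs is that for the separation argument only the generating sets matter, i.e. Theorem~\ref{aleph1} produces $a_U$ with $a_\alpha\subset^\ast a_U$ for all $\alpha$, and then for arbitrary $x\in I(U)$ one argues $x\subset^\ast a_U$ as well — but this last implication requires $U=\bigcup_\alpha F_\alpha$ to ``cover'' $acc(x)$ in a way compatible with the $a_\alpha$'s, which is where compactness of $acc(x)$ and a suitable choice of the $F_\alpha$ (e.g. a cofinal increasing tower, or a basis of the Lindelöf-type cover) must be used. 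I would handle this by first reducing to the case where the $F_\alpha$ form an increasing chain whose union is $U$ (possible since $U$ has a $\sigma$-compact-like exhaustion of length $\omega_1$ under the cardinality hypothesis) so that $acc(x)$, being compact and contained in $\bigcup F_\alpha$, actually sits inside some single $F_{\alpha(x)}$; then $x=^\ast$ a subset of $a_{\alpha(x)}$ and everything goes through. Modulo this bookkeeping the result is a direct corollary of Theorem~\ref{aleph1}, as promised by the remark that opens this section that such translations are ``simple exercises.''
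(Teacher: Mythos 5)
Your overall route is exactly the paper's intended one: the theorem is presented as the Stone-duality translation of Theorem~\ref{aleph1}, and your proof performs that translation — replace each closed piece $F_\alpha\subset U$ by a clopen set $acc(a_\alpha)$ with $F_\alpha\subset acc(a_\alpha)\subset U$, apply Theorem~\ref{aleph1} to the three pairwise orthogonal families of generators of size $\le\aleph_1$, and convert the separating sets $a_U,a_V,a_W$ back into clopen sets with empty intersection containing $\overline{U},\overline{V},\overline{W}$. That core is correct.

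Two of your justifications need repair, although neither affects the result. First, the existence of $a_\alpha$ with $F_\alpha\subset acc(a_\alpha)\subset U$ has nothing to do with $\omega^\ast$ being an F-space: it follows from compactness of $F_\alpha$ plus the fact that the sets $acc(b)$, $b\subset\omega$, form a base of clopen sets of $\omega^\ast$, so $F_\alpha$ is covered by finitely many basic clopen subsets of $U$ and one takes their union. Second, the ``main obstacle'' of your last paragraph is not an obstacle, and the device you propose to overcome it is invalid. The conclusion only needs $\overline{U}\subset acc(a_U)$, which is immediate: $U=\bigcup_\alpha acc(a_\alpha)$, each $a_\alpha\subset^\ast a_U$ gives $acc(a_\alpha)\subset acc(a_U)$, and $acc(a_U)$ is closed; likewise for $V$ and $W$, and then $acc(a_U)\cap acc(a_V)\cap acc(a_W)=acc(a_U\cap a_V\cap a_W)=\emptyset$ because the triple intersection is finite. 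No claim about arbitrary $x\in I(U)$ is required. Moreover, your proposed reduction to an increasing $\omega_1$-chain of closed sets fails: an increasing union of $\aleph_1$ closed sets need not be closed, and compactness of $acc(x)$ yields finite subcovers only for \emph{open} covers, so a compact set inside an increasing union of closed sets need not lie in any single member. If you do want the (true, but unnecessary) fact that every $x\in I(U)$ satisfies $x\subset^\ast a_U$, argue instead that $acc(x)$ is compact and covered by the clopen sets $acc(a_\alpha)$, hence by finitely many, so $x\subset^\ast a_{\alpha_1}\cup\cdots\cup a_{\alpha_k}\subset^\ast a_U$.
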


This constrasts with the fact that, by Hausdorff's construction, there exist two disjoint open sets $U,V$, each a union of $\aleph_1$ many closed sets, such that $\overline{U}\cap\overline{V} \neq\emptyset$.\\

Let us say that an open set $U\subset\omega^\ast$ is analytic if $I(U)$ is analytic. The fact that $\{I(U_i) : i<n\}$ can be $B$-broken is translated topologically into the existence of a point $x$ such that $x\in \bigcap_{i\in B}\overline{U_i}$ but $x\not\in \bigcup_{i\not\in B}\overline{U_i}$. The restriction discussed on Section~\ref{sectionbreak} on the ways that an analytic gap can be broken, translate now into certain forbidden configurations for the partial intersections of closures in a family of analytic open sets. For example, Theorem~\ref{2break} traslates as:

\begin{thm}
If $\{U_i : i<n\}$ are pairwise disjoint analytic open subsets of $\omega^\ast$, then
\begin{enumerate}
\item Either $\{\overline{U_i} : i<n\}$ are pairwise disjoint
\item Or there exists $x\in\omega^\ast$ such that $|\{i<n : x\in\overline{U_i}\}|=2$ 
\end{enumerate}
\end{thm}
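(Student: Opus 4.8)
The plan is to derive this statement as a direct translation of Theorem~\ref{2break} through Stone duality, using the dictionary established at the beginning of this section. First I would recall the correspondences already set up: a pairwise disjoint family of analytic open sets $\{U_i : i<n\}$ in $\omega^\ast$ corresponds to a family of pairwise orthogonal analytic families $\{I(U_i) : i<n\}$ of subsets of $\omega$, and the condition ``$\{I(U_i) : i<n\}$ can be $B$-broken'' is equivalent to the existence of a point $x\in\omega^\ast$ with $x\in\bigcap_{i\in B}\overline{U_i}$ and $x\notin\bigcup_{i\notin B}\overline{U_i}$. So if some $B$ with $|B|\geq 2$ breaks the configuration, then taking any two-element subset of $B$ and noting that $x$ lies in the closures indexed by $B$, we get a point lying in \emph{at least} two of the closures; the statement we want is the slightly sharper assertion of lying in \emph{exactly} two, so a short extra argument (shrink $B$) is needed.

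The key steps, in order, are as follows. Step one: reduce to the case where $\{I(U_i) : i<n\}$ is actually an $n$-gap. If it is not a gap, then either the families are separated or they fail to be an $n$-gap for the trivial reason that $\bigcap_{i<n}\overline{U_i}=\emptyset$ already — and in the separated case one checks, via the definitions of $I(U)$ and the fact that separation of $\{I(U_i)\}$ is equivalent to $\bigcap_{i<n}\overline{U_i}=\emptyset$, that alternative (1) holds: the closures $\overline{U_i}$ are pairwise disjoint. (Here one uses that $I(U_i)$ orthogonal to $I(U_j)$ already forces a lot, and that any failure of pairwise disjointness of closures would give a non-separated $2$-subgap.) Step two: in the remaining case, $\{I(U_i) : i<n\}$ is a genuine analytic $n$-gap, so Theorem~\ref{2break} applies and gives a set $B\subset n$ with $|B|=2$ such that the gap can be $B$-broken. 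Step three: translate ``$B$-broken'' back through the dictionary: there is $x\in\omega^\ast$ with $x\in\overline{U_i}$ for $i\in B$ and $x\notin\overline{U_i}$ for $i\notin B$. Since $|B|=2$ this $x$ witnesses alternative (2) on the nose: $|\{i<n : x\in\overline{U_i}\}|=2$.

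The main obstacle I expect is Step one — handling the degenerate cases cleanly. One has to be careful that ``$\{I(U_i)\}$ is an $n$-gap'' requires both non-separation and the condition that picking $x_i\in I(U_i)$ forces $\bigcap x_i=^\ast\emptyset$; the latter is automatic from pairwise orthogonality only when $n=2$, and for $n>2$ orthogonality of the pairs does give $\bigcap_{i<n}x_i=^\ast\emptyset$ since already $x_0\cap x_1=^\ast\emptyset$. So the only genuine dichotomy is: separated versus not separated, i.e. $\bigcap_i\overline{U_i}=\emptyset$ versus not. If $\bigcap_i\overline{U_i}=\emptyset$ we still need alternative (1), pairwise disjointness of closures, which does \emph{not} follow merely from $\bigcap_i\overline{U_i}=\emptyset$ when $n>2$; but in that situation some pair $\overline{U_i}\cap\overline{U_j}\neq\emptyset$ would furnish a point in exactly — or at least — two closures, and a shrinking argument again lands us in alternative (2). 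Thus the clean way to phrase Step one is: if some pair of closures meets, pick a point in a minimal such intersection to get exactly two; otherwise the closures are pairwise disjoint and we are in case (1); and if all pairs are disjoint but we are not yet done, the gap is genuine and Theorem~\ref{2break} finishes it. Everything else is routine bookkeeping with the duality dictionary, so no further calculation is warranted here.
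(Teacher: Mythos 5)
Your overall plan -- Stone duality plus Theorem~\ref{2break} -- is exactly the route the paper intends, and your Steps two and three are fine in the case $\bigcap_{i<n}\overline{U_i}\neq\emptyset$, i.e.\ when $\{I(U_i) : i<n\}$ really is an analytic $n$-gap. The genuine gap is the remaining case: some pair of closures meets but the full family is separated ($\bigcap_{i<n}\overline{U_i}=\emptyset$), so Theorem~\ref{2break} does not apply to it. Your first version of Step one claims that separation of the whole family yields alternative (1); as you yourself note later, this is false for $n>2$. But your repair is not an argument either: ``pick a point in a minimal such intersection to get exactly two'' proves nothing, since no minimality over pairs prevents every point of $\overline{U_i}\cap\overline{U_j}$ from lying in some third closure $\overline{U_k}$ -- ruling that out is precisely the nontrivial content of the statement -- and the $2$-subgap $\{I(U_i),I(U_j)\}$ you invoke gives nothing when fed to Theorem~\ref{2break}, because for a $2$-gap being $\{i,j\}$-broken is vacuous. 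Your closing recipe is also internally inconsistent: the clause ``if all pairs are disjoint but we are not yet done, the gap is genuine'' cannot occur, since pairwise disjoint closures already force separation.

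The missing idea is a localization before applying Theorem~\ref{2break}. If alternative (1) fails, pick $x$ lying in at least two closures and set $A=\{i<n : x\in\overline{U_i}\}$, so $|A|\geq 2$. Since $x\notin\overline{U_k}$ for the finitely many $k\notin A$, choose a clopen neighbourhood $V=acc(M)$ of $x$ with $V\cap\overline{U_k}=\emptyset$ for all $k\notin A$. Inside $V\cong\omega^\ast$ the sets $U_i\cap V$, $i\in A$, are pairwise disjoint analytic open sets (note $I(U_i)|_M=I(U_i)\cap\mathcal{P}(M)$ is analytic), and $x\in\bigcap_{i\in A}\overline{U_i\cap V}$ because $V$ is an open neighbourhood of $x$; hence $\{I(U_i)|_M : i\in A\}$ is an analytic $|A|$-gap. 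Applying Theorem~\ref{2break} to this restricted gap and translating back through the dictionary of Section~\ref{sectionomegastar} gives $B\subset A$ with $|B|=2$ and a point $y\in V$ with $y\in\bigcap_{i\in B}\overline{U_i\cap V}\subset\bigcap_{i\in B}\overline{U_i}$ and $y\notin\overline{U_k\cap V}$ for $k\in A\setminus B$; since $V$ is clopen, $\overline{U_k}\cap V=\overline{U_k\cap V}$, so in fact $y\notin\overline{U_k}$ for $k\in A\setminus B$, and $y\in V$ also misses every $\overline{U_k}$ with $k\notin A$. Thus $|\{i<n : y\in\overline{U_i}\}|=2$, which is alternative (2). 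With this localization inserted, your argument is correct and coincides with the translation exercise the paper has in mind.
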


Just for fun, let us state the version of Theorem~\ref{Jbreak} for $k=3$. The number $J(3) = 61$ is an optimal bound.

\begin{thm}
If $\{U_i : i<n\}$ are pairwise disjoint analytic open subsets of $\omega^\ast$ such that $\bigcap_{i<3}\overline{U_i} \neq\emptyset$, then there exists $x\in\bigcap_{i<3}\overline{U_i}$ such that $|\{i<n : x\in\overline{U_i}\}|\leq 61$
\end{thm}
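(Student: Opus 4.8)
The plan is to deduce this directly from Theorem~\ref{Jbreak} by translating it through Stone duality, exactly as the paragraph preceding the statement promises. First I would set up the dictionary: given pairwise disjoint analytic open sets $U_i\subset\omega^\ast$ for $i<n$, put $\Gamma_i = I(U_i)$. By the remarks in this section, disjointness of the $U_i$ gives that the families $\Gamma_i$ are pairwise orthogonal, and the hypothesis $\bigcap_{i<3}\overline{U_i}\neq\emptyset$ says precisely that $\{\Gamma_i : i<3\}$ is not separated; since the $U_i$ are pairwise disjoint we also have $\bigcap_{i<3}G_i=\emptyset$ at the level of the ideals, so $\{\Gamma_i : i<3\}$ is in fact an analytic $3_\ast$-subgap of the analytic $n$-gap $\Gamma = \{\Gamma_i : i<n\}$ (after closing under subsets, which is the harmless convention noted in Section~\ref{sectiondefinitions}). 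One should check at this point that $\Gamma$ really is an $n$-gap and not merely that its first three coordinates fail to be separated: this needs $\bigcap_{i<n}\overline{U_i}$ to be a subset of $\bigcap_{i<3}\overline{U_i}$, which is immediate, so $\{\Gamma_i:i<n\}$ is not separated, orthogonality is the disjointness of the $U_i$, and hereditariness is by convention.

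Next I would apply Theorem~\ref{Jbreak} with $B=\{0,1,2\}=3$. It produces a set $C$ with $3\subset C\subset n$, $|C|\leq J(3)=61$, such that $\Gamma$ can be $C$-broken. By definition of $C$-broken there is an infinite $M\subset\omega$ with $\{\Gamma_i|_M : i\in C\}$ a gap and $M\in\Gamma_i^\perp$ for $i\notin C$. Now translate back: the fact that $\{\Gamma_i|_M : i\in C\}$ is not separated means $\bigcap_{i\in C}\overline{U_i\cap M^\ast}\neq\emptyset$, where I am writing $M^\ast=\overline{M}\cap\omega^\ast$, so in particular $\bigcap_{i\in C}\overline{U_i}\neq\emptyset$; pick a point $x$ in this intersection that lies in $M^\ast$ (which is possible precisely because the restricted families to $M$ still form a gap). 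The orthogonality $M\in\Gamma_i^\perp$ for $i\notin C$ translates to $M^\ast\cap U_i=\emptyset$, hence $M^\ast\cap\overline{U_i}$ has empty interior — more to the point, one gets $x\notin\overline{U_i}$ for $i\notin C$ for a suitably chosen $x$, using that $M^\ast$ and $\overline{U_i}$ can be separated by a clopen set when $M\in\Gamma_i^\perp$. Therefore $\{i<n : x\in\overline{U_i}\}\subset C$, giving $|\{i<n : x\in\overline{U_i}\}|\leq 61$, and since $x\in\overline{U_i}$ for all $i<3$ we have $x\in\bigcap_{i<3}\overline{U_i}$ as required.

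The main obstacle, and the only place the argument is not entirely mechanical, is the step where one passes from "$M\in\Gamma_i^\perp$" to "$x\notin\overline{U_i}$" for the chosen witness $x\in M^\ast$, rather than merely to "$U_i\cap M^\ast=\emptyset$". The subtlety is that a point $x$ in $M^\ast$ could still lie in $\overline{U_i}$ without $U_i$ meeting $M^\ast$, if $x$ is a boundary point. The resolution is to be careful about which point one extracts: because $\{\Gamma_i|_M : i\in C\}$ is a genuine gap on $M$, one chooses $x$ in the interior of $M^\ast$ relative to the closure configuration, or equivalently one works throughout inside the clopen set $M^\ast$ and inside the open sets $U_i\cap M^\ast$; then $\overline{U_i}\cap M^\ast = \overline{U_i\cap M^\ast}^{\,M^\ast}$ for $i\in C$, while for $i\notin C$ one has $U_i\cap M^\ast=\emptyset$ so that $\overline{U_i\cap M^\ast}=\emptyset$ inside $M^\ast$, and hence any $x$ in the nonempty set $\bigcap_{i\in C}\overline{U_i\cap M^\ast}^{\,M^\ast}$ automatically avoids $\overline{U_i}$ for $i\notin C$. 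Spelling out this clopen-relativization is exactly the "simple exercise of translation through Stone duality" that the section advertises, and optimality of $J(3)=61$ is inherited verbatim from the optimality of $J$ in Theorem~\ref{Jbreak}, realized by the $n$-gap built from the $61$ types of $3^{<\omega}$.
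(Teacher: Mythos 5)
Your overall strategy is exactly the one the paper intends (it offers no proof beyond calling this a ``simple exercise of translation through Stone duality'' of Theorem~\ref{Jbreak}), and the purely topological part of your translation is correct: since $M^\ast=\overline{M}\cap\omega^\ast$ is \emph{clopen}, a clopen set disjoint from the open set $U_i$ is disjoint from $\overline{U_i}$, so any $x\in M^\ast\cap\bigcap_{i\in C}\overline{U_i}$, which exists because $\{\Gamma_i|_M:i\in C\}$ is unseparated, satisfies $\{i<n:x\in\overline{U_i}\}\subset C$. The genuine gap is earlier, where you claim that $\{\Gamma_i:i<n\}$ is an analytic $n$-gap: your justification (``$\bigcap_{i<n}\overline{U_i}\subset\bigcap_{i<3}\overline{U_i}$, which is immediate, so $\{\Gamma_i:i<n\}$ is not separated'') runs the implication backwards. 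The hypothesis $\bigcap_{i<3}\overline{U_i}\neq\emptyset$ only gives non-separation of the first three families; it is entirely possible that $\bigcap_{i<n}\overline{U_i}=\emptyset$ (say $n=4$ with $\overline{U_3}$ disjoint from $\overline{U_0}$), and then by the dictionary of this section the full family $\{\Gamma_i:i<n\}$ \emph{is} separated, hence not an $n$-gap, and Theorem~\ref{Jbreak} as stated simply does not apply. This is not cosmetic: the whole point of the statement you are proving is that only three closures are assumed to meet.

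The repair is to notice that the argument sketched in the paper for Theorem~\ref{Jbreak} never uses non-separation of the whole family, only that $\{\Gamma_i:i\in B\}$ is an unseparated analytic family while all the $\Gamma_k$, $k<n$, are analytic, hereditary and pairwise orthogonal. Concretely: apply Theorem~\ref{criticalgap} to the analytic $3_\ast$-gap $\{I(U_i):i<3\}$ to get $u:3^{<\omega}\To\omega$ sending $[i]$-chains into $\Gamma_i$ (after a permutation), then use Theorem~\ref{weakRamsey} to stabilize, for each of the $J(3)=61$ types $\tau\in\mathfrak{T}_3$ and each $k<n$, whether $u$-images of $\tau$-sets lie in $\Gamma_k$; hereditariness of $\Gamma_k$ plus the fact that every infinite subset of a set of type $\tau$ is again of type $\tau$ shows that the image $M=u(3^{<\omega})$ is orthogonal to every $\Gamma_k$ with $k\notin C:=\{k: \text{some type is sent into }\Gamma_k\}$, while $\{\Gamma_i|_M:i<3\}$ is still unseparated and $3\subset C$, $|C|\leq 61$. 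From there your clopen-set translation finishes the proof exactly as you wrote it. So either rerun that argument directly on the families $I(U_i)$, or restate Theorem~\ref{Jbreak} with this weaker hypothesis; as written, your appeal to it is outside its stated scope.
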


\section{Selective coideals and analytic almost disjoint families}\label{sectionalmostdisjoint}

An family $\mathcal{A}$ of infinite subsets of the countable set $N$ is said to be an almost disjoint family if $a\cap b$ is finite for every $a,b\in\mathcal{A}$, $a\neq b$. An example of a closed almost disjoint family is the set $\mathcal{B}$ of all branches of the dyadic tree $2^{<\omega}$. That is, $\mathcal{B} = \{b_x : x\in 2^\omega\}$ where $b_x = \{(x_0,x_1,\ldots,x_{k-1}) : k<\omega\}$ if $x=(x_0,x_1,\ldots)$. The following theorem asserts that every uncountable analytic almost disjoint family contains a copy of the family $\mathcal{B}$ inside.

\begin{thm}\label{almostdisjoint}
If $\mathcal{A}$ is an uncountable analytic almost disjoint family of subsets of a countable set $N$, then there exists two injective functions $u:2^{<\omega}\To N$ and $a:2^\omega\To \mathcal{A}$ such that
\begin{enumerate}
\item $u(b_x) \subset a(x)$ for each $x\in 2^\omega$,
\item $u(c)\in \mathcal{A}^\perp$ whenever $c\in \mathcal{B}^\perp$.
\end{enumerate}
\end{thm}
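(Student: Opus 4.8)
The plan is to read the conclusion, via Definition~\ref{gaporder}, as (essentially) the gap–reduction $\{\mathcal B^\perp,\mathcal B\}\le\{\mathcal A^\perp,\mathcal A\}$, where $\mathcal B$ now denotes the infinitely hereditary closure of the branches of $2^{<\omega}$ (that is, the infinite chains) and $\mathcal A$ its own infinitely hereditary closure (the infinite subsets of members of the given family). A witnessing injection $u\colon 2^{<\omega}\To N$ would send each chain, hence each branch $b_x$, into a member of $\mathcal A$; that member $a(x)$ is automatically unique, because an infinite set is contained in at most one element of an almost disjoint family; and $u$ would send $\mathcal B^\perp$ (the chainless sets) into $\mathcal A^\perp$, which is condition~(2). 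The only thing the order $\le$ does not supply is injectivity of $x\mapsto a(x)$ (it is clearly total on $2^\omega$), and I would get this by also arranging the trace of $\mathcal A$ on $\mathrm{range}(u)$ to be \emph{canonical}: $u^{-1}(a)$ finite or almost contained in a single branch for every $a\in\mathcal A$. Then $u^{-1}(a(x))=^\ast b_x$, so $a$ is injective, and condition~(2) is immediate again (for chainless $c$ the set $c\cap u^{-1}(a)$ is a chain inside $c$ or a finite set, hence finite, so $u(c)\cap a$ is finite). So the problem becomes: construct an injection $u\colon 2^{<\omega}\To N$ with every chain mapped into a member of $\mathcal A$ and every member of $\mathcal A$ pulled back to a finite set or, almost, a branch.

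For the first requirement I would invoke the critical–gap dichotomy. First one checks that, $\mathcal A$ being uncountable and analytic, $\mathcal A$ is not countably generated in $(\mathcal A^\perp)^\perp$: a countable witness $\{d_n\}\subseteq(\mathcal A^\perp)^\perp$ would, by the perfect set property for the analytic set $\mathcal A$, have some $d_n$ containing uncountably many members of $\mathcal A$, and then the definition of $(\mathcal A^\perp)^\perp$ forces $\{a\cap d_n:a\in\mathcal A\}$ to be a maximal almost disjoint family on $d_n$ — an uncountable analytic maximal almost disjoint family, which does not exist by Mathias's theorem. Hence Theorem~\ref{firstdichotomy} applies to $\{\mathcal A^\perp,\mathcal A\}$ with $\mathcal A$ on the analytic, non–countably–generated side, and yields an injection $u_0\colon 2^{<\omega}\To N$ with $u_0(\mathcal C^2_1)$ lying in members of $\mathcal A$ and $u_0(\mathcal C^2_0)\subseteq\mathcal A^\perp$. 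Precomposing $u_0$ with the tree embedding $(s_0,\dots,s_k)\mapsto(1,s_0,1,s_1,\dots,1,s_k)$ — under which every chain of $2^{<\omega}$ becomes a $[1]$-chain and antichains go to antichains — I may assume that $u_0$ sends \emph{every} chain of $2^{<\omega}$ into a member of $\mathcal A$.

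For the canonical trace I would run the record combinatorics of Section~\ref{sectionweak}, exactly as Theorem~\ref{standardbelowgap} is deduced from Theorem~\ref{criticalgap} and Theorem~\ref{weakRamsey}. For each of the finitely many types $\tau$ of $2^{<\omega}$ I colour the type-$\tau$ subsets $X$ by data about which member of $\mathcal A$ absorbs $u_0(X)$ — not by the member itself, but by Baire–measurable comparisons: whether $u_0(X)$ is absorbed at all, and, for the parts of a configuration of a fixed joint type, whether their images are absorbed by the same member — and apply Theorem~\ref{weakRamsey} finitely many times to obtain $T\sim 2^{<\omega}$ on which all these data depend only on the type. The chain types are absorbed, by the previous paragraph. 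I then argue that a member of $\mathcal A$ cannot absorb chains from two distinct branches of $T$: if it could, the relevant comparison colouring would be constant with value `same member' on $T$, and propagating it along all branches would force a single member of $\mathcal A$ to contain all of $u_0(T)$, so that the infinite set $u_0(T)$ is simultaneously almost contained in a member of $\mathcal A$ and an element of $\mathcal A^\perp$ — absurd. This gives injectivity of $a$, and the same mechanism rules out any antichain type being absorbed into a member that also meets a neighbouring branch; so $u:=u_0\restriction T$ has the canonical trace and the proof is complete.

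The main obstacle is this last step. The dichotomy controls only chains — and even forces half of them into $\mathcal A^\perp$ — so the behaviour of $u_0$ on antichains is a priori uncontrolled, and the canonical form of the trace of $\mathcal A$ has to be squeezed out by hand, through colourings rich enough to pin down the `absorbing member' relationships yet coarse enough to remain finite and to feed into the tree–Ramsey theorem. Two features make this the technical heart and explain why the section is phrased in terms of selective coideals: $\mathcal A^\perp$ is only co-analytic, so one must keep the analytic family $\mathcal A$ on the side one controls throughout and read off $\mathcal A^\perp$-membership by complementation once the colourings have been stabilised; and the diagonalisation is against all — possibly $2^{\aleph_0}$ many — members of $\mathcal A$ at once, which is carried out not one member at a time but through the complete Ramsey property of the analytic set $\mathcal A$ relative to a selective coideal on $N$ manufactured from a perfect almost disjoint subfamily.
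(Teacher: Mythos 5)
Your skeleton is the right one, and it matches the paper's: consider the gap $\{\mathcal{A}^\perp,\mathcal{A}\}$, apply Theorem~\ref{firstdichotomy}, bring in the record combinatorics of Section~\ref{sectionweak} via Theorem~\ref{weakRamsey}, and use the interleaving map $w(s_0,\ldots,s_k)=(1,s_0,1,s_1,\ldots,1,s_k)$; your verification of the hypothesis of Theorem~\ref{firstdichotomy} (no analytic MAD family, by Mathias, applied to the traces on a generator $d_n$) is correct and is exactly the role Mathias plays here. The problem is the order in which you use these ingredients, and it is not an innocuous reordering. You precompose with $w$ \emph{first}, so as to have all chains absorbed, and only then run Theorem~\ref{weakRamsey} with colourings of the form ``absorbed at all'' and ``absorbed by the same member''. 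But the range of $w$ contains no $[0]$-chains (every increment of a chain in the range passes through an inserted $1$), so after this precomposition you have discarded the entire $\mathcal{A}^\perp$-side of the dichotomy: there is no longer any set whose image is known to lie in $\mathcal{A}^\perp$. Consequently the contradiction you invoke at the crucial step --- ``a single member of $\mathcal{A}$ would contain all of $u_0(T)$, so $u_0(T)$ is almost inside a member and also in $\mathcal{A}^\perp$, absurd'' --- has no justification: nothing forces $u_0(T)$, or any subset of it, into $\mathcal{A}^\perp$. Indeed, nothing you retain after precomposing with $w$ rules out the scenario in which the whole composed range sits inside one single member $a^\ast\in\mathcal{A}$ (the dichotomy only constrains $[1]$-chains to be absorbed by \emph{some} member and $[0]$-chains, which no longer exist in the range, to be orthogonal). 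In that scenario every one of your colourings is constant (``absorbed'', ``same member''), Theorem~\ref{weakRamsey} returns the trivial homogeneity, no contradiction is available, and yet both the injectivity of $a$ and condition (2) fail for your map; so the ``canonical trace'' cannot be squeezed out by the mechanism you describe.

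The paper's proof avoids exactly this trap by homogenizing \emph{before} composing with $w$: Theorem~\ref{weakRamsey} is applied on the full tree, where $[0]\in S_0$ survives as an anchor on the orthogonal side, yielding a reduction $\{\Gamma_{S_0},\Gamma_{S_1}\}\leq\{\mathcal{A}^\perp,\mathcal{A}\}$ in which every type is either absorbed or sent into $\mathcal{A}^\perp$; then the almost-disjointness of $\mathcal{A}$ is used to show that the dangerous antichain types (the top-comb types) cannot lie on the absorbed side --- an argument that plays sets of absorbed type against sets of a type already known to be orthogonal, and hence needs that anchor; and only \emph{then} is $w$ composed, so that chainless sets realize only excluded types and condition (2) (and, with a perfect-set refinement, injectivity of $a$) follows. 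Your ``same member'' propagation idea is in the right spirit --- it is how one shows a fixed member cannot absorb too much --- but without the prior type-homogenization on the full tree there is nothing for the propagation to collide with, and this is precisely the step your proposal is missing.
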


A more general version of Theorem~\ref{almostdisjoint} can be stated in terms of so-called selective coideals and is provided in \cite{analyticmultigaps}. The proof combines results of Mathias~\cite{Mathias} with the theory explained in Section~\ref{sectionweak}. The idea is to consider the gap $\{\mathcal{A}^\perp,\mathcal{A}\}$, to which we can apply Theorem~\ref{firstdichotomy}. Combining it with Theorem~\ref{weakRamsey}, we find $\{\Gamma_{S_0},\Gamma_{S_1}\}\leq \{\mathcal{A}^\perp,\mathcal{A}\}$ where $S_i$ are sets of types with $[i]\in S_i$. The special structure of the almost disjoint family rules out any top-comb type to belong to $\mathcal{A}$. Composing with the function $w(s_0,s_1,\ldots,s_k) = (1,s_0,1,s_1,\ldots1,s_k)$ one obtains the desired function $u$.

\section{Analytic multiple gaps and sequences in Banach spaces}\label{sectionsequences}

Gaps can be a tool to analyze how different classes of subsequences \emph{get mixed} inside a sequence $(x_n)_{n<\omega}$. We shall illustrate this way of thinking by considering sequences of vectors in normed spaces and certain classes of subsequences. We could, for instance, fix a number $p\in [1,\infty)$ and consider the sequences of vectors which are equivalent to the canonical basis of $\ell_p$, that is:

\begin{dfn}
Let $1\leq p <\infty$. We say that a sequence of vectors $(x_n)_{n<\omega}$ in a normed space is called an $\ell_p$-sequence if there exists $C>0$ such that for every $n_1,\ldots,n_k<\omega$ and every scalars $\lambda_1,\ldots,\lambda_p$
$$ (\clubsuit) \ \frac{1}{C} \cdot \left(\sum_{i=1}^k |\lambda_i|^p \right)^{\frac{1}{p}} \leq \left\|\sum_{i=1}^k \lambda_i x_{n_i}\right\|  \leq C\cdot \left(\sum_{i=1}^k |\lambda_i|^p \right)^{\frac{1}{p}}$$
\end{dfn}

Let us consider now finite sets $\vec{p} \subset [1,+\infty)$

\begin{dfn}
A sequence $(x_n)_{n<\omega}$ of vectors in a normed space is called $\vec{p}$-saturated, if for every subsequence $(x_n)_{n\in A}$ there exists a further subsequence $(x_n)_{n\in B}$, $B\subset A$, which is an $\ell_{p}$-sequence for some $p\in\vec{p}$.
\end{dfn}

If we are given a $\{p_i: i<n\}$-saturated sequence, we can consider the following families of subsets of $\omega$:
$$\Gamma_i = \left\{A\subset \omega : \{x_m\}_{m\in A}\text{ is an }\ell_{p_i}\text{-sequence}\right\}$$
These families are mutually orthogonal, because an infinite sequence cannot be an $\ell_p$-sequence and an $\ell_q$-sequence at the same time, for $p\neq q$. Moreover, the families $\Gamma_i$ are analytic, because $\Gamma_i$ can be written as a countable union of countable intersection of closed families as
$$\bigcup_{C\in\omega\setminus\{0\}}\bigcap_{n_1,\ldots,n_k<\omega}\bigcap_{\lambda_1,\ldots,\lambda_k\in\mathbb{Q}}\left\{A: \{n_1,\ldots,n_k\}\not\subset A, \text{ or }(\clubsuit)\text{ holds for }p=p_i \right\}$$

The fact that $\{\Gamma_i : i<n\}$ form an $n$-gap is equivalent to $(x_n)_{n<\omega}$ being a $\vec{p}$-sequence, according to the following definition:

\begin{dfn}
A sequence $(x_n)_{n<\omega}$ of vectors in a normed space is called a $\vec{p}$-sequence, if it is $\vec{p}$-saturated and we cannot find any finite decomposition $\omega = \bigcup_{i<k}A_i$ so that each $(x_n)_{n\in A_i}$ is $\vec{q}_i$-saturated for some $\vec{q}_i\subsetneq \vec{p}$.
\end{dfn}

It is not really more restrictive to study $\vec{p}$-sequences than $\vec{p}$-saturated sequences, because every $\vec{p}$-saturated sequence can be decomposed into a finite union of subsequences, each of which is a $\vec{q}$-sequence for some $\vec{q}\subset\vec{p}$. We should think of a $\vec{p}$-sequence as a sequence $\{x_m\}_{m<\omega}$ of vectors which contains many subsequences equivalent to $\ell_p$, for $p\in\vec{p}$, and moreover all these subsequences are \emph{well mixed} inside the sequence $\{x_m\}_{m<\omega}$.\\

As we explained above, each $\vec{p}$-sequence gives rise to an analytic $n$-gap $\{\Gamma_i : i<n\}$ which is moreover dense. Therefore, all the theory of analytic $n$-gaps that we have developed can be applied in this context. For each fixed $n$, we know that there are only a finite number of minimal canonical forms in which a $\vec{p}$-sequence can be produced with $|\vec{p}|=n$, that correspond to the minimal analytic $n$-gaps which are dense. The fact that the gap $\{\Gamma_i : i<n\}$ can be $B$-broken is equivalent to saying that the sequence $(x_m)_{m<\omega}$ contains a $\{p_i : i\in B\}$-sequence. Thus, given a $\vec{p}$-sequence, the problem of knowing for which $\vec{q}\subset \vec{p}$ does $(x_m)_{m<\omega}$ contain a $\vec{q}$-sequence, happens to be a nontrivial problem related to the discussion of Section~\ref{sectionbreak}. Just as an example, this is a corollary of Theorem~\ref{2break}:

\begin{thm}
If $|\vec{p}|\geq 2$, then every $\vec{p}$-sequence has a subsequence which is a $\vec{q}$-sequence for some $\vec{q}\subset \vec{p}$ with $|\vec{q}|=2$.
\end{thm}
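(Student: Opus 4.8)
The plan is to deduce this statement directly from Theorem~\ref{2break} together with the dictionary, established in Section~\ref{sectionsequences}, between $\vec{p}$-sequences and dense analytic $n$-gaps. First I would set $n = |\vec{p}|$, write $\vec{p} = \{p_i : i<n\}$, and form the associated families $\Gamma_i = \{A\subset\omega : (x_m)_{m\in A}\text{ is an }\ell_{p_i}\text{-sequence}\}$. As recalled in the excerpt, the hypothesis that $(x_n)_{n<\omega}$ is a $\vec{p}$-sequence is exactly the assertion that $\Gamma = \{\Gamma_i : i<n\}$ is an $n$-gap (it is automatically analytic and dense, for the reasons given above). Since $n \geq 2$, Theorem~\ref{2break} applies and yields a set $B\subset n$ with $|B|=2$ such that $\Gamma$ can be $B$-broken.

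Next I would unwind what $B$-broken means in this concrete setting. By definition there is an infinite $M\subset\omega$ such that $\{\Gamma_i|_M : i\in B\}$ is a gap while $M\in\Gamma_i^\perp$ for $i\notin B$. The first condition says, passing to the subsequence $(x_m)_{m\in M}$, that the two families $\{A\subset M : (x_m)_{m\in A}\text{ is an }\ell_{p_i}\text{-sequence}\}$, $i\in B$, form a $2$-gap; but that is precisely the statement that $(x_m)_{m\in M}$ is a $\vec{q}$-sequence for $\vec{q} = \{p_i : i\in B\}\subset\vec{p}$, $|\vec{q}|=2$. Thus the desired subsequence is $(x_m)_{m\in M}$.

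The one point that needs a small remark rather than a literal citation is the orthogonality clause: being $B$-broken only guarantees $M\in\Gamma_i^\perp$ for $i\notin B$, which rephrased says that $(x_m)_{m\in M}$ contains no infinite $\ell_{p_i}$-subsequence for $i\notin B$. This is what one needs to know that $\vec q$ cannot be shrunk further in a spurious way: combined with the fact that $(x_m)_{m\in M}$ is $\vec p$-saturated (a property inherited by any subsequence), every subsequence of $(x_m)_{m\in M}$ has a further $\ell_{p_i}$-subsequence with $i\in B$, so $(x_m)_{m\in M}$ is in fact $\vec q$-saturated; and the $2$-gap property then upgrades ``$\vec q$-saturated'' to ``$\vec q$-sequence'' exactly as in the definitions of Section~\ref{sectionsequences}.

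I expect no serious obstacle here: the entire content is in Theorem~\ref{2break}, and the rest is translation through the correspondence already spelled out. The only thing to be careful about is bookkeeping — making sure that the $n$-gap attached to $(x_n)$ is genuinely a gap (so that Theorem~\ref{2break} is applicable) and that restriction to $M$ interacts with the families $\Gamma_i$ in the obvious way, i.e. $(\Gamma_i)|_M$ is the family attached to the subsequence $(x_m)_{m\in M}$, which is immediate from the definitions.
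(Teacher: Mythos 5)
Your proposal is correct and follows exactly the paper's route: the paper derives this statement as a corollary of Theorem~\ref{2break} via the stated dictionary (the $n$-gap $\{\Gamma_i\}$ attached to a $\vec{p}$-sequence, and the equivalence of $B$-breaking with containing a $\{p_i: i\in B\}$-subsequence). Your extra remark on using $M\in\Gamma_i^\perp$ for $i\notin B$ together with inherited $\vec{p}$-saturation to get $\vec{q}$-saturation is precisely the unwinding the paper leaves implicit, so there is nothing to add.
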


The tricky thing is that $\vec{q}$ cannot be chosen arbitrarily. We know that there exist dense analytic gaps $\{\Delta_i : i<n\}$ made opf types which cannot be broken for certain $B\subset n$, $|B|=2$. For ecample, if $\Gamma_{\{\tau\}}$ is the family of all subsets of $2^{<\omega}$ of type $\tau$, then $\{\Gamma_{\{\tau\}} : \tau\in\mathfrak{T}_2\}$ cannot be $\{[0],[1]\}$-broken. Out of that, one gets $\vec{q}\subset\vec{p}$ with $|\vec{q}|=2$, for which it is possible to construct a $\vec{p}$-sequence which does not contain any $\vec{q}$-subsequence.

\section{Gaps, selectors, and spaces of continuous functions}\label{sectionCK}

Our original motivation to introduce multiple gaps was some problems concerning operators on the space $C(\omega^\ast)$ of continuous functions on $\omega^\ast$, better known under the name $\ell_\infty/c_0$ among Banach space theorists. We briefly explain these problems in this section. We tried to make it very much self-contained, still we refer to \cite{AlbiacKalton} for some basics on Banach spaces, and to \cite{Semadeni} as a classical reference on Banach spaces of continuous functions.

\begin{dfn}
Let $X\subset Y$ be Banach spaces, and $\lambda\geq 1$. We say that $X$ is $\lambda$-complemented in $Y$ if there exists a linear operator $P:Y\To X$ such that
\begin{enumerate}
\item $P(x) = x$ for all $x\in X$,
\item $\|P(x)\| \leq \lambda \|x\|$ for all $x\in Y$.
\end{enumerate}
\end{dfn}

We are interested only in a particular class of Banach spaces, those of the form $C(K)$. Given a compact space $K$, the space $C(K)$ is the Banach space of real-valued continuous functions on $K$, endowed with the norm $\|f\| = \max\{|f(x)| : x\in K\}$. The assignment $K\mapsto C(K)$ is a contravariant functor, in the sense that each continuous map $f:L\To K$ corresponds to a composition operator $f^0:C(K)\To C(L)$ given by $f^0(\phi) = \phi\circ f$. If $f$ is onto, then $f^0$ is an injective isometry that identifies $C(K)$ as a subspace of $C(L)$.

Suppose that we are given a continuous surjection $f:L\To K$, and we view $C(K)$ as a subspace of $C(L)$ via the operator $f^0$. How can we know, just studying $f$ in topological terms, whether the space $C(K)$ is $\lambda$-complemented in $C(L)$? This situation was analysed by Milutin~\cite{Mil}, and furhter by Pe\l czy\'{n}ski~\cite{Pelcz} and Ditor~\cite{Ditor}. The first observation is that if $f$ has a continuous selector $s:K\To L$, then $C(K)$ is 1-complemented in $C(L)$ by the projection $P(\phi) = \phi\circ s$. The necessary and sufficient condition is that $f$ has a generalized continuous selector which takes measures in $L$ as values, instead of just points. Namely, consider $M_\lambda(L)$ the set of all regular Borel measures on $L$ of variation less than or equal to $\lambda$, endowed with the coarsest topology that makes the map $\mu \mapsto \int_L \phi d\mu$ continuous for each $\phi\in C(L)$. Then,

\begin{prop}\label{averaging}
The space $C(K)$ is $\lambda$-complemented in $C(L)$ if and only if there exists a continuous function $s:K\To M_\lambda(L)$ such that $$\int_L (\phi\circ f) ds(x) = \phi(x)$$ for all $x\in K$ and $\phi\in C(K)$. 
\end{prop}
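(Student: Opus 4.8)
The plan is to establish both directions by translating between projections $P : C(L) \to C(K)$ and weak$^\ast$-measurable averaging maps $s : K \to M_\lambda(L)$, using the Riesz representation theorem to identify functionals on $C(L)$ with measures. First I would prove the easy direction: given a continuous $s : K \to M_\lambda(L)$ satisfying $\int_L (\phi\circ f)\, ds(x) = \phi(x)$ for all $\phi \in C(K)$, define $P : C(L) \to C(K)$ by $(P\psi)(x) = \int_L \psi\, ds(x)$. Continuity of $x \mapsto s(x)$ in the prescribed topology on $M_\lambda(L)$ is exactly what makes $P\psi$ a continuous function on $K$ for each $\psi \in C(L)$; linearity of $P$ is clear; the bound $\|P\psi\| \le \lambda\|\psi\|$ follows from $\|s(x)\| \le \lambda$ and $|\int_L \psi\, d\mu| \le \|\mu\|\,\|\psi\|$; and the hypothesis says precisely that $P(\phi\circ f) = \phi$, i.e. $P$ restricts to the identity on the copy of $C(K)$ sitting inside $C(L)$ via $f^0$. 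Hence $C(K)$ is $\lambda$-complemented.

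For the converse, suppose $P : C(L) \to C(K)$ is a projection with $\|P\| \le \lambda$ and $P(f^0\phi) = \phi$ for all $\phi \in C(K)$. For each fixed $x \in K$, the map $\psi \mapsto (P\psi)(x)$ is a bounded linear functional on $C(L)$ of norm at most $\lambda$, so by the Riesz representation theorem there is a unique regular Borel measure $s(x) \in M_\lambda(L)$ with $(P\psi)(x) = \int_L \psi\, ds(x)$. This defines $s : K \to M_\lambda(L)$. The identity $\int_L (\phi\circ f)\, ds(x) = (P(f^0\phi))(x) = \phi(x)$ is immediate from $P \circ f^0 = \mathrm{id}$. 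It remains to check that $s$ is continuous when $M_\lambda(L)$ carries the coarsest topology making $\mu \mapsto \int_L \psi\, d\mu$ continuous for every $\psi \in C(L)$: but by definition of that topology, $s$ is continuous if and only if $x \mapsto \int_L \psi\, ds(x) = (P\psi)(x)$ is continuous on $K$ for each $\psi \in C(L)$, and $P\psi \in C(K)$ by construction. So $s$ is the desired averaging selector.

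I do not anticipate a serious obstacle here: the statement is essentially a packaging of the Riesz representation theorem together with the definition of the weak$^\ast$-type topology on $M_\lambda(L)$, and both halves of the equivalence are formal once one notices that the continuity requirement on $s$ has been set up to match exactly the continuity of the functions $P\psi$. The only point requiring a little care is the regularity and uniqueness of the representing measures $s(x)$ — one should invoke the Riesz–Markov–Kakutani theorem in the form valid for compact Hausdorff spaces, which gives a genuine regular Borel measure of the correct total variation — and the observation that the variation norm of $s(x)$ equals the functional norm $\|\psi \mapsto (P\psi)(x)\| \le \|P\| \le \lambda$, placing $s(x)$ in $M_\lambda(L)$. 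Everything else is bookkeeping with the identifications $C(K) \hookrightarrow C(L)$ and $M_\lambda(L) \subset C(L)^\ast$.
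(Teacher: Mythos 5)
Your argument is correct and follows essentially the same route the paper indicates: the selector $s$ induces the projection $P(\varphi)(x)=\int_L\varphi\,ds(x)$, and conversely each projection yields a selector via the Riesz representation of the functionals $\psi\mapsto(P\psi)(x)$, with the weak$^\ast$-type topology on $M_\lambda(L)$ chosen exactly so that continuity of $s$ matches continuity of the functions $P\psi$. The only bookkeeping point, which you handle adequately, is the isometric identification of $C(K)$ with $f^0(C(K))\subset C(L)$ when passing between $P:C(L)\To C(K)$ and a genuine projection onto the subspace.
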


The projection induced by the generalized selector $s$ is given by $P(\varphi)(x) = \int_L \varphi ds(x)$. The fact that all possible projections are induced by generalized selections follows from the Riesz representation theorem, that states that the dual space $C(L)^\ast$ is identified with the regular Borel measures of finite variation on $L$. We refer to \cite{Semadeni} where all these things are explained in detail.\\

What does all this have to do with gaps? Consider a dense 2-gap $\Gamma = \{\Gamma_0,\Gamma_1\}$ on $\omega$. We already know that it is easy to construct such a thing, we just have to distribute the eight types of the dyadic tree into $\Gamma_0$ and $\Gamma_1$. We can associate two open sets of $\omega^\ast$, $U_i = \bigcup\{acc(a) : a\in \Gamma_i\}$. Because $\{\Gamma_0,\Gamma_1\}$ is a gap, this translates into the fact that $U_0\cap U_1 = \emptyset$ but $\overline{U_0}\cap \overline{U_1} \neq \emptyset$. The density implies that $\overline{U_0}\cup \overline{U_1} = \omega^\ast$. Now let $K=\omega^\ast$, and the \emph{splitted space} $L = \overline{U_0}\times\{0\} \cup \overline{U_1}\times\{1\}$. We have the \emph{gluing} continuous surjection $f:L\To \omega^\ast$, given by $f(x,i) = x$. We are in the situation described above, and using Proposition~\ref{averaging} we can show that $C(\omega^\ast)$ is not 1-complemented in $C(L)$. Namely, suppose that $s:\omega^\ast\To M_1(L)$ was a generalized selection. For $x\in U_i$ we have no other choice than taking $s(x) = \delta_{(x,i)}$, the Dirac measure concentrated in the poing $(x,i)$. But then, taking $y\in\overline{U_0}\cap \overline{U_1}$, the continuity of $s$ would imply that $s(y)$ should be both equal to $\delta_{(y,0)}$ and $\delta_{(y,1)}$, a contradiction. In this way we proved that the existence a dense 2-gap implies:

\begin{prop}
There is a superspace in which $C(\omega^\ast)$ is not 1-complemented. 
\end{prop}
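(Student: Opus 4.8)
The plan is to make precise the sketch already given in the paragraph preceding the statement, using Proposition~\ref{averaging} as the criterion for $1$-complementation. First I would fix a dense $2$-gap $\Gamma=\{\Gamma_0,\Gamma_1\}$ on $\omega$; such a gap exists by the discussion in Section~\ref{sectionbreak} (e.g. distribute the eight types of the dyadic tree between $\Gamma_0$ and $\Gamma_1$, making sure each type lands somewhere so that the result is dense, and that $[0]\in\Gamma_0$, $[1]\in\Gamma_1$ so that it is a genuine gap). Then I set $U_i=\bigcup\{acc(a):a\in\Gamma_i\}$, an open subset of $K:=\omega^\ast$. The translation through Stone duality recalled in Section~\ref{sectionomegastar} gives exactly the three facts I need: orthogonality of $\Gamma_0,\Gamma_1$ says $U_0\cap U_1=\emptyset$; non-separation says $\overline{U_0}\cap\overline{U_1}\neq\emptyset$; and density of the gap says $\overline{U_0}\cup\overline{U_1}=\omega^\ast$.

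Next I would build the superspace. Put $L=\overline{U_0}\times\{0\}\cup\overline{U_1}\times\{1\}$ as a closed subspace of $\omega^\ast\times\{0,1\}$ (hence compact), and let $f:L\To\omega^\ast$ be $f(x,i)=x$. Since $\overline{U_0}\cup\overline{U_1}=\omega^\ast$, the map $f$ is a continuous surjection, so $f^0:C(\omega^\ast)\To C(L)$ is an isometric embedding and we may regard $C(\omega^\ast)\subset C(L)$. It remains to show $C(\omega^\ast)$ is not $1$-complemented in $C(L)$. By Proposition~\ref{averaging}, a $1$-complementation would yield a continuous map $s:\omega^\ast\To M_1(L)$ with $\int_L(\phi\circ f)\,ds(x)=\phi(x)$ for all $x$ and all $\phi\in C(\omega^\ast)$.

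The crux is to pin down $s$ on $U_0\cup U_1$ and derive a contradiction at a point of $\overline{U_0}\cap\overline{U_1}$. Fix $x\in U_0$ (the case $x\in U_1$ is symmetric). The fibre $f^{-1}(x)$ is a single point, namely $(x,0)$, because $x\notin U_1$ and hence $x\notin\overline{U_1}$? — here I must be slightly careful: $x\in U_0$ does give $(x,0)\in L$, but $x$ could a priori also lie in $\overline{U_1}$. The right statement is that $U_0$ is open and disjoint from $U_1$, but not necessarily from $\overline{U_1}$; so I instead work on the open set $U_0\setminus\overline{U_1}$, which is still dense in $\overline{U_0}$ (since $U_0$ is dense in $\overline{U_0}$ and $U_0\cap\overline{U_1}$ has empty interior in $\overline{U_0}$ because $U_0\cap U_1=\emptyset$ and $U_1$ is dense in $\overline{U_1}$). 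For $x$ in this dense open set the fibre $f^{-1}(x)=\{(x,0)\}$ is a singleton, so the selector property forces $s(x)=\delta_{(x,0)}$: indeed $s(x)$ is a measure of variation $\le 1$ supported on $f^{-1}(x)=\{(x,0)\}$ whose pushforward integrates $\phi\circ f$ to $\phi(x)$, and the only such measure is $\delta_{(x,0)}$. Symmetrically $s(y)=\delta_{(y,1)}$ for $y$ in a dense open subset of $\overline{U_1}$. Now pick a point $z\in\overline{U_0}\cap\overline{U_1}$. Approaching $z$ through the first dense open set and using continuity of $s$ (into $M_1(L)$ with the weak$^\ast$-type topology), we get $s(z)=\delta_{(z,0)}$; approaching through the second we get $s(z)=\delta_{(z,1)}$. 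These two Dirac measures are distinct elements of $M_1(L)$ — they are separated by the continuous function $g(x,i)=i$ on $L$, since $\int g\,d\delta_{(z,0)}=0\neq 1=\int g\,d\delta_{(z,1)}$ — a contradiction. Hence no such $s$ exists, so $C(\omega^\ast)$ is not $1$-complemented in $C(L)$, and $L$ is the desired superspace.

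The step I expect to be the main obstacle, or at least the one requiring genuine care rather than bookkeeping, is the density argument showing that $U_0\setminus\overline{U_1}$ is dense in $\overline{U_0}$ (and symmetrically), which is what licenses identifying $s$ with a Dirac measure on a dense set and then passing to the limit; everything else is a routine unwinding of Stone duality and the Riesz representation theorem via Proposition~\ref{averaging}.
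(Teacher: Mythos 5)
Your proof follows exactly the paper's intended argument (dense $2$-gap, the split space $L$, Proposition~\ref{averaging}, and the clash of Dirac measures at a point of $\overline{U_0}\cap\overline{U_1}$), and is correct; your extra care in passing to the set $U_0\setminus\overline{U_1}$, where the fibres of $f$ really are singletons, only makes precise a point the paper's sketch glosses over, and your density claim for that set does hold.
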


This is just a consequence of Goodner-Nachbin's characterization \cite{Goodner,Nachbin} of 1-injective Banach spaces of continuous functions (those which are 1-complemented in every superspace), cf. \cite{AlbiacKalton}. Indeed the connection between gaps and 1-injectivity is deeper than what is suggested at first sight by the above arguments. For example, the lack of countably generated gaps stated in Proposition~\ref{separationofcountable} has the following consequence, cf.~\cite{extr}:

\begin{prop}
$C(\omega^\ast)$ is 1-complemented in every superspace $Y$ for which $Y/C(\omega^\ast)$ is separable.
\end{prop}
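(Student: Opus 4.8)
The plan is to prove a statement slightly more general: if $Y$ is a Banach space containing $X = C(\omega^\ast)$ as a closed subspace with $Y/X$ separable, then there is a norm-one projection $P\colon Y\To X$. The strategy is to reduce to a lifting problem and then exploit Proposition~\ref{separationofcountable} --- the absence of countably generated gaps --- exactly as advertised. First I would recall that $X = \ell_\infty/c_0$ is a $1$-injective Banach space relative to certain quotients: more precisely, by Goodner--Nachbin, $C(\omega^\ast)$ is isometric to a space $C(K)$ with $K$ extremally disconnected, hence $1$-injective, so it \emph{is} $1$-complemented in every superspace. Wait --- that would make the statement trivial, so the point must be that we are working with a \emph{weaker} hypothesis than $1$-injectivity would need, or the ambient claim is that $\ell_\infty/c_0$ is \emph{not} $1$-injective (it is only $\lambda$-injective for larger $\lambda$), and separability of the quotient is what buys back the constant $1$.

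So the correct plan: view the short exact sequence $0\to X\to Y\to Y/X\to 0$ with $Y/X$ separable, and seek a norm-one right inverse on the level of the dual, i.e. a weak$^\ast$-continuous norm-one projection. Equivalently, by Proposition~\ref{averaging}-type reasoning one wants an averaging operator. The key step is: a norm-one projection fails to exist only if one can build, from a hypothetical sequence of ``approximate'' projections or from the separable quotient's unit ball $\{y_n\}_{n<\omega}$, a pair of orthogonal families of subsets of $\omega$ that cannot be separated --- that is, a $2$-gap. Concretely, for each $n$ the element $y_n\in Y$ determines (via its coset and a Hahn--Banach-type splitting over $c_0$) two subsets $a_n, b_n\subset\omega$ recording the ``positive'' and ``negative'' oscillation of a representative in $\ell_\infty$; obstruction to a simultaneous norm-one lifting of \emph{all} the $y_n$ is precisely non-separation of $\{a_n : n<\omega\}$ and $\{b_n : n<\omega\}$. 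But these are \emph{countable} families, so Proposition~\ref{separationofcountable} forces them to be separated, which yields the common refinement needed to assemble a genuine norm-one projection on the closed span, and then on all of $Y$ by a density/completeness argument.

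In more detail the steps are: (i) reduce to the case where $Y$ is generated by $X$ together with a norm-bounded sequence $(y_n)$; (ii) for each finite stage, use the hypothesis $Y/X$ separable plus the extremal disconnectedness of $\omega^\ast$ to produce a norm-one partial lifting, encoded by a finite piece of data about subsets of $\omega$; (iii) observe that the only way these finite liftings fail to cohere into a single norm-one lifting is that two associated countable families of subsets of $\omega$ are orthogonal but not separated, i.e.\ form a countable $2$-gap; (iv) invoke Proposition~\ref{separationofcountable} to rule this out, obtaining separating sets $a,b\subset\omega$; (v) use $a,b$ to define the projection $P$ explicitly (on the dual side, as a weak$^\ast$-continuous averaging, then transpose), and check $\|P\|=1$ and $P|_X=\mathrm{id}$; (vi) extend by continuity to all of $Y$.

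The main obstacle I expect is step (iii): making precise the dictionary between ``coherent norm-one liftings of a countable bounded set'' and ``separation of a countably generated pair of orthogonal families,'' since one must verify that the only obstruction is genuinely a gap-type obstruction and not, say, a more subtle finitely-additive-measure phenomenon on $\omega^\ast$. This requires carefully choosing the representatives in $\ell_\infty$ so that the relevant sets $a_n,b_n$ are honestly orthogonal modulo finite --- which is where the hypothesis that the quotient is \emph{separable} (rather than just, say, of density $\aleph_1$) is essential, because it keeps the families countable and hence in the scope of Proposition~\ref{separationofcountable}; the contrast with Hausdorff's gap (Theorem~\ref{Hausdorff}) shows this cannot be pushed to density $\aleph_1$. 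Everything else --- the reductions and the final assembly of $P$ --- is routine Banach-space bookkeeping once the gap-theoretic heart is in place.
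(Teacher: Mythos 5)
Your outline points in the right general direction---the combinatorial heart is indeed the countable separation property of Proposition~\ref{separationofcountable}, and the paper itself offers no proof but refers to \cite{extr}---but as written the argument has both a factual error and a genuine missing step. The factual error: $\omega^\ast$ is \emph{not} extremally disconnected and $C(\omega^\ast)$ is neither $1$-injective nor $\lambda$-injective for any $\lambda$ (that is Amir's theorem \cite{Amir}, quoted later in the same section of the paper); already Hausdorff's gap (Theorem~\ref{Hausdorff}) produces two disjoint open subsets of $\omega^\ast$ with intersecting closures, so extremal disconnectedness fails in ZFC. Hence both your opening Goodner--Nachbin detour and the appeal to ``extremal disconnectedness of $\omega^\ast$'' in step (ii) are unavailable; what $\omega^\ast$ does have is the F-space/countable interpolation property, which is exactly the Stone-dual content of Proposition~\ref{separationofcountable}, and that weaker property is what the proof must run on.

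The missing step is your (iii)--(v). It is not true, and certainly not proved in your sketch, that the only obstruction to a coherent norm-one lifting is the non-separation of a \emph{single} pair of countable orthogonal families $\{a_n\},\{b_n\}\subset\mathcal{P}(\omega)$, nor can one pair of separating sets $a,b$ encode a projection $P:Y\To C(\omega^\ast)$. The standard route (the one carried out in \cite{extr}) is: enumerate a dense sequence of the separable quotient, lift it to $(y_n)\subset Y$, and extend the identity of $X=C(\omega^\ast)$ one vector at a time with norm one; extending to $y$ requires a point in the intersection of countably many pairwise intersecting closed balls $B\bigl(P(x_k),\|y-x_k\|\bigr)$ in $C(\omega^\ast)$, i.e.\ a continuous $h$ on $\omega^\ast$ with $f_n\leq h\leq g_m$ for countably many continuous functions satisfying $f_n\leq g_m$. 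This quantitative, function-level interpolation on $\omega^\ast$ is the real content; it can be derived from the countable gap-separation phenomenon (for instance by separating the relevant orthogonal families at countably many rational levels and diagonalizing, in the spirit of the proof of Proposition~\ref{separationofcountable}), but that derivation, together with the closing limit argument over the increasing chain of domains, is precisely the work your steps (iii)--(vi) assert rather than supply. Until the dictionary ``ball-intersection problem $\leftrightarrow$ countable interpolation $\leftrightarrow$ separation of countable orthogonal families'' is made explicit and proved, the proposal is an outline of the known strategy, not a proof.
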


Now, what if we want to get a superspace in which $C(\omega^\ast)$ is not complemented at all (that is, it is not $\lambda$-complemented for any $\lambda\geq 1$). The previous argument to kill all 1-projections does not work for large $\lambda$. For instance, if we are allowed to take a selection $s:K\To M_3(L)$, then we are not anymore obliged to take $s(x) = \delta_{(x,0)}$ for $x\in U_0$. We could take $s(x) = \delta_{(x,0)} - \delta_{(y,0)} + \delta_{(y,1)}$ where $y\in \overline{U}_0\cap\overline{U}_1$. Indeed, if we imagine that $\overline{U}_0\cap\overline{U}_1 = \{y\}$, then this does give a generalized selector, together with $s(x) = \delta_{(x,1)}$ for $x\not\in U_0$. We need to have at least $\lambda\geq 2$ to play this kind of trick. But we may need even larger $\lambda$ if we want to apply the trick twice or more times, if we have many intersections of closures to deal with. Namely, if we have now a dense $n$-gap $\Gamma = \{\Gamma_i : i<n\}$, and associated open sets $\{U_i : i<n\}$, we can consider again the splitted space $L = \bigcup_{i<n}\overline{U_i}\times\{i\}$ and the continuous gluing surjection $f:L\To\omega^\ast$ given by $f(x,i)=x$, and we have that

\begin{prop}\label{jigsawditor}
If $\Gamma$ is a dense $n$-jigsaw, and $n=2^m$, then $C(\omega^\ast)$ is not $m$-complemented in $C(L)$.
\end{prop}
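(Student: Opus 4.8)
The plan is to assume the negation and reach a contradiction through Proposition~\ref{averaging}. Reindex the $n=2^m$ families by binary strings of length $m$, so $\Gamma=\{\Gamma_\sigma:\sigma\in 2^m\}$, with associated open sets $U_\sigma=\bigcup\{acc(a):a\in\Gamma_\sigma\}\subseteq\omega^\ast$; since $\Gamma$ is a gap these are pairwise disjoint and $\bigcap_\sigma\overline{U_\sigma}\neq\emptyset$, and since $\Gamma$ is dense $\bigcup_\sigma\overline{U_\sigma}=\omega^\ast$. If $C(\omega^\ast)$ were $m$-complemented in $C(L)$, Proposition~\ref{averaging} would supply a continuous map $s:\omega^\ast\To M_m(L)$ with $f_\ast s(x)=\delta_x$ for every $x$ (the identity $\int_L(\phi\circ f)\,ds(x)=\phi(x)$ for all $\phi\in C(\omega^\ast)$ is exactly this push-forward condition). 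The aim is to show no such $s$ exists.

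First I would record the two ``rigidity'' facts that the jigsaw hypothesis gives after Stone-dualizing, as in Sections~\ref{sectionbreak} and~\ref{sectionomegastar}. Restricting $\Gamma$ to a set $M\subseteq\omega$ that witnesses a $B$-break produces, on the clopen piece $\widehat M\cong\omega^\ast$, a configuration in which the families indexed outside $B$ have simply vanished ($\overline{U_\sigma}\cap\widehat M=\emptyset$ for $\sigma\notin B$), while the remaining ones form again a dense $|B|$-gap that is still a jigsaw: density survives because an infinite subset of $M$ cannot lie in $\Gamma_\sigma$ for $\sigma\notin B$ (as $M\in\Gamma_\sigma^\perp$), and the jigsaw property survives because its definition quantifies over all index sets and all subsets. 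Also, grouping the $n$ families into two blocks of size $2^{m-1}$ yields a dense $2$-gap, since a separation of the two blocks would separate all $n$ families at once. Iterating the $B$-breaking along a balanced binary tree, I obtain a tree $\{\widehat M_t:t\in 2^{\le m-1}\}$ of nested clopen subsets of $\omega^\ast$ with $\widehat M_{t0}\cap\widehat M_{t1}=\emptyset$, such that $\widehat M_t\cap\overline{U_\sigma}\neq\emptyset$ precisely when $t\preceq\sigma$, on each $\widehat M_t$ the surviving families form a dense $2^{m-|t|}$-jigsaw, and its split is exactly the clopen piece $f^{-1}(\widehat M_t)\subseteq L$. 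In particular each node carries a genuine branch point $p_t\in\bigcap\{\overline{U_\sigma}:t\preceq\sigma\}\setminus\bigcup\{\overline{U_\sigma}:t\not\preceq\sigma\}$.

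The heart of the proof is an inductive lower bound on the total variation $s$ is forced to spend along a branch of this tree. The single-leaf mechanism is the one already used in the $m=1$ case discussed just before this Proposition: over the open set $U_\sigma$ one has $f^{-1}(x)=\{(x,\sigma)\}$, so the balance condition $f_\ast s(x)=\delta_x$ forbids $s(x)$ to carry mass off $(x,\sigma)$ over the point $x$; hence $s(x)=\delta_{(x,\sigma)}+c(x)$ with $f_\ast c(x)=0$ and $c(x)(\{(x,\sigma)\})=0$, and by continuity $s(p)=\delta_{(p,\sigma)}+c^\sigma$ at a branch point $p$ approached from within $U_\sigma$. Comparing the value of $s$ at a single branch point obtained by approaching from different sheets, and tracking that each correction part $c^\sigma$ is pinned, through continuity, to live in the part of $L$ lying over the clopen neighbourhood along which the approach is made, one argues by induction on the height of the tree that the restriction of $s$ to $f^{-1}(\widehat M_t)$ cannot be a valid selector of variation $\le m-|t|$; descending from the root this forces $\|s(p_\emptyset)\|\ge m+1>m$, a contradiction.

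The balanced binary splitting is exactly what makes the exponent come out right: $m$ independent levels of branching cost $m$ units of variation beyond the base cost $1$, and this is why the statement is phrased for $n=2^m$ rather than a general $n$. This bookkeeping --- showing that the correction measures installed at different levels of the tree cannot be amortized against one another --- is the main obstacle, and is the genuinely delicate point of the argument; it parallels the classical averaging lower bounds of Ditor and Haydon, the novelty being that the jigsaw hypothesis is precisely what allows one to install a full balanced binary tree of honest multi-point intersections inside an arbitrary (not necessarily analytic) dense $n$-gap. Once the contradiction is reached, we conclude that $C(\omega^\ast)$ is not $m$-complemented in $C(L)$.
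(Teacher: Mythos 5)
Your overall frame is the right one, and it is the same frame the paper itself suggests: assume $m$-complementation, invoke Proposition~\ref{averaging} to get a weak*-continuous selector $s:\omega^\ast\To M_m(L)$ with $f_\ast s(x)=\delta_x$, and use the jigsaw property, Stone-dualized as a rich configuration of intersections of closures, to force variation larger than $m$. But note that the paper does not actually prove this proposition; it explicitly defers to \cite[Section 8]{multiplegaps} and to Ditor's results on averaging operators and lower semicontinuous sections, and only offers the heuristic that ``many intersections of closures require more and more norm.'' Your write-up reproduces that heuristic but does not supply the missing content: the inductive step in which each level of branching costs one additional unit of variation is asserted, and you yourself flag it as ``the main obstacle.'' That step \emph{is} the proposition, so as it stands there is a genuine gap.

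Moreover, the specific scaffolding you chose makes the intended induction unworkable. Your pieces $\widehat M_{t0},\widehat M_{t1}$ are disjoint \emph{clopen} subsets of $\widehat M_t$, hence closed; and your branch point $p_t$ lies in $\overline{U_\sigma}$ for all $\sigma\succeq t$, which (since $\widehat M_{ti}\cap\overline{U_\sigma}=\emptyset$ for $\sigma\not\succeq ti$) forces $p_t\notin\widehat M_{t0}\cup\widehat M_{t1}$. So $p_t$ cannot be approached by any net from the children pieces, and no continuity argument can transfer the cost incurred inside $f^{-1}(\widehat M_{t0})$ or $f^{-1}(\widehat M_{t1})$ to the value $s(p_t)$; ``descending from the root'' therefore yields nothing about $\|s(p_\emptyset)\|$. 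There is a second analytic obstruction even at points that are honest limits: for signed measures, variation on a (cl)open set is only weak*-lower semicontinuous in the direction that allows limits to \emph{lose} mass, so corrections installed along approaches from different sheets may cancel or coincide in the limit, and ``costs add up'' is exactly what must be proved rather than assumed. The cited proof avoids both problems by working with the strata $\bigcap_{i\in A}\overline{U_i}\setminus\bigcup_{i\notin A}\overline{U_i}$ of the original gap (the jigsaw property says each point of an $A$-stratum is a limit of points of every $B$-stratum, $B\subset A$) and by quoting Ditor's quantitative results on averaging operators, instead of restricting to a tree of pairwise disjoint clopen sets. To repair your argument you would need to formulate and prove a lower-semicontinuity/accumulation lemma of that type; without it the induction does not close.
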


Being a jigsaw is translated topologically into the fact that every point $x\in\bigcap_{i\in A}\overline{U_i}\setminus\bigcap_{i\not\in A}\overline{U_i}$ belongs to the closure of $\bigcap_{i\in B}\overline{U_i}\setminus\bigcap_{i\not\in B}\overline{U_i}$ whenever $B\subset A$. These are the \emph{many intersections of closures} mentioned above, that require more and more norm on the measures to be able to get a generalized continuous selector. We will not explain the details of this here, we refer to \cite[Section 8]{multiplegaps}, and the results of \cite{Ditor} cited there. We have already mentioned in Section~\ref{sectionbreak} that there are natural examples of analytic dense jigsaws to provide inputs for Proposition~\ref{jigsawditor}. It is an easy excercise (look at the last paragraph of the paper \cite{multiplegaps}) to glue together all the $C(L)$'s given by Proposition~\ref{jigsawditor} into a single one:

\begin{thm}
There is a superspace $C(\omega^\ast)\subset C(L)$ in which $C(\omega^\ast)$ is not $\lambda$-complemented for any $\lambda$. 
\end{thm}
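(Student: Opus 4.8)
The plan is to assemble a single superspace by combining the family of superspaces produced by Proposition~\ref{jigsawditor}. First I would recall, from Section~\ref{sectionbreak}, that there are analytic dense $n$-jigsaws for every $n$; in particular, for each $m$ take the dense analytic $2^m$-jigsaw $\Gamma^{(m)} = \{\Gamma^{(m)}_i : i<2^m\}$ built from types in $2^{<\omega}$ (for instance the one where $\Gamma^{(m)}_i$ is the family of sets of type $\tau$ with $\max(\tau)$ in a fixed block, or any concrete dense jigsaw on $\omega$). Applying Proposition~\ref{jigsawditor} to $\Gamma^{(m)}$ yields, for each $m<\omega$, a compact space $L_m$ with a continuous surjection $f_m:L_m\To\omega^\ast$, hence an isometric embedding $f_m^0:C(\omega^\ast)\To C(L_m)$, such that $C(\omega^\ast)$ is not $m$-complemented in $C(L_m)$.

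Next I would glue these together. Form the disjoint union $L = \bigsqcup_{m<\omega} L_m$ suitably compactified so that all the $f_m$'s agree over $\omega^\ast$: concretely, take the closed subspace $L \subset \prod_{m<\omega} L_m$ (or rather the ``amalgamated'' space) consisting of points whose coordinates have a common image under the $f_m$, i.e. the fibered product $L = \{(\ell_m)_m : f_m(\ell_m) = f_{m'}(\ell_{m'}) \text{ for all } m,m'\}$, with the single gluing map $f:L\To\omega^\ast$, $f((\ell_m)_m) = f_0(\ell_0)$. This is a continuous surjection of a compact space onto $\omega^\ast$, so $f^0$ embeds $C(\omega^\ast)$ isometrically into $C(L)$. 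For each fixed $m$, the coordinate projection $L\To L_m$ is a continuous surjection commuting with the maps to $\omega^\ast$, so $C(L_m)$ embeds isometrically into $C(L)$ over $C(\omega^\ast)$. Then any bounded projection $P:C(L)\To C(\omega^\ast)$ of norm $\lambda$ would restrict to a projection $C(L_m)\To C(\omega^\ast)$ of norm at most $\lambda$ for every $m$; choosing $m>\lambda$ contradicts Proposition~\ref{jigsawditor}. Hence $C(\omega^\ast)$ is not $\lambda$-complemented in $C(L)$ for any $\lambda\geq 1$.

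The one point that needs care — and which I expect to be the main (though not deep) obstacle — is setting up the amalgamated compact space $L$ and checking that, through Stone/Riesz duality, the restriction of a projection on $C(L)$ to each subspace $C(L_m)$ really is a projection onto $C(\omega^\ast)$ of no larger norm. This is where one uses that $C(L_m)\subset C(L)$ and $C(\omega^\ast)\subset C(L_m)$ via the commuting surjections, so that $P|_{C(L_m)}$ maps into $C(\omega^\ast)$, fixes $C(\omega^\ast)$ pointwise, and has norm $\leq\|P\|$. Once that diagram of isometric embeddings is in place the conclusion is immediate from the unboundedness of the constants in Proposition~\ref{jigsawditor}. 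The construction of $L$ and the verification are routine — indeed, as remarked in the text, this gluing is essentially the argument sketched in the last paragraph of~\cite{multiplegaps} — so I would keep this step brief and cite~\cite{multiplegaps, Ditor} for the details of the duality computations.
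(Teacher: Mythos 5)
Your proposal is correct and takes essentially the same route as the paper, which only sketches the step as ``glue together all the $C(L)$'s given by Proposition~\ref{jigsawditor}''; your fibered product of the spaces $L_m$ over $\omega^\ast$, together with the observation that a norm-$\lambda$ projection on $C(L)$ restricts (via the commuting isometric embeddings) to a norm-$\le\lambda$ projection of $C(L_m)$ onto $C(\omega^\ast)$ for every $m$, is exactly that gluing. One small correction: the concrete dense $2^m$-jigsaw should be taken as in Section~\ref{sectionbreak}, namely in $(2^m)^{<\omega}$ with $\Gamma_i$ the sets of type $\tau$ with $\max(\tau)=i$ (in $2^{<\omega}$ the value $\max(\tau)$ only takes two values, so ``blocks'' of it cannot yield $2^m$ families), but since the paper supplies these examples this does not affect your argument.
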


This result was originally proved by Amir~\cite{Amir}. Both Amir's and our approach allow to obtain a space $C(L)$ of the cardinality of the continuum. Our original (unsuccessful) intention when introducing multiple gaps was trying to apply these ideas to deal with the following problem posed in \cite{extr}: Is there a superspace $Y\supset C(\omega^\ast)$ in which $C(\omega^\ast)$ is not complemented and $Y/C(\omega^\ast)$ has density $\aleph_1$? Of course, we mean if this is provable in ZFC alone. If we say 1-complemented instead of complemented then the answer is: yes, and the space $Y$ can be obtained out of a Haudorff's gap like in Theorem~\ref{Hausdorff}. However, we cannot hope multiple gaps to provide examples of that kind, because of Theorem~\ref{aleph1}.

\end{document}